\theoremstyle{plain}
\newtheorem{thm}{Theorem}[section]
\newtheorem{lemm}[thm]{Lemma}
\newtheorem{prop}[thm]{Proposition}
\newtheorem{cor}[thm]{Corollary}
\theoremstyle{definition}
\newtheorem{rem}[thm]{Remark}
\begin{document}
\title[The 2D dissipative dispersive quasi-geostrophic equation]
{Global solutions to the dissipative quasi-geostrophic equation with dispersive forcing}
\author[Mikihiro Fujii]{Mikihiro Fujii}
\address[]{Graduate School of Mathematics Kyushu University,Fukuoka 819--0395, JAPAN}
\email{ma218005@math.kyushu-u.ac.jp}
\keywords{the 2D dissipative dispersive quasi-geostrophic equations, global regularity, dispersive estimates, energy estimates}
\subjclass[2010]{35Q35, 76B03}
\begin{abstract}
	We consider the initial value problem for the 2D quasi-geostrophic equation with weak dissipation term $\kappa(-\Delta)^{\alpha/2}\theta\ (0<\alpha\leqslant 1)$ and dispersive forcing term $Au_2$. We establish  a unique global solution for a given initial data $\theta_0$ which belongs to the scaling subcritical Sobolev space $H^s(\mathbb{R}^2)\ (s>2-\alpha)$ if the size of dispersion parameter is sufficiently large. This phenomenon is so-called the global regularity. We also obtain the relationship between the initial data and the dispersion parameter, which ensures the existence of the global solution. Moreover, we show the global regularity in the scaling critical Sobolev space $H^{2-\alpha}(\mathbb{R}^2)$ and find that the size of dispersion parameter to ensure the global existence is determined by each subset $K\subset H^{2-\alpha}(\mathbb{R}^2)$, which is precompact in some homogeneous Sobolev spaces.
\end{abstract}
\maketitle
\section{Introduction}
We consider the initial value problem for the 2D dissipative dispersive quasi-geostrophic equation:
\begin{equation}\label{1.1}
    \begin{cases}
      \partial_t \theta+\kappa(-\Delta)^{\frac{\alpha}{2}}\theta+u\cdot \nabla\theta+Au_2=0 \quad  & \quad t>0, x\in \mathbb{R}^2,\\
      u=\mathcal{R}^{\perp}\theta=(-\mathcal{R}_2\theta,\mathcal{R}_1\theta) \quad & \quad t>0, x\in \mathbb{R}^2,\\
      \theta(0,x)=\theta_0(x) \quad & \quad x\in \mathbb{R}^2,
    \end{cases}
\end{equation}
where $\theta=\theta(t,x)$ and $u=(u_1(t,x),u_2(t,x))$ are the unknown potential temperature and the velocity field of the fluid, respectively. $\theta_0=\theta_0(x)$ is the given initial potential temperature. A real constant $A\in \mathbb{R}\setminus\{0\}$ and $\kappa>0$ represents the dispersion parameter and the dissipative coefficient, respectively.
The operators $(-\Delta)^{\frac{\alpha}{2}}\ (\alpha>0)$ and $\mathcal{R}_k\ (k=1,2)$ denote nonlocal differential operators so-called the fractional Laplacian and the Riesz transforms on $\mathbb{R}^2$, respectively and they are defined by
\begin{equation*}
	\quad (-\Delta)^{\frac{\alpha}{2}}f=\mathscr{F}^{-1}|\xi|^{\alpha}\mathscr{F}f,\quad
  \mathcal{R}_kf=\partial_{x_k}(-\Delta)^{-1/2}f=\mathscr{F}^{-1}i\xi_k|\xi|^{-1}\mathscr{F}f.
\end{equation*}
If $\theta$ and $A$ satisfy (\ref{1.1}), then for any $\lambda>0$,
\begin{equation}\label{1.1.1}
  	\theta_{\lambda}(t,x):=\lambda^{\alpha-1}\theta(\lambda^{\alpha}t,\lambda x),\quad
    A_{\lambda}:=\lambda^{2\alpha-1}A
\end{equation}
also satisfy (\ref{1.1}). Since $\|\theta_{\lambda}(0,\cdot)\|_{\dot{H}^{2-\alpha}}=\|\theta(0,\cdot)\|_{\dot{H}^{2-\alpha}}$ holds
for all $\lambda>0$, $\dot{H}^{2-\alpha}(\mathbb{R}^2)$ is the critical Sobolev space with respect to the scaling (\ref{1.1.1}).

In this manuscript, we prove the existence and uniqueness of the global solution to (\ref{1.1}) for given initial data $\theta_0\in H^s(\mathbb{R}^2)$ with $s\geqslant 2-\alpha$ if the size of the dispersion parameter is sufficiently large.
This phenomenon is so-called the global regularity.
In particular, for the subcritical case $s>2-\alpha$, we obtain the relationship between $\theta_0$ and $|A|$ which ensures the global existence of the solution.
On the other hand, in the critical case $s=2-\alpha$, we show that the lower bound of the size of dispersion parameter to obtain the global solution is determined by each subset $K\subset H^{2-\alpha}(\mathbb{R}^2)$ which is precompact in $\dot{H}^{2-\alpha}(\mathbb{R}^2)\cap \dot{H}^{1-\alpha}(\mathbb{R}^2)$.

Before we state our main results precisely, we recall some known results for the equation (\ref{1.1}).
First, we consider the non-dispersive case $A=0$. There are many literatures on this case. In the case of $1<\alpha\leqslant 2$, Constantin-Wu \cite{CW} and Carrio-Ferreira \cite{CF} proved the existence of the global solution for $\theta_0\in L^2(\mathbb{R}^2)$ and $\theta_0\in L^{\frac{2}{\alpha-1}}(\mathbb{R}^2)$, respectively.
In the case of $\alpha=1$, Zhang \cite{Zhang} proved the global well-posedness for the small initial data $\theta_0\in \dot{B}^{2/p}_{p,1}(\mathbb{R}^2)\ (1\leqslant p\leqslant \infty)$.
Chen-Zhang \cite{CZ} also proved the global in time existence for the small initial data $\theta_0$ in the Tribel-Lizorkin space $F_{p,q}^s(\mathbb{R}^2)\ (s>2/p,\ 1<p,q<\infty)$.
In the case of $0<\alpha<1$,
the local and global well-posedness results in the critical Sobolev spaces and Besov spaces
are considered by \cite{CC,CL,CMZ2, Miura,Wu1}.
In particular, Miura \cite{Miura} considered the case $\kappa=1$ and proved the existence of a unique local solution for all $\theta_0\in H^{2-\alpha}(\mathbb{R}^2)$.
In \cite{Miura}, it is also shown that there exists $\delta=\delta(\alpha)>0$ such that if $\theta_0\in H^{2-\alpha}(\mathbb{R}^2)$ satisfies $\|\theta_0\|_{\dot{H}^{2-\alpha}}\leqslant \delta$, then (\ref{1.1}) with $A=0$ possesses a unique global solution in the class (\ref{1.2}) below.
Chen-Miao-Zhang \cite{CMZ2} proved the existence of a unique local solution for $\theta_0\in B_{p,q}^{2/p+1-\alpha}(\mathbb{R}^2)\ (2\leqslant p<\infty, 1\leqslant q<\infty)$ and the existence of a unique global solution if the initial data satisfies $\|\theta_0\|_{\dot{B}_{p,q}^{2/p+1-\alpha}}\leqslant \kappa \delta$ for some small $\delta>0$.

We next focus on the dispersive case $A\neq 0$. In the inviscid case $\kappa=0$, the sharp decay estimate for the  corresponding linear dispersive propagator
\begin{equation*}
	{e^{-tA\mathcal{R}_1}}f(x):=\frac{1}{(2\pi)^2}\int_{\mathbb{R}^2}e^{i\xi\cdot x-tA\frac{i\xi_1}{|\xi|}}\widehat{f}(\xi)\ d\xi
\end{equation*}
is given by \cite{EW,WC,Takada2018}.
Using the dispersive estimate, Elgindi-Widmayer \cite{EW} proved that for the initial data $\theta_0$ satisfying $\|\theta_0\|_{H^{4+\delta}}$, $\|\theta_0\|_{W^{3+\mu,1}}\leqslant \varepsilon\ (\delta,\mu>0)$,
there exists a unique solution $\theta$ to (\ref{1.1}) with $\kappa=0$ and $A=1$, in the class $\theta\in C([0,T];H^{4+\delta}(\mathbb{R}^2))$ with $T\sim \varepsilon^{-\frac{4}{3}}$.
Wan-Chen \cite{WC} proved the long-time solvability for (\ref{1.1}) with $\kappa=0$ in $H^{s+1}(\mathbb
{R}^2)\ (s>2)$,
which means that for any $\theta_0\in H^{s+1}(\mathbb{R}^2)\ (s>2)$, a unique solution exists on an arbitrary finite time interval $[0,T]$ if the size of dispersion parameter is sufficiently large.
The author \cite{Fujii} relaxed the smoothness conditions and proved the long-time solvability in $H^s(\mathbb{R}^2)\ (s>2)$, which is the same regularity as the local existence results for (\ref{1.1}) with $\kappa=0$, and it is also proved in \cite{Fujii} that the long-time solution converges to the corresponding linear solution $e^{-AtR_1}\theta_0$ as $|A|\to \infty$ in $C([0,T];H^{s-1}(\mathbb{R}^2))$ with convergence rate $O(|A|^{-\frac{1}{q}})\ (4\leqslant q\leqslant \infty)$.
Cannone-Miao-Xue \cite{CMX} considered the dissipative dispersive case $\kappa,A>0$ and proved the global regularity for (\ref{1.1}) in the critical Sobolev space $H^{2-\alpha}(\mathbb{R}^2)$ with $0<\alpha<1$.
More precisely, for any $\theta_0 \in H^{2-\alpha}(\mathbb{R}^2)$ ($0<\alpha<1$), there exists a positive constant $A_0=A_0(\alpha,\kappa,\theta_0)$ such that for any $A\geqslant A_0$, (\ref{1.1}) possesses a unique global solution $\theta$ in the class
\begin{equation}\label{1.2}
	\theta^A\in C([0,\infty);H^{2-\alpha}(\mathbb{R}^2))\cap L^2(0,\infty;\dot{H}^{2-\frac{\alpha}{2}}(\mathbb{R}^2)).
\end{equation}
It is also shown in \cite{CMX} that
\begin{equation*}
	\theta^A-e^{-\kappa t(-\Delta)^{\alpha/2}}e^{-At\mathcal{R}_1}\theta_0\to 0
\end{equation*}
as $A\to \infty$ in $ L^{\infty}(0,\infty;H^{2-\alpha}(\mathbb{R}^2))\cap L^2(0,\infty;\dot{H}^{2-\frac{\alpha}{2}}(\mathbb{R}^2))$.

In this paper, we consider the global regularity for (\ref{1.1}) with $0<\alpha\leqslant 1$, $\kappa>0$ and $A\neq 0$. We first prove the existence of a unique global solution to (\ref{1.1}) for given initial data $\theta_0$ which belongs to the subcritical Sobolev space $H^s(\mathbb{R}^2)$ $(s>2-\alpha)$ if the size of dispersion parameter is so large that the explicit size condition (\ref{1.4}) below is satisfied.
Second, we consider the critical case $s=2-\alpha$ with $0<\alpha<1$ and obtain the global regularity result in $H^{2-\alpha}(\mathbb{R}^2)$.
We also find that we can take the lower bound $A_0$ of the size of the dispersion parameter uniformly in $\theta_0\in K$ for fixed subset $K\subset H^{2-\alpha}(\mathbb{R}^2)$ which is precompact in $\dot{H}^{2-\alpha}(\mathbb{R}^2)\cap\dot{H}^{1-\alpha}(\mathbb{R}^2)$.

Our first main result is as follows.
\begin{thm}\label{t1-1}
  Let $0<\alpha\leqslant 1$, $\kappa>0$ and let $p,s$ satisfy
  \begin{equation}\label{1.3}
    \frac{8}{4-\alpha}\leqslant p<\frac{4}{2-\alpha},\quad 2-\alpha<s<\min\left\{1+\frac{2}{p}-\frac{\alpha}{2},\ 2-\left(\frac{3}{4}+\frac{1}{2p}\right)\alpha\right\}.
  \end{equation}
  Then, there exists a positive constant $C_0=C_0(\alpha,p,s)$ such that if $\theta_0\in H^s(\mathbb{R}^2)$ and $A\in \mathbb{R}\setminus\{0\}$
  satisfy
  \begin{equation}\label{1.4}
    \begin{split}
      \|\theta_0\|_{\dot{H}^s}
      &\leqslant C_0\kappa^{\frac{2-s}{\alpha}}\left[\min\left\{|A|,\kappa^{\frac{1}{s+\alpha-1}}|A|^{\frac{s+\alpha-2}{s+\alpha-1}}\right\}\right]^{\frac{s-(2-\alpha)}{\alpha}},\\
      \|\theta_0\|_{\dot{H}^{s-1}}
       &\leqslant C_0\kappa^{\frac{2-s}{\alpha}}|A|^{\frac{s-(2-\alpha)}{\alpha}},
    \end{split}
  \end{equation}
  then (\ref{1.1}) possesses a unique global solution $\theta$ in the class
  \begin{equation}\label{1.5}
    \theta \in C([0,\infty);H^s(\mathbb{R}^2))\cap L^r(0,\infty;\dot{B}_{p,2}^s(\mathbb{R}^2)),
  \end{equation}
  where $r=\alpha/(s-(1+2/p-\alpha))$.
\end{thm}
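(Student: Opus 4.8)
The plan is to construct the solution by a contraction mapping argument applied to the Duhamel formulation in a scaling-critical resolution space, using the dispersive gain in $|A|$ to supply the smallness needed to close the estimates. Writing $S(t):=e^{-t(\kappa(-\Delta)^{\alpha/2}+A\mathcal R_1)}$ for the linear dissipative--dispersive propagator, whose Fourier symbol is $e^{-\kappa t|\xi|^\alpha}e^{-iAt\xi_1/|\xi|}$, and using $\nabla\cdot u=0$ to rewrite the nonlinearity as $u\cdot\nabla\theta=\nabla\cdot(u\theta)$, I will look for a fixed point of
\[
 \Phi(\theta)(t):=S(t)\theta_0-\int_0^t S(t-\tau)\,\nabla\cdot(u\theta)(\tau)\,d\tau,\qquad u=\mathcal R^\perp\theta,
\]
in a closed ball of a space carrying, simultaneously, the critical Strichartz norm $L^r(0,\infty;\dot B^s_{p,2})$, a lower-order Strichartz norm at the $\dot H^{s-1}$ level (the source of the second line of \eqref{1.4}), the energy norm $L^\infty(0,\infty;H^s)$, and the parabolic smoothing norm $L^2(0,\infty;\dot H^{s+\alpha/2})$. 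Note that $\mathcal R_1$ has a purely imaginary symbol, so $\langle A\mathcal R_1 f,f\rangle_{\dot H^\sigma}=0$ for real $f$; consequently $S(t)$ is a contraction on every $\dot H^\sigma$, the dispersive forcing being energy-neutral and contributing only through Strichartz-type gains.

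The first technical ingredient is a family of Strichartz estimates for $S(t)$ with \emph{explicit} dependence on $\kappa$ and $A$. Decomposing dyadically in frequency and then into angular sectors, one observes that the phase $A\xi_1/|\xi|=A\cos\varphi$ (in polar coordinates $\xi=\rho e^{i\varphi}$) is constant in the radial variable but has nondegenerate second derivative in $\varphi$ away from the poles; one-dimensional stationary phase in $\varphi$, combined with the Gaussian-type factor $e^{-\kappa t\rho^\alpha}$, yields dispersive decay in $|At|$ at each frequency scale. Interpolating with the trivial $L^2$ bound, running a $TT^\ast$ argument and summing over sectors and Littlewood--Paley pieces, one obtains homogeneous and retarded estimates of the schematic form
\[
 \|S(\cdot)f\|_{L^r_t\dot B^s_{p,2}}\lesssim\Lambda(\kappa,A)\big(\|f\|_{\dot H^s}+\|f\|_{\dot H^{s-1}}\big),\qquad
 \Big\|\int_0^t S(t-\tau)F(\tau)\,d\tau\Big\|_{L^r_t\dot B^s_{p,2}}\lesssim\Lambda(\kappa,A)\,\|F\|_{L^{\tilde r'}_t\dot B^{\tilde s}_{\tilde p',2}},
\]
valid for admissible exponents determined by the window \eqref{1.3}, where $\Lambda(\kappa,A)$ is a negative power of $\min\{|A|,\kappa^{1/(s+\alpha-1)}|A|^{(s+\alpha-2)/(s+\alpha-1)}\}$ times a power of $\kappa$; the two competing quantities in the minimum correspond to the regime in which dispersion alone governs the decay and the regime in which dissipation and dispersion cooperate, and this precise form emerges only after optimizing the threshold separating low and high frequencies. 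For the $L^\infty_t H^s$ and $L^2_t\dot H^{s+\alpha/2}$ components I will instead use the standard parabolic maximal-regularity estimates for $e^{-\kappa t(-\Delta)^{\alpha/2}}$ (or, equivalently, the $H^s$ energy identity, in which the $Au_2$ term drops out by the observation above).

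The second ingredient is the nonlinear estimate. Using the retarded Strichartz bound together with the Bony paraproduct decomposition, Besov product inequalities, and Bernstein's inequality to pass between the exponents $p$ and $2$, I will estimate $\|\nabla\cdot(u\theta)\|_{L^{\tilde r'}_t\dot B^{\tilde s}_{\tilde p',2}}$ by products of the form $\|u\|_{L^r_t\dot B^s_{p,2}}\,\|\theta\|_{L^\infty_t(\dot H^{s-1}\cap H^s)}$ (with $u$ and $\theta$ carrying the same regularity since $u=\mathcal R^\perp\theta$), and similarly control the divergence-form transport term in the energy and smoothing norms, using the embedding of $\dot B^s_{p,2}$ into an $L^\infty$-type space for $\nabla u$ guaranteed by $s<2-(\tfrac34+\tfrac1{2p})\alpha$. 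Collecting everything gives $\|\Phi(\theta)\|_X\le C\,\mathcal I(\theta_0)+C\,\Lambda(\kappa,A)\|\theta\|_X^2$ together with a matching difference estimate, where $\mathcal I(\theta_0)$ is bounded by the left-hand sides of \eqref{1.4}; a contraction on the ball of radius $\sim\Lambda(\kappa,A)^{-1}$ is then possible precisely when $\mathcal I(\theta_0)\lesssim\Lambda(\kappa,A)^{-1}$, and unwinding the powers of $\kappa$ and $A$ in $\Lambda$ reproduces the explicit condition \eqref{1.4}. Uniqueness in the class \eqref{1.5} follows from the same difference estimate (or a short Gronwall argument), and the persistence $\theta\in C([0,\infty);H^s)$ is obtained in the usual way from the integral equation once the a priori bound is available.

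The main obstacle will be the bookkeeping in the linear estimates: one must track the exact powers of both $\kappa$ and $A$ through the stationary-phase / interpolation / $TT^\ast$ chain and, crucially, choose the frequency threshold separating the dissipation-dominated and dispersion-dominated regimes so that $\Lambda(\kappa,A)$ acquires the sharp form featuring the minimum in \eqref{1.4}, rather than merely some negative power of $|A|$. Compounding this, the target regularity $s$ lies only slightly above the scaling-critical value $2-\alpha$, so the product and commutator estimates are essentially tight and leave almost no slack; this is exactly what forces the narrow admissible window \eqref{1.3} for $(p,s)$ and is the reason the low-frequency ($\dot H^{s-1}$) bound must be propagated separately alongside the critical one.
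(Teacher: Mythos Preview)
There is a genuine gap in your plan: the Duhamel--contraction approach you propose is precisely the one the paper singles out as \emph{not} working here. The linear Strichartz estimate for $S(t)$ (Proposition~3.1) carries no gain of regularity --- it maps $\dot H^s$ to $L^r_t\dot B^s_{p,2}$ at the same level $s$ --- so any retarded version would have to place $\nabla\cdot(u\theta)$ at regularity $s$ in a dual Strichartz norm, and the fractional smoothing of $e^{-\kappa t(-\Delta)^{\alpha/2}}$ with $0<\alpha\le 1$ is too weak to absorb the full spatial derivative. The cancellation that saves the day in the energy method, namely $\langle u\cdot\nabla\Delta_j\theta,\Delta_j\theta\rangle=0$ from $\nabla\cdot u=0$, is unavailable when you estimate the Duhamel integral in a mixed Lebesgue--Besov norm; in that setting the full gradient hits you with no compensating commutator gain. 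This is exactly the obstruction the paper flags when it contrasts the situation with the rotating Navier--Stokes case of Iwabuchi--Takada and Koh--Lee--Takada, where $\alpha=2$ and the heat kernel supplies enough smoothing.

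What the paper actually does is an energy argument on a \emph{perturbation}: it builds a linear iterate $\theta^{n+1}$ via a transport--dissipation equation with drift $u^n=\mathcal R^\perp\theta^n$, subtracts the \emph{regularized} linear solution $\widetilde\theta^A_N:=S(t)S_{N+3}\theta_0$ (frequency cut at $2^N$), and runs $\dot H^{s+1-2/p}$ energy estimates on the difference. The transport term splits into a commutator $[u^n,\Delta_j]\cdot\nabla(\theta^{n+1}-\widetilde\theta^A_N)$ (one derivative better, handled by Lemma~2.4) and a forcing $u^n\cdot\nabla\widetilde\theta^A_N$; for the latter the frequency cutoff gives $\|\widetilde\theta^A_N\|_{\dot B^{s+1}_{p,2}}\lesssim 2^N\|S(t)\theta_0\|_{\dot B^s_{p,2}}$, while the initial perturbation $(1-S_{N+3})\theta_0$ contributes $O(2^{(2-\alpha-s)N})$. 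The explicit minimum in \eqref{1.4} does \emph{not} come from optimizing a frequency threshold inside the Strichartz estimate as you suggest; it comes from balancing these two $N$-dependent contributions and choosing $2^N\sim(\kappa^{-1}|A|)^{\frac{1}{\alpha}\frac{s+\alpha-2}{s+\alpha-1}}$. Your proposal misses both the energy/commutator mechanism that makes the estimate close and the correct origin of the $\min$ structure in the condition on $\theta_0$.
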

\begin{rem}\label{t1-2}\noindent
\begin{itemize}
	\item [(i)] For each $0<\alpha\leqslant 1$, the set of all $(p,s)$ satisfying the assumption (\ref{1.3}) is not empty.
	\item [(ii)] The function space $L^r(0,\infty;\dot{B}_{p,2}^s(\mathbb{R}^2))$ is invariant under the scaling (\ref{1.1.1}), that is $\|\theta_{\lambda}\|_{L^r(0,\infty;\dot{B}_{p,2}^s)}\sim\|\theta\|_{L^r(0,\infty;\dot{B}_{p,2}^s)}$ $(\lambda>0)$.
	\item[(iii)] It immediately follows from Theorem \ref{t1-1} that for any $\theta_0\in H^s(\mathbb{R}^2)$, if $A\in \mathbb{R}$ satisfies $|A|\geqslant A_0$, where
	\begin{equation*}
		A_0:=C
		\max\left\{\|\theta_0\|_{\dot{H}^s}^{\frac{s+\alpha-1}{s+\alpha-2}},\
		\|\theta_0\|_{\dot{H}^s},\ \|\theta_0\|_{\dot{H}^{s-1}}
		 \right\}^{\frac{\alpha}{s-(2-\alpha)}}
	\end{equation*}
	for some $C=C(\kappa, \alpha, p, s)>0$, then (\ref{1.1}) possesses a unique solution $\theta$ in the class (\ref{1.5}). We note that $A_0$ is independent of the size of $L^2(\mathbb{R}^2)$-norm of the initial data.
\end{itemize}
\end{rem}
Remark \ref{t1-2} (iii) implies that the size of dispersion parameter to obtain the global solution is determined by each subset $B\subset H^s(\mathbb{R}^2)$ which is bounded in $\dot{H}^s(\mathbb{R}^2)\cap \dot{H}^{s-1}(\mathbb{R}^2)$.

Next, we consider the global regularity in the critical Sobolev space $H^{2-\alpha}(\mathbb{R}^2)$ with $0<\alpha<1$ and obtain the following result:
\begin{thm}\label{t1-4}
	Let $0<\alpha<1$, $\kappa>0$ and $8/(4-\alpha)<p<4/(2-\alpha)$.
	Then, for each subset $K\subset H^{2-\alpha}(\mathbb{R}^2)$ which is precompact in $\dot{H}^{2-\alpha}(\mathbb{R}^2)\cap \dot{H}^{1-\alpha}(\mathbb{R}^2)$,
	there exists a constant $A_0=A_0(\kappa,\alpha,p,K)>0$ such that for every $\theta_0\in K$ and $A\in \mathbb{R}$ with $|A|\geqslant A_0$,
	(\ref{1.1}) possesses a unique global solution $\theta$ in the class
	\begin{equation}\label{1.7}
		\theta \in C([0,\infty);H^{2-\alpha}(\mathbb{R}^2))\cap L^{\rho}(0,\infty;\dot{B}_{p,2}^{2-\alpha}(\mathbb{R}^2)),
	\end{equation}
	where $\rho=\alpha/(1-2/p)$.
\end{thm}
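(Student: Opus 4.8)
The plan is to carry out the scaling-critical counterpart of the fixed-point argument behind Theorem~\ref{t1-1}, the one new device being a Littlewood--Paley splitting of the initial datum that converts precompactness of $K$ into smallness of the critical linear term. Fix $p$ as in the statement; note that $\rho=\alpha/(1-2/p)$ is exactly the exponent $r$ of Theorem~\ref{t1-1} evaluated at $s=2-\alpha$, and $\rho\in(2,4)$, the strict lower bound on $p$ supplying the extra integrability needed to run the estimates at the critical endpoint. I work in the resolution space $\mathcal{Y}:=L^{\rho}(0,\infty;\dot{B}^{2-\alpha}_{p,2}(\mathbb{R}^2))\cap L^{\infty}(0,\infty;H^{2-\alpha}(\mathbb{R}^2))$, i.e.\ the $s=2-\alpha$ version of the class (\ref{1.5}), and seek $\theta$ as a fixed point of the Duhamel map $\Phi(\theta)(t)=G_A(t)\theta_0-\int_0^t G_A(t-\tau)\,\nabla\!\cdot(u\theta)(\tau)\,d\tau$, where $G_A(t)=e^{-\kappa t(-\Delta)^{\alpha/2}}e^{-At\mathcal{R}_1}$ and $u=\mathcal{R}^{\perp}\theta$ (using $\nabla\cdot u=0$). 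The linear and nonlinear estimates used in the proof of Theorem~\ref{t1-1}, taken at $s=2-\alpha$, provide: a dispersive--Strichartz bound $\|G_A(\cdot)f\|_{L^{\rho}\dot{B}^{2-\alpha}_{p,2}}\le C\,(\|f\|_{\dot{H}^{2-\alpha}}+\|f\|_{\dot{H}^{1-\alpha}})$ with $C=C(\kappa,\alpha,p)$ uniform for $|A|\ge1$, refined for frequency-truncated data to $\|G_A(\cdot)P_{<N}f\|_{L^{\rho}\dot{B}^{2-\alpha}_{p,2}}\le\eta(N,|A|)\,(\|f\|_{\dot{H}^{2-\alpha}}+\|f\|_{\dot{H}^{1-\alpha}})$ with $\eta(N,|A|)\to0$ as $|A|\to\infty$ for each fixed $N$; a critical bilinear bound $\big\|\int_0^{\cdot}G_A(\cdot-\tau)\nabla\!\cdot(u\theta)\,d\tau\big\|_{L^{\rho}\dot{B}^{2-\alpha}_{p,2}}\le C_1\,\|\theta\|_{L^{\rho}\dot{B}^{2-\alpha}_{p,2}}^{2}$; and the $H^{2-\alpha}$ energy estimate $\|\theta\|_{L^{\infty}H^{2-\alpha}}\le\|\theta_0\|_{H^{2-\alpha}}\exp\!\big(C_2\|\theta\|_{L^{\rho}\dot{B}^{2-\alpha}_{p,2}}^{q}\big)$ for some $q<\infty$, in which the dispersive contribution disappears because $\mathcal{R}_1$ is skew-symmetric.

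Next I select the constants in the right order. Set $M:=\sup_{\theta_0\in K}\big(\|\theta_0\|_{\dot{H}^{2-\alpha}}+\|\theta_0\|_{\dot{H}^{1-\alpha}}\big)$, which is finite since $K$ is bounded in $\dot{H}^{2-\alpha}\cap\dot{H}^{1-\alpha}$. First, fix $\varepsilon>0$, depending only on $(\kappa,\alpha,p)$, small enough that $C_1\varepsilon$ is below the threshold for $\Phi$ to be a contraction, in the $L^{\rho}\dot{B}^{2-\alpha}_{p,2}$-metric, on the ball $\{\,\|\theta\|_{L^{\rho}\dot{B}^{2-\alpha}_{p,2}}\le2\varepsilon\,\}$ of $\mathcal{Y}$, the $L^{\infty}H^{2-\alpha}$ component being controlled by the energy estimate rather than by smallness. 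Next, invoke precompactness of $K$ in $\dot{H}^{2-\alpha}\cap\dot{H}^{1-\alpha}$: comparing with a finite $\varepsilon$-net and using that $\|P_{\ge N}g\|\to0$ as $N\to\infty$ for each fixed net element, there is $N=N(\varepsilon,K)$ with $\sup_{\theta_0\in K}\big(\|P_{\ge N}\theta_0\|_{\dot{H}^{2-\alpha}}+\|P_{\ge N}\theta_0\|_{\dot{H}^{1-\alpha}}\big)\le\varepsilon/(2C)$. Finally, freezing this $N$ and using $\eta(N,|A|)\to0$, choose $A_0=A_0(\kappa,\alpha,p,K)\ge1$ so that $\eta(N,|A|)\,M\le\varepsilon/2$ whenever $|A|\ge A_0$.

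For $|A|\ge A_0$ and $\theta_0\in K$, splitting $\theta_0=P_{<N}\theta_0+P_{\ge N}\theta_0$ and applying the two linear bounds gives
\[
\|G_A(\cdot)\theta_0\|_{L^{\rho}\dot{B}^{2-\alpha}_{p,2}}\le\eta(N,|A|)\,M+C\big(\|P_{\ge N}\theta_0\|_{\dot{H}^{2-\alpha}}+\|P_{\ge N}\theta_0\|_{\dot{H}^{1-\alpha}}\big)\le\varepsilon/2+\varepsilon/2=\varepsilon .
\]
Hence $\Phi$ maps the ball of radius $2\varepsilon$ into itself and contracts there, producing a unique global $\theta$ with $\|\theta\|_{L^{\rho}(0,\infty;\dot{B}^{2-\alpha}_{p,2})}\le2\varepsilon$; the energy estimate then promotes $\theta$ to the class $C([0,\infty);H^{2-\alpha})$ with $\|\theta\|_{L^{\infty}H^{2-\alpha}}\le\|\theta_0\|_{H^{2-\alpha}}e^{C_2(2\varepsilon)^{q}}<\infty$, so $\theta$ lies in the class (\ref{1.7}), and uniqueness there follows from the same bilinear bound. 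Since $A_0$ depends only on $\kappa$, $\alpha$, $p$ and $K$, this establishes Theorem~\ref{t1-4}.

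The crux of the work is the two frequency-sensitive linear estimates: the uniform-in-$A$ Strichartz bound in $L^{\rho}\dot{B}^{2-\alpha}_{p,2}$ controlled by precisely the pair $\dot{H}^{2-\alpha}$ and $\dot{H}^{1-\alpha}$ --- this particular pairing, forced by the degenerate geometry of the phase $\xi_1/|\xi|$ together with the parabolic smoothing, is exactly why the hypothesis on $K$ involves both spaces, and it is the critical-case shadow of the two-norm condition (\ref{1.4}) --- and, above all, the quantitative decay $\eta(N,|A|)\to0$ on a fixed frequency band, obtained by combining the $(|A|t)^{-1/2}$-type dispersive decay of $e^{-At\mathcal{R}_1}$ with the dissipation of $e^{-\kappa t(-\Delta)^{\alpha/2}}$. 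A structural subtlety is that the three choices must be made strictly in the order $\varepsilon\to N\to A_0$; this is why it is cleaner to decompose the datum only inside the linear estimate, rather than to invoke Theorem~\ref{t1-1} for $P_{<N}\theta_0$ and treat $P_{\ge N}\theta_0$ as a perturbation, an approach in which one would additionally have to track how the Strichartz norm of the low-frequency solution depends on the truncation level.
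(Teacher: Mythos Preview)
The central gap is the claimed ``critical bilinear bound''
\[
\Big\|\int_0^{\cdot}G_A(\cdot-\tau)\nabla\!\cdot(u\theta)\,d\tau\Big\|_{L^{\rho}\dot{B}^{2-\alpha}_{p,2}}\le C_1\,\|\theta\|_{L^{\rho}\dot{B}^{2-\alpha}_{p,2}}^{2}.
\]
No such estimate appears in, or follows from, the proof of Theorem~\ref{t1-1}; the paper explicitly notes in the introduction that the Duhamel approach of \cite{IT,KLT} fails here because the smoothing of $e^{-\kappa t(-\Delta)^{\alpha/2}}$ with $\alpha\le 1$ is too weak to absorb the full gradient in $u\cdot\nabla\theta$. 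Concretely, after the parabolic gain one lands at the regularity $\dot{H}^{2s-4/p}$ (see (\ref{4.10})), and by Lemma~\ref{t2-3} the product $u\cdot\nabla\theta$ in that space costs $\|\theta\|_{\dot{B}^{s+1}_{p,2}}$, one derivative beyond what the solution space controls. The paper closes the estimate only by (i) passing to the energy form, where $\langle u\cdot\nabla\Delta_j\theta,\Delta_j\theta\rangle=0$ and the surviving commutator $[u,\Delta_j]\cdot\nabla\theta$ recovers a derivative via Lemma~\ref{t2-4}, and (ii) splitting off the regularized linear part $\widetilde{\theta}^A_N$, paying a factor $2^N$ on $\|\widetilde{\theta}^A_N\|_{\dot{B}^{s+1}_{p,2}}$ that is later compensated by taking $|A|$ large. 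Your Duhamel contraction scheme bypasses both mechanisms and simply asserts the missing derivative away.

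This also explains two further mismatches in your proposal. First, the $\dot{H}^{1-\alpha}$ norm does not enter through the linear Strichartz bound (Proposition~\ref{t3-1} at $s=2-\alpha$, $r=\rho$ gives the endpoint $|A|^0$ and only $\|f\|_{\dot{H}^{2-\alpha}}$); it enters because the energy argument forces one to control $\|\theta^n\|_{L^\rho\dot{B}^{s-1}_{p,2}}$ to handle the $u^n\cdot\nabla\widetilde{\theta}^A_N$ term, whence the $Y$-norm $\|\cdot\|_{L^\rho\dot{B}^{2-\alpha}_{p,2}}+\|\cdot\|_{L^\rho\dot{B}^{1-\alpha}_{p,2}}$ in (\ref{5.3}). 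Second, the correct order of choices is not ``$\varepsilon\to N\to A_0$'' for a linear splitting alone: the actual estimate (\ref{5.3}) carries an explicit factor $2^N$ multiplying $\|T_A(\cdot)\theta_0\|_Y$, so one must first fix $N_0$ from the compactness (Lemma~\ref{t5-1}, (\ref{5.1})) and then choose $A_0$ large enough that $\|T_A(\cdot)\theta_0\|_Y$ beats both the smallness threshold \emph{and} the factor $2^{N_0}$ (see (\ref{5.7})). To repair the argument you should drop the Duhamel contraction and follow the successive-approximation/energy scheme of Section~4 at $s=2-\alpha$, replacing the explicit powers of $|A|$ by the uniform smallness supplied by Lemma~\ref{t5-1}.
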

\begin{rem}\label{t1-5}\noindent
\begin{itemize}
	\item [(i)] In the view point of the lower bound $A_0$ of the dispersion parameter to obtain the global solution, Theorem \ref{t1-4} improves Theorem 1.4 in \cite{CMX} since our $A_0$ is independent of $\|\theta_0\|_{L^2}$ and is uniformly determined in $\theta_0\in K$ for each $K$.
	\item [(ii)] Our proof breaks down if $\alpha=1$ or $p=8/(4-\alpha)$. We mention this reason in the last paragraph of Section 5.
\end{itemize}
\end{rem}
Theorem \ref{t1-1} and Theorem \ref{t1-4} can be compared to the global regularity results for the 3D rotating Navier-Stokes equation.
Iwabuchi-Takada \cite{IT} and Koh-Lee-Takada \cite{KLT} proved the global regularity for the 3D rotating Navier-Stokes equation in the subcritical Sobolev spaces $\dot{H}^s(\mathbb{R}^3)$ ($1/2<s<9/10$) and obtained the relationship between the initial data and the dispersion parameter so-called the Coriolis parameter.
\cite{IT} also showed the global regularity in the critical Sobolev space $\dot{H}^{\frac{1}{2}}(\mathbb{R}^3)$ and found that the lower bound of the size of Coriolis parameter is detemined by precompact set in $\dot{H}^{\frac{1}{2}}(\mathbb{R}^3)$.
Their method is based on the contraction mapping principle via the smoothing effect of the heat kernel $e^{t\Delta}$ and the dispersive estimate for the semigroup generated by the Coriolis force term.
However, in the case of the 2D dissipative dispersive quasi-geostrophic equation (\ref{1.1}), we cannot to derive the suitable estimate for the Duhamel term
\begin{equation*}
	\int_0^t e^{-\kappa (t-\tau)(-\Delta)^{\frac{\alpha}{2}}}e^{-A(t-\tau)\mathcal{R}_1}(u(\tau)\cdot \nabla \theta(\tau))\ d\tau
\end{equation*}
and cannot apply the method of \cite{IT} and \cite{KLT}, since the smoothing effect of the semigroup $e^{-\kappa t(-\Delta)^{\frac{\alpha}{2}}}$ is too weak to control the spacial derivative in the nonlinear term of (\ref{1.1}).
Focusing on the asymptotic profile of the solution to (\ref{1.1}) as $|A|\to \infty$, \cite{CMX} got over this difficulty by the energy method and commutator estimates for the perturbation between the solution to (\ref{1.1}) and the regularized linear solution.
However, we cannot obtain the explicit relationship between homogeneous Sobolev norms of the initial data and the size of dispersion parameter by using the method of \cite{CMX}.
In this article, we overcome these difficulties and obtain Theorem \ref{t1-1} and Theorem \ref{t1-4} by the following ideas.

In the proof of Theorem \ref{t1-1},
we consider the successive approximation sequence $\{\theta^n\}_{n=0}^{\infty}$ (see (\ref{4.1}) in Section 4) and decompose $\theta^{n+1}$ into the regularized linear solution
$\widetilde{\theta}^A_N=e^{-\kappa t(-\Delta)^{\frac{\alpha}{2}}}e^{-At\mathcal{R}_1}S_{N+3}\theta_0$ (see (\ref{2.0.2.1}) in Section 2, (\ref{3.3}) in Section 3 and Section 4) and the perturbation $\theta^{n+1}-\widetilde{\theta}^A_{N}$.
The first term is controled by the Strichartz type space-time estimate for the linear propagator $e^{-\kappa t(-\Delta)^{\frac{\alpha}{2}}}e^{-At\mathcal{R}_1}$ (Proposition \ref{t3-1} in Section 3).
For the estimate of the second term $\theta^{n+1}-\widetilde{\theta}^A_{N}$, we apply the energy method to the equation
\begin{equation}\label{1.8}
		\begin{split}
			&\partial_t(\theta^{n+1}-\widetilde{\theta}^A_N)+\kappa(-\Delta)^{\frac{\alpha}{2}}(\theta^{n+1}-\widetilde{\theta}^A_N)+A\mathcal{R}_1(\theta^{n+1}-\widetilde{\theta}^A_N)\\
			&\quad\quad\quad\quad\quad\quad\quad\quad\quad\quad=-u^n\cdot \nabla(\theta^{n+1}-\widetilde{\theta}^A_N)-u^n\cdot\nabla \widetilde{\theta}^A_N
		\end{split}
\end{equation}
with $u^n=\mathcal{R}^{\perp}\theta^n$ and $\theta^{n+1}(0,x)-\widetilde{\theta}^A(0,x)=(1-S_{N+3})\theta_0$. The idea of computation for the energy estimate is based on the argument in \cite{Miura}, which combines the smoothing effect of the semigroup $e^{-\kappa t(-\Delta)^{\frac{\alpha}{2}}}$ and appropriate commutator estimates. Thanks to these two properties, we can control the first term of the right hand side of (\ref{1.8}).
In order to estimate the second term of the right hand side of (\ref{1.8}), we adapt the product estimate (Lemma \ref{t2-3} in Section 2) and
\begin{equation}\label{1.9}
	\|\widetilde{\theta}^A_N(t)\|_{\dot{B}_{p,2}^{s+1}}\leqslant C2^N\|e^{-\kappa t(-\Delta)^{\frac{\alpha}{2}}}e^{-At\mathcal{R}_1}\theta_0\|_{\dot{B}_{p,2}^s}.
\end{equation}
This is the reason why we use the regularized linear solution instead of the linear solution. This idea comes from \cite{CDGG,CMX}.
The condition of $\|\theta_0\|_{\dot{H}^{s-1}}$ in (\ref{1.4}) is derived from the estimate of this term.
It also follows from the smoothing effect of $e^{-\kappa t(-\Delta)^{\frac{\alpha}{2}}}$ that the initial pertubation in the energy estimate converges to $0$ as $N\to \infty$ with the convergence rate $O(2^{(2-\alpha-s)N})$.
This convergence rate, (\ref{1.9}) and the Strichartz type estimates for the linear solution (see in Section 3) give us the explicit size condition of the initial data and dispersion parameter (\ref{1.4}) by choosing appropriate $N$.
Hence, combining these estimates, we have the uniform boundedness of $\{\theta^n\}_{n=0}^{\infty}$ in $L^r(0,\infty; \dot{B}_{p,2}^s(\mathbb{R}^2))$ under the assumption (\ref{1.4}).
This boundedness and the similar energy calculation as above yield the uniform boundedness of $\{\theta^n\}_{n=0}^{\infty}$ in  $L^2(0,\infty; \dot{B}_{p,2}^{\sigma}(\mathbb{R}^2))\ (\sigma=1+2/p-\alpha/2)$.
Combining this boundedness and the standard energy calculation, we have the uniform boundedness of $\{\theta^n\}_{n=0}^{\infty}$ in  $\widetilde{L}^{\infty}(0,\infty; H^s(\mathbb{R}^2))$. Next, we derive the estimate for the difference $\theta^{n+1}-\theta^n$ in $L^r(0,\infty;\dot{B}_{p,2}^{s-1}(\mathbb{R}^2))$ by the similar argument as above and
we see that $\{\theta^n\}_{n=0}^{\infty}$ is a Cauchy sequence in $L^r(0,\infty;\dot{B}_{p,2}^{s-1}(\mathbb{R}^2))$ if (\ref{1.4}) holds.
Hence, letting $n\to \infty$, we obtain a limit $\theta$ in $L^r(0,\infty;\dot{B}_{p,2}^{s-1}(\mathbb{R}^2))$ and we find that $\theta$ is a solution to (\ref{1.1}).
We also see that $\theta$ satisfies (\ref{1.5}) by the uniform boundedness in $\widetilde{L}^{\infty}(0,\infty;H^s(\mathbb{R}^2))\cap L^r(0,\infty; \dot{B}_{p,2}^s(\mathbb{R}^2))$. The uniqueness of solutions holds by the similar energy calculation as above and this completes the proof of Theorem \ref{t1-1}.
In the proof of Theorem \ref{t1-4}, we replace the estimates for the initial perturbation $(1-S_{N+3})\theta_0$ and linear solution $e^{-\kappa t(-\Delta)^{\frac{\alpha}{2}}}e^{-At\mathcal{R}_1}\theta_0$ by estimates derived from the compact convergences in Lemma \ref{t5-1} in Section 5.
Then, the similar argument as in the proof of Theorem \ref{t1-1} completes the proof.

Next, we introduce an application of Theorem \ref{t1-1}. We consider the following quasi-geostrophic equation with the large dispersive forcing and the small dissipative coefficient:
\begin{equation}\label{1.6}
	\begin{cases}
		\partial_t \theta+A^{-\beta}(-\Delta)^{\frac{\alpha}{2}}\theta+u\cdot \nabla\theta+Au_2=0 \quad  & \quad t>0, x\in \mathbb{R}^2,\\
		u=\mathcal{R}^{\perp}\theta=(-\mathcal{R}_2\theta,\mathcal{R}_1\theta) \quad & \quad t>0, x\in \mathbb{R}^2,\\
		\theta(0,x)=\theta_0(x) \quad & \quad x\in \mathbb{R}^2,
	\end{cases}
\end{equation}
where $A>0$, $\beta>0$ and $0<\alpha<1$.
Although the dissipative coefficient $A^{-\beta}$ is small if $A$ is large, by using the dispersive effect of $Au_2$, Wan-Chen \cite{WC2017} proved that
if $0<\beta<1/3$, $0<\eta<1$ with $\gamma=(1-3\beta-5\eta\beta-\eta)/(2\eta+2)>0$ and $\theta_0\in H^s(\mathbb{R}^2)$ with $s\geqslant 4$ satisfy
\begin{equation*}
	\|\theta_0\|_{\dot{B}_{p,1}^{\sigma}}^{\frac{1}{1+\eta}}\|\theta_0\|_{H^{2+\eta}}^{\frac{1+2\eta}{1+\eta}}+\|\theta_0\|_{H^s}^2\leqslant CA^{\gamma},
	\quad p=\frac{2}{2-\eta}, \sigma=3-\eta-\frac{\alpha}{2}(1+3\eta)
\end{equation*}
for some $C>0$, then (\ref{1.6}) possesses a unique global solution. We relax the regularity assumption and obtain the following corollary of Theorem \ref{t1-1}:
\begin{cor}\label{t1-3}
	Let $\alpha,p,s$ and $r$ be the same indices as in Theorem \ref{t1-1} and let
	\begin{equation*}
		0<\beta<\frac{(s+\alpha-2)^2}{(2-s)(s+\alpha-2)+\alpha}.
	\end{equation*}
	If $\theta_0\in H^s(\mathbb{R}^2)$ and $A>0$
	satisfy
	\begin{equation*}
		\begin{split}
			\|\theta_0\|_{\dot{H}^s}
			&\leqslant C_0\min\left\{A^{\frac{s+2-\alpha-(2-s)\beta}{\alpha}},A^{\frac{(s+\alpha-2)^2-((2-s)(s+\alpha-2)+\alpha)\beta}{\alpha(s+\alpha-1)}}\right\},\\
			\|\theta_0\|_{\dot{H}^{s-1}}
			 &\leqslant C_0A^{\frac{s+2-\alpha-(2-s)\beta}{\alpha}},
		\end{split}
	\end{equation*}
	where $C_0$ is the positive constant in Theorem \ref{t1-1}, then (\ref{1.6}) possesses a unique global solution $\theta$ in the class (\ref{1.5}).
\end{cor}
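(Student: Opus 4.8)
The plan is to derive Corollary \ref{t1-3} as an immediate application of Theorem \ref{t1-1}, by observing that (\ref{1.6}) is nothing but the system (\ref{1.1}) in which the dissipative coefficient $\kappa$ is specialized to $\kappa=A^{-\beta}$; this is a legitimate choice because $A>0$ forces $A^{-\beta}>0$. Neither the solution class (\ref{1.5}) nor the exponent $r=\alpha/(s-(1+2/p-\alpha))$ involves $\kappa$, so the only task is to rewrite the smallness conditions (\ref{1.4}), with $\kappa$ replaced by $A^{-\beta}$, purely in terms of $A$ and to check that they reduce to the hypotheses of the corollary.

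First I would handle the bound on $\|\theta_0\|_{\dot H^{s-1}}$: substituting $\kappa=A^{-\beta}$ into the second line of (\ref{1.4}) and collecting the powers of $A$ produces exactly the displayed inequality for $\|\theta_0\|_{\dot H^{s-1}}$ in the corollary. Next, for the bound on $\|\theta_0\|_{\dot H^{s}}$ I would split according to which term realizes the minimum in the first line of (\ref{1.4}). If the first term $|A|$ is the minimum, substituting $\kappa=A^{-\beta}$ gives the first power of $A$ inside the minimum in the corollary. If the second term $\kappa^{1/(s+\alpha-1)}|A|^{(s+\alpha-2)/(s+\alpha-1)}$ is the minimum, then after substituting $\kappa=A^{-\beta}$ one combines the exponents using the elementary identity
\begin{equation*}
  (2-s)(s+\alpha-1)+(s+\alpha-2)=(2-s)(s+\alpha-2)+\alpha ,
\end{equation*}
which yields the second power of $A$ inside the minimum in the corollary. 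Taking the minimum of the two cases recovers the first hypothesis of Corollary \ref{t1-3}, and Theorem \ref{t1-1} then applies to give the global solution in the class (\ref{1.5}).

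Finally, I would explain the role of the restriction $0<\beta<(s+\alpha-2)^2/\big((2-s)(s+\alpha-2)+\alpha\big)$. This is precisely the condition under which the exponent of $A$ appearing in the second branch of the minimum is strictly positive; a short comparison of the two competing exponents shows that this branch carries the smaller exponent, so the single inequality on $\beta$ guarantees that every power of $A$ on the right-hand sides of the smallness conditions is positive. Consequently the right-hand sides tend to $+\infty$ as $A\to\infty$, so that for each fixed $\theta_0\in H^s(\mathbb{R}^2)$ there is $A_0>0$ for which the hypotheses hold for all $A\geqslant A_0$, which is the global-regularity content of the statement.

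I do not expect a genuine obstacle here beyond careful bookkeeping of exponents; the only point deserving attention is the comparison of the two exponents in the minimum, which is what makes the single condition on $\beta$ sufficient rather than requiring a separate inequality for each branch.
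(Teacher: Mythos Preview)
Your approach is correct and is precisely the intended one: the paper gives no separate proof of Corollary \ref{t1-3}, presenting it as an immediate consequence of Theorem \ref{t1-1} via the substitution $\kappa=A^{-\beta}$, and your exponent bookkeeping (including the identity $(2-s)(s+\alpha-1)+(s+\alpha-2)=(2-s)(s+\alpha-2)+\alpha$) is exactly what that substitution requires. Note in passing that your computation for the first branch yields the exponent $(s+\alpha-2-(2-s)\beta)/\alpha$, so the $s+2-\alpha$ appearing in the displayed hypothesis of the corollary is a misprint for $s+\alpha-2$.
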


This paper is organized as follows. In Section 2, we summarize some notations and estimates which are used in later sections.
In Section 3, we establish the Strichartz type space-time estimate for linear solutions. In Section 4, we prove Theorem \ref{t1-1}.
Finally in Section 5, we show Theorem \ref{t1-4}.

Throughout this paper, we denote by $C$ the constant, which may differ in each line. In particular, $C=C(a_1,...,a_n)$ means that $C$ depends only on $a_1,...,a_n$. We define a commutator for two operators $A$ and $B$ as $[A,B]=AB-BA$.
\section{Preliminaries}
Let $\mathscr{S}(\mathbb{R}^2)$ be the set of all Schwartz functions on $\mathbb{R}^2$ and $\mathscr{S}'(\mathbb{R}^2)$ be the set of all tempered distributions on $\mathbb{R}^2$. For $f\in \mathscr{S}(\mathbb{R}^2)$, we define the Fourier transform and the inverse Fourier transform of $f$ by
\begin{equation*}\label{2.0}
\begin{array}{cc}
		\mathscr{F}[f](\xi)=\widehat{f}(\xi):=\displaystyle\int_{\mathbb{R}^2}e^{-i\xi\cdot x}f(x)\ dx, &	\mathscr{F}^{-1}[f](x):=\dfrac{1}{(2\pi)^2}\displaystyle\int_{\mathbb{R}^2}e^{i\xi\cdot x}f(\xi)\ d\xi,
\end{array}
\end{equation*}
respectively.
$\{\varphi_j\}_{j\in \mathbb{Z}}$ is called the homogeneous Littlewood-Paley decomposition if $\varphi_0\in \mathscr{S}(\mathbb{R}^2)$ satisfy ${\rm supp}\ \widehat{\varphi_0} \subset \{2^{-1}\leqslant |\xi|\leqslant 2\}$, $0\leqslant \widehat{\varphi_0}\leqslant 1$ and
\begin{equation*}\label{2.0.1}
		 \sum_{j\in\mathbb{Z}}\widehat{\varphi_j}(\xi)=1, \quad \quad \xi \in \mathbb{R}^2\setminus\{0\},
\end{equation*}\label{2.0.2}
where $\widehat{\varphi_j}(\xi)=\widehat{\varphi_0}(2^{-j}\xi)$.
Let us write
\begin{equation}\label{2.0.2.1}
		\Delta_jf:= \mathscr{F}^{-1}\widehat{\varphi_j}(\xi)\mathscr{F}f,\quad S_jf:=\sum_{k\leqslant j-3}\Delta_jf
\end{equation}
for $j\in \mathbb{Z}$ and $f\in \mathscr{S}'(\mathbb{R}^2)$.
Using the homogeneous Littlewood-Paley decomposition, we define the Besov spaces. For $1\leqslant p,q\leqslant \infty$ and $s\in \mathbb{R}$, the homogeneous Besov space $\dot{B}^s_{p,q}(\mathbb{R}^2)$ is defined by
\begin{equation*}\label{2.0.3}
	\begin{split}
		\dot{B}^s_{p,q}(\mathbb{R}^2)&:=\left\{f\in \mathscr{S}'(\mathbb{R}^2)/\mathscr{P}(\mathbb{R}^2)\ ;\ \|f\|_{\dot{B}^s_{p,q}}<\infty\right\},\\
		\|f\|_{\dot{B}^s_{p,q}}&:=\left\|\left\{2^{js}\|\Delta_j f\|_{L^p}\right\}_{j\in \mathbb{Z}} \right\|_{l^q(\mathbb{Z})},
	\end{split}
\end{equation*}
where $\mathscr{P}(\mathbb{R}^2)$ denotes by the set of all polynomials on $\mathbb{R}^2$. It is well known that
if $1\leqslant p,q\leqslant \infty$ and $s<2/p$, then we can identify $\dot{B}_{p,q}^s(\mathbb{R}^2)$ as
\begin{equation*}
	\left\{f\in \mathscr{S}'(\mathbb{R}^2)\ ;\ f=\sum_{j\in \mathbb{Z}}\Delta_jf\ {\rm in}\ \mathscr{S}'(\mathbb{R}^2)\ {\rm and}\ \|f\|_{\dot{B}_{p,q}^s}<\infty\right\}.
\end{equation*}
The homogeneous Sobolev spaces based on $L^2(\mathbb{R}^2)$ are defined as the special case of the homogeneous Besov spaces:
\begin{equation*}\label{2.0.5}
		\dot{H}^s(\mathbb{R}^2):=\dot{B}_{2,2}^s(\mathbb{R}^2),\quad \quad s\in\mathbb{R}.
\end{equation*}
For $s>0$, the inhomogeneous Sobolev space $H^s(\mathbb{R}^2)$ is defined by
\begin{equation*}
	\begin{split}
		H^s(\mathbb{R}^2)&:=\dot{H}^s(\mathbb{R}^2)\cap L^2(\mathbb{R}^2),\\
		\|f\|_{H^s}&:=\|f\|_{\dot{H}^s}+\|f\|_{L^2}.
	\end{split}
\end{equation*}
In this paper, we also use the space-time Besov space $\widetilde{L^r}(0,\infty;\dot{B}_{p,q}^s(\mathbb{R}^2))\ (1\leqslant p,q,r\leqslant \infty,s\in\mathbb{R})$, which is defined by
\begin{equation*}\label{2.0.5}
\begin{split}
	\widetilde{L^r}(0,\infty;\dot{B}_{p,q}^s(\mathbb{R}^2))&:=\left\{F:(0,\infty)\to \mathscr{S}'(\mathbb{R}^2)/\mathscr{P}(\mathbb{R}^2)\ ;\ \|F\|_{\widetilde{L^r}(0,\infty;\dot{B}_{p,q}^s)}<\infty\right\},\\
	\|F\|_{\widetilde{L^r}(0,\infty;\dot{B}_{p,q}^s)}&:=\left\|\left\{2^{sj}\|\Delta_jF\|_{L^r(0,\infty;L^p)}\right\}_{j\in\mathbb{Z}}\right\|_{l^{q}(\mathbb{Z})}.
\end{split}
\end{equation*}
Next, we introduce the Bony paraproduct formula. For two functions $f$ and $g$ on $\mathbb{R}^2$, we decompose the product $fg$ as
\begin{equation*}\label{2.0.6}
	fg=T_fg+R(f,g)+T_gf,
\end{equation*}
where
\begin{equation*}\label{2.0.7}
	T_fg:=\sum_{l\in \mathbb{Z}}S_lf\Delta_lg,\quad \quad R(f,g):=\sum_{l\in \mathbb{Z}}\sum_{|k-l|\leqslant2}\Delta_kf\Delta_lg.
\end{equation*}
For two vector fields $u=(u_1,u_2)$ and $v=(v_1,v_2)$ on $\mathbb{R}^2$, we write
\begin{equation*}\label{2.0.8}
	T_uv:=\sum_{m=1}^2\sum_{l\in \mathbb{Z}}S_lu_m\Delta_lv_m,\quad \quad R(u,v):=\sum_{m=1}^2\sum_{l\in \mathbb{Z}}\sum_{|k-l|\leqslant2}\Delta_ku_m\Delta_lv_m.
\end{equation*}
Then, we see that
\begin{equation*}
	u\cdot v=T_uv+R(u,v)+T_vu.
\end{equation*}
Under these notations, the following lemma holds:
\begin{lemm}[\cite{Bahouri}]\label{t2-1}\noindent
	\begin{itemize}
		\item [(1)] Let $1\leqslant p,p_1,p_2,q\leqslant \infty$ satisfy $1/p=1/p_1+1/p_2$ and let $s_1,s_2\in \mathbb{R}$ satisfy $s_1<0$. Then, there exists a constant $C=C(s_1,s_2,q)>0$ such that
		  \begin{equation*}
		    \|T_{f}g\|_{\dot{B}_{p,q}^{s_1+s_2}}\leqslant C\|f\|_{\dot{B}_{p_1,q}^{s_1}}\|g\|_{\dot{B}_{p_2,q}^{s_2}}
		  \end{equation*}
		  holds for all $f\in \dot{B}_{p_1,q}^{s_1}(\mathbb{R}^2)$ and $g\in \dot{B}_{p_2,q}^{s_2}(\mathbb{R}^2)$.\\[5pt]
		\item [(2)] Let $s_1,s_2\in \mathbb{R}$ satisfy $s_1+s_2>0$ and let $1\leqslant p,p_1,p_2,q,q_1,q_2\leqslant \infty$ satisfy $1/p=1/p_1+1/p_2$ and $1/q=1/q_1+1/q_2$. Then, there exists a constant $C=C(s_1,s_2)>0$ such that
	  \begin{equation*}
	    \|R(f,g)\|_{\dot{B}_{p,q}^{s_1+s_2}}\leqslant C\|f\|_{\dot{B}_{p_1,q_1}^{s_1}}\|g\|_{\dot{B}_{p_2,q_2}^{s_2}}
	  \end{equation*}
	  holds for all $f\in \dot{B}_{p_1,q_1}^{s_1}(\mathbb{R}^2)$ and $g\in \dot{B}_{p_2,q_2}^{s_2}(\mathbb{R}^2)$.
	\end{itemize}
\end{lemm}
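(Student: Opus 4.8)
The plan is to prove both inequalities by the standard three–step scheme for paraproduct estimates: spectral localization of each building block, H\"older's inequality in the space variable, and Young's convolution inequality for the frequency–index sequences; the point is that the sign hypotheses on $s_1$ (resp.\ on $s_1+s_2$) are precisely what produce the summable weight needed in the last step.

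For (1), I would first record that $\operatorname{supp}\widehat{S_lf}\subset\{|\xi|\leqslant 2^{l-2}\}$ and $\operatorname{supp}\widehat{\Delta_lg}\subset\{2^{l-1}\leqslant|\xi|\leqslant 2^{l+1}\}$, so that $\operatorname{supp}\widehat{S_lf\,\Delta_lg}$ lies in an annulus $\{2^{l-2}\leqslant|\xi|\leqslant 2^{l+2}\}$; hence there is a fixed $N_0$ with $\Delta_j(S_lf\,\Delta_lg)=0$ whenever $|j-l|>N_0$, and $\Delta_j(T_fg)=\sum_{|l-j|\leqslant N_0}\Delta_j(S_lf\,\Delta_lg)$. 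Using the $L^p$–boundedness of $\Delta_j$ and H\"older with $1/p=1/p_1+1/p_2$, $\|\Delta_j(T_fg)\|_{L^p}\lesssim\sum_{|l-j|\leqslant N_0}\|S_lf\|_{L^{p_1}}\|\Delta_lg\|_{L^{p_2}}$. I then bound $\|S_lf\|_{L^{p_1}}\leqslant\sum_{k\leqslant l-3}\|\Delta_kf\|_{L^{p_1}}$; writing $c_k:=2^{ks_1}\|\Delta_kf\|_{L^{p_1}}$ (so $\|(c_k)\|_{\ell^q}=\|f\|_{\dot B^{s_1}_{p_1,q}}$) one gets $2^{ls_1}\|S_lf\|_{L^{p_1}}\leqslant\sum_{k\leqslant l-3}2^{(l-k)s_1}c_k=(\chi\ast c)_l$ with $\chi_m:=2^{ms_1}\mathbf 1_{m\geqslant 3}$, which lies in $\ell^1(\mathbb Z)$ \emph{precisely because} $s_1<0$; hence $\{2^{ls_1}\|S_lf\|_{L^{p_1}}\}_l\in\ell^q$ with norm $\lesssim\|f\|_{\dot B^{s_1}_{p_1,q}}$ by Young's inequality for sequences. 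Since $|j-l|\leqslant N_0$ is bounded, $2^{j(s_1+s_2)}\|\Delta_j(T_fg)\|_{L^p}\lesssim\sum_{|l-j|\leqslant N_0}\bigl(2^{ls_1}\|S_lf\|_{L^{p_1}}\bigr)\bigl(2^{ls_2}\|\Delta_lg\|_{L^{p_2}}\bigr)$, and taking $\ell^q$ in $j$, shifting the finitely many indices by the triangle inequality, and using H\"older in the form $\ell^q\cdot\ell^\infty\subset\ell^q$ together with $\ell^q\hookrightarrow\ell^\infty$, yields the claim.

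For (2), write $R(f,g)=\sum_l\widetilde\Delta_lf\,\Delta_lg$ with $\widetilde\Delta_l:=\sum_{|k-l|\leqslant 2}\Delta_k$; the essential difference is that $\operatorname{supp}\widehat{\widetilde\Delta_lf\,\Delta_lg}$ is contained in a \emph{ball} $\{|\xi|\leqslant 2^{l+4}\}$, not in an annulus. Thus there is a fixed $N_1$ with $\Delta_j(\widetilde\Delta_lf\,\Delta_lg)=0$ for $l<j-N_1$, so $\Delta_jR(f,g)=\Delta_j\sum_{l\geqslant j-N_1}\widetilde\Delta_lf\,\Delta_lg$, and as before $\|\Delta_jR(f,g)\|_{L^p}\lesssim\sum_{l\geqslant j-N_1}\|\widetilde\Delta_lf\|_{L^{p_1}}\|\Delta_lg\|_{L^{p_2}}$. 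Setting $d_l:=2^{ls_1}\|\widetilde\Delta_lf\|_{L^{p_1}}$ (which is $\ell^{q_1}$–bounded by $\lesssim\|f\|_{\dot B^{s_1}_{p_1,q_1}}$ after absorbing the $\pm2$ fattening) and $e_l:=2^{ls_2}\|\Delta_lg\|_{L^{p_2}}\in\ell^{q_2}$, one obtains $2^{j(s_1+s_2)}\|\Delta_jR(f,g)\|_{L^p}\lesssim\sum_{l\geqslant j-N_1}2^{(j-l)(s_1+s_2)}d_le_l$; since $s_1+s_2>0$, this last expression is convolution of $(d_le_l)_l$ with a fixed $\ell^1(\mathbb Z)$ sequence, so Young's inequality and H\"older $\ell^{q_1}\cdot\ell^{q_2}\subset\ell^q$ (with $1/q=1/q_1+1/q_2$) give $\|R(f,g)\|_{\dot B^{s_1+s_2}_{p,q}}\lesssim\|(d_l)\|_{\ell^{q_1}}\|(e_l)\|_{\ell^{q_2}}\lesssim\|f\|_{\dot B^{s_1}_{p_1,q_1}}\|g\|_{\dot B^{s_2}_{p_2,q_2}}$.

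Everything else is routine Bernstein/H\"older/Young bookkeeping; the only genuinely substantive point — and the sole place the hypotheses are used — is the dichotomy in the spectral supports: $T_fg$ is localized to annuli, so the output frequency is comparable to the input frequency and one must sum the \emph{low} frequencies of $f$, which forces $s_1<0$; whereas $R(f,g)$ is localized only to balls, so the output frequency is dominated by the input frequency and one must sum the \emph{high} frequencies, which forces $s_1+s_2>0$. I would therefore be careful to state those two support computations precisely, since getting the direction of the frequency interaction right is the entire content of the lemma.
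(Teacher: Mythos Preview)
Your proof is correct and is exactly the standard argument one finds in the cited reference \cite{Bahouri}. Note that the paper itself does not give a proof of this lemma at all --- it is stated as a quotation from \cite{Bahouri} and used as a black box --- so there is no ``paper's own proof'' to compare against; your write-up simply fills in what the reference contains.
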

Using Lemma \ref{t2-1}, we obtain following three lemmas:
\begin{lemm}\label{t2-2}
  Let $2\leqslant p\leqslant 4$ and let $1\leqslant q\leqslant \infty$. Let $s_1,s_2<2(2/p-1/2)$ satisfy $s_1+s_2>0$. Then, there exists a constant $C=C(p,q,s_1,s_2)>0$ such that
  \begin{equation*}
    \|fg\|_{\dot{B}_{2,q}^{s_1+s_2-2(2/p-1/2)}}\leqslant C\|f\|_{\dot{B}_{p,q}^{s_1}}\|g\|_{\dot{B}_{p,q}^{s_2}}
  \end{equation*}
  holds for all $f\in \dot{B}_{p,q}^{s_1}(\mathbb{R}^2)$ and $g\in \dot{B}_{p,q}^{s_2}(\mathbb{R}^2)$.
\end{lemm}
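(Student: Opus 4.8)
Write $\delta:=2(2/p-1/2)=4/p-1$, so that the hypotheses read $s_1,s_2<\delta$ and $s_1+s_2>0$; note that $\delta\leqslant 2/p$ (because $p\geqslant 2$) and that $p/2\leqslant 2$ (because $p\leqslant 4$). The plan is to apply Bony's decomposition $fg=T_fg+R(f,g)+T_gf$ and to bound each of the three pieces in a homogeneous Besov space $\dot B^{\sigma}_{\bar p,q}$ for some $1\leqslant\bar p\leqslant 2$; in every case a direct computation will show that the Bernstein embedding $\dot B^{\sigma}_{\bar p,q}\hookrightarrow\dot B^{\sigma-2(1/\bar p-1/2)}_{2,q}$, which is available since $\bar p\leqslant 2$, lands exactly in the target space $\dot B^{s_1+s_2-\delta}_{2,q}$. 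Since the hypotheses are symmetric under the exchange $(f,s_1)\leftrightarrow(g,s_2)$, it suffices to estimate $R(f,g)$ and $T_fg$.

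For the resonant term I would apply Lemma~\ref{t2-1}(2) with $p_1=p_2=p$ (so $\bar p=p/2\leqslant2$) and $q_1=q$, $q_2=\infty$; this is admissible precisely because $s_1+s_2>0$, and combined with $\dot B^{s_2}_{p,q}\hookrightarrow\dot B^{s_2}_{p,\infty}$ it gives $\|R(f,g)\|_{\dot B^{s_1+s_2}_{p/2,q}}\lesssim\|f\|_{\dot B^{s_1}_{p,q}}\|g\|_{\dot B^{s_2}_{p,q}}$, after which the embedding into $\dot B^{s_1+s_2-\delta}_{2,q}$ finishes this piece. For $T_fg$ in the range $s_1<0$, Lemma~\ref{t2-1}(1) with $p_1=p_2=p$ applies directly and yields $\|T_fg\|_{\dot B^{s_1+s_2}_{p/2,q}}\lesssim\|f\|_{\dot B^{s_1}_{p,q}}\|g\|_{\dot B^{s_2}_{p,q}}$, and we embed as before.

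The remaining, and delicate, range is $0\leqslant s_1<\delta$, which can occur only when $p<4$ (for $p=4$ the hypothesis forces $s_1<0$) and for which Lemma~\ref{t2-1}(1) is not available. Here I would argue directly on the Littlewood--Paley pieces. Choose an exponent $p_1$ with $\tfrac{1}{2}-\tfrac{1}{p}\leqslant\tfrac{1}{p_1}<\tfrac{1}{p}-\tfrac{s_1}{2}$; such a $p_1\in[2,\infty)$ exists \emph{exactly because} $s_1<\delta$, and it satisfies $p\leqslant p_1$ while the index $\bar p$ defined by $1/\bar p=1/p_1+1/p$ obeys $\bar p\leqslant2$. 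Bernstein's inequality gives $\|\Delta_kf\|_{L^{p_1}}\lesssim 2^{2k(1/p-1/p_1)}\|\Delta_kf\|_{L^p}$, and since $2(1/p-1/p_1)-s_1>0$ a geometric summation over $k\leqslant l-3$ yields $\|S_lf\|_{L^{p_1}}\lesssim 2^{l(2(1/p-1/p_1)-s_1)}\|f\|_{\dot B^{s_1}_{p,q}}$. As $\Delta_l(T_fg)$ is spectrally supported in an annulus of size $2^l$ (thanks to the index shift $3$ in the definition of $S_l$), H\"older's inequality gives $\|\Delta_l(T_fg)\|_{L^{\bar p}}\lesssim\|S_lf\|_{L^{p_1}}\|\Delta_lg\|_{L^p}$, and summing against the $\ell^q$ weights produces $\|T_fg\|_{\dot B^{s_1+s_2-2/p+2/p_1}_{\bar p,q}}\lesssim\|f\|_{\dot B^{s_1}_{p,q}}\|g\|_{\dot B^{s_2}_{p,q}}$. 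A final Bernstein embedding $\dot B^{s_1+s_2-2/p+2/p_1}_{\bar p,q}\hookrightarrow\dot B^{s_1+s_2-\delta}_{2,q}$ completes the estimate, the identity $s_1+s_2-2/p+2/p_1-2(1/\bar p-1/2)=s_1+s_2-\delta$ holding because the term $2/p_1$ cancels; the same argument with $f$ and $g$ interchanged handles $T_gf$ in its corresponding subcases.

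Summing the three contributions gives the lemma with a constant depending only on $p,q,s_1,s_2$ (the choice of $p_1$ being fixed in terms of these). The main obstacle is precisely the case $0\leqslant s_i<\delta$: one cannot feed a nonnegative-regularity low-frequency factor into the paraproduct bound of Lemma~\ref{t2-1}(1), and simply placing $S_lf$ in $L^\infty$ leaves the high-frequency factor in $L^p$, which cannot be embedded into an $L^2$-based Besov space when $p>2$. The resolution is the freedom to take the intermediate exponent $p_1$ strictly below the Sobolev-critical value $2/(2/p-s_1)$ while still keeping $\bar p\leqslant2$, and checking that the corresponding window for $p_1$ is nonempty is exactly where the sharp condition $s_i<2(2/p-1/2)$ is used; the condition $p\leqslant4$ is used by the embeddings out of $\dot B_{p/2,q}$, and $s_1+s_2>0$ by the resonant term.
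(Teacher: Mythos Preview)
Your proof is correct: the Bony decomposition followed by paraproduct/resonant estimates and Besov embeddings is exactly the paper's strategy, and your treatment of $R(f,g)$ matches the paper's. The difference lies in the paraproduct terms. You split $T_fg$ into the cases $s_1<0$ (where Lemma~\ref{t2-1}(1) applies directly with $p_1=p_2=p$) and $0\leqslant s_1<\delta$ (where you run a hands-on Littlewood--Paley argument with an auxiliary exponent $p_1$). The paper avoids this case split altogether: it first embeds $\dot B^{s_1}_{p,q}\hookrightarrow\dot B^{\,s_1-\delta}_{p^*,q}$, where $1/p^*=1/2-1/p$, so that the low-frequency regularity $s_1-\delta$ is \emph{automatically} negative by the hypothesis $s_1<\delta$, and then applies Lemma~\ref{t2-1}(1) once with exponents $(p^*,p)$ (note $1/p^*+1/p=1/2$, so the output already lives in $\dot B^{\,s_1+s_2-\delta}_{2,q}$ and no further embedding is needed). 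Your direct argument is really this same manoeuvre unpacked---indeed, taking $1/p_1=1/2-1/p$ at the left endpoint of your admissible window gives $p_1=p^*$ and $\bar p=2$, recovering the paper's choice exactly---but the paper's formulation is cleaner and shows that the ``delicate'' case you flagged is not a genuine obstacle once one shifts integrability \emph{before} invoking the paraproduct bound rather than after.
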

\begin{proof}
	By the Bony paraproduct formula, it holds
	\begin{equation}\label{2.1}
		\begin{split}
			\|fg\|_{\dot{B}_{2,q}^{s_1+s_2-2(2/p-1/2)}}
			\leqslant &\|T_fg\|_{\dot{B}_{2,q}^{s_1+s_2-2(2/p-1/2)}}+\|R(f,g)\|_{\dot{B}_{2,q}^{s_1+s_2-2(2/p-1/2)}}\\
								&\quad +\|T_gf\|_{\dot{B}_{2,q}^{s_1+s_2-2(2/p-1/2)}}.
		\end{split}
 	\end{equation}
	Let $p^*$ satisfy $1/p^*=1/2-1/p$. It follows from Lemma \ref{t2-1} (1), $s_1<2(2/p-1/2)$ and the continuous embedding $\dot{B}_{p,q}^{s_1}(\mathbb{R}^2)\hookrightarrow \dot{B}_{p^*,q}^{s_1-2(2/p-1/2)}(\mathbb{R}^2)$ that
	\begin{equation}\label{2.2}
		\begin{split}
			\|T_fg\|_{\dot{B}_{2,q}^{s_1+s_2-2(2/p-1/2)}}
			&\leqslant C\|f\|_{\dot{B}_{p^*,q}^{s_1-2(2/p-1/2)}}\|g\|_{\dot{B}_{p,q}^{s_2}}\\
			&\leqslant C\|f\|_{\dot{B}_{p,q}^{s_1}}\|g\|_{\dot{B}_{p,q}^{s_2}}.
		\end{split}
	\end{equation}
	Similarly, by $s_2<2(2/p-1/2)$ it holds
	\begin{equation}\label{2.3}
		\|T_gf\|_{\dot{B}_{2,q}^{s_1+s_2-2(2/p-1/2)}}
		\leqslant C\|f\|_{\dot{B}_{p,q}^{s_1}}\|g\|_{\dot{B}_{p,q}^{s_2}}.
	\end{equation}
	On the other hand, from the continuous embedding $\dot{B}_{p/2,q}^{s_1+s_2}(\mathbb{R}^2)\hookrightarrow \dot{B}_{2,q}^{s_1+s_2-2(2/p-1/2)}(\mathbb{R}^2)$, Lemma \ref{t2-1} (2) and $s_1+s_2>0$, it follows
	\begin{equation}\label{2.4}
		\|R(f,g)\|_{\dot{B}_{2,q}^{s_1+s_2-2(2/p-1/2)}}
		\leqslant C\|R(f,g)\|_{\dot{B}_{p/2,q}^{s_1+s_2}}
		\leqslant C\|f\|_{\dot{B}_{p,q}^{s_1}}\|g\|_{\dot{B}_{p,q}^{s_2}}.
	\end{equation}
	Combining estimates (\ref{2.1})-(\ref{2.4}) completes the proof.
\end{proof}
\begin{lemm}\label{t2-3}
  Let $2\leqslant p\leqslant 4$ and let $2/p<s<4/p$. Then, there exists a constant $C=C(p,s)>0$ such that
  \begin{equation*}
    \|v\cdot \nabla f\|_{\dot{H}^{2s-4/p}}
		\leqslant C(\|v\|_{\dot{B}_{p,2}^{s-1}}\|f\|_{\dot{B}_{p,2}^{s+1}}+\|v\|_{\dot{B}_{p,2}^{s}}\|f\|_{\dot{B}_{p,2}^{s}})
  \end{equation*}
  holds for all $v\in (\dot{B}_{p,2}^{s-1}(\mathbb{R}^2)\cap\dot{B}_{p,2}^{s}(\mathbb{R}^2))^2$ and $f\in \dot{B}_{p,2}^{s}(\mathbb{R}^2)\cap\dot{B}_{p,2}^{s+1}(\mathbb{R}^2)$.
\end{lemm}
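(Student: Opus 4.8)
The plan is to run the Bony paraproduct decomposition on the product $v\cdot\nabla f=\sum_m v_m\partial_m f$, namely
\begin{equation*}
  v\cdot\nabla f=T_v(\nabla f)+R(v,\nabla f)+T_{\nabla f}(v),
\end{equation*}
and to estimate the three pieces separately in $\dot{H}^{2s-4/p}=\dot{B}_{2,2}^{2s-4/p}$, using Lemma \ref{t2-1} together with the Besov embedding $\dot{B}_{a,q}^{\sigma}(\mathbb{R}^2)\hookrightarrow\dot{B}_{b,q}^{\sigma-2(1/a-1/b)}(\mathbb{R}^2)$ for $a\leqslant b$. Throughout I would keep in mind the identity $2(2/p-1/2)=4/p-1$ and introduce the index $p^*$ with $1/p^*=1/2-1/p$ (admissible since $2\leqslant p\leqslant 4$), for which the embedding $\dot{B}_{p,2}^{s-1}\hookrightarrow\dot{B}_{p^*,2}^{s-4/p}$ holds. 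Note $s-4/p<0$ by hypothesis and $2s-1>0$ since $s>2/p\geqslant 1/2$; these are the two inequalities that make the relevant parts of Lemma \ref{t2-1} applicable.

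For the two paraproduct terms the guiding principle is to put the factor carrying the larger regularity into the \emph{high-frequency} slot, where Lemma \ref{t2-1}(1) imposes no upper bound on the regularity. For $T_v(\nabla f)=\sum_m T_{v_m}(\partial_m f)$, I would first embed $v_m\in\dot{B}_{p,2}^{s-1}$ into $\dot{B}_{p^*,2}^{s-4/p}$ (negative regularity, using $s<4/p$) and then apply Lemma \ref{t2-1}(1) with the H\"older splitting $1/2=1/p^*+1/p$ against $\partial_m f\in\dot{B}_{p,2}^{s}$ (which holds because $f\in\dot{B}_{p,2}^{s+1}$); this yields the bound $C\|v\|_{\dot{B}_{p,2}^{s-1}}\|f\|_{\dot{B}_{p,2}^{s+1}}$. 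For $T_{\nabla f}(v)=\sum_m T_{\partial_m f}(v_m)$, I would instead embed $\partial_m f\in\dot{B}_{p,2}^{s-1}$ (coming from $f\in\dot{B}_{p,2}^{s}$) into $\dot{B}_{p^*,2}^{s-4/p}$ and pair it with $v_m\in\dot{B}_{p,2}^{s}$, obtaining the bound $C\|v\|_{\dot{B}_{p,2}^{s}}\|f\|_{\dot{B}_{p,2}^{s}}$. For the remainder $R(v,\nabla f)=\sum_m R(v_m,\partial_m f)$ I would use Lemma \ref{t2-1}(2) in $\dot{B}_{p/2,2}^{2s-1}$ with third indices $\infty$ and $2$ (whose sum gives $2$), using $v_m\in\dot{B}_{p,2}^{s-1}\hookrightarrow\dot{B}_{p,\infty}^{s-1}$ and $\partial_m f\in\dot{B}_{p,2}^{s}$; the hypothesis $s_1+s_2=2s-1>0$ is what is needed here. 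Then the embedding $\dot{B}_{p/2,2}^{2s-1}\hookrightarrow\dot{B}_{2,2}^{2s-4/p}$ (valid since $p/2\leqslant 2$) lands the remainder in $\dot{H}^{2s-4/p}$ with bound $C\|v\|_{\dot{B}_{p,2}^{s-1}}\|f\|_{\dot{B}_{p,2}^{s+1}}$. Summing the three estimates gives exactly the right-hand side of the claim.

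The main obstacle is not analytic but a matter of exponent bookkeeping, and it is the reason one cannot simply quote the product estimate Lemma \ref{t2-2}: if one wrote $2s-4/p=\sigma_1+\sigma_2-(4/p-1)$ with $\sigma_1,\sigma_2$ being regularities of the \emph{same} fixed factors $v$ and $\nabla f$, then Lemma \ref{t2-2} would require $\sigma_1,\sigma_2<4/p-1$, forcing $s<4/p-1/2$, which is strictly stronger than the assumed $s<4/p$. The decomposition above avoids this by exploiting that $f$ is assumed to lie in $\dot{B}_{p,2}^{s+1}$ as well as $\dot{B}_{p,2}^{s}$ (and $v$ in $\dot{B}_{p,2}^{s}$ as well as $\dot{B}_{p,2}^{s-1}$): in each paraproduct term one loads the extra-regular factor into the high-frequency position and keeps the low-frequency factor at regularity $s-1$, which becomes negative after the $L^{p^*}$-embedding precisely because $s<4/p$. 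Choosing correctly, for each of the three pieces, which factor occupies the high- and which the low-frequency slot is the only delicate point; the rest is routine Littlewood--Paley calculus already encapsulated in Lemma \ref{t2-1}.
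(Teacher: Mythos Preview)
Your proof is correct and follows essentially the same route as the paper: Bony decomposition of $v\cdot\nabla f$, then Lemma~\ref{t2-1}(1) with the $1/2=1/p^*+1/p$ splitting for the two paraproducts (embedding the low-frequency factor from $\dot{B}_{p,2}^{s-1}$ into $\dot{B}_{p^*,2}^{s-4/p}$, using $s<4/p$), and Lemma~\ref{t2-1}(2) plus an embedding for the remainder. The only cosmetic difference is in the remainder: the paper applies Lemma~\ref{t2-1}(2) directly in $\dot H^{2s-4/p}$ with the pair $(p^*,p)$ and regularities $(s-4/p+1,\,s-1)$, invoking $2s-4/p>0$ and landing the bound in the $\|v\|_{\dot{B}_{p,2}^{s}}\|f\|_{\dot{B}_{p,2}^{s}}$ term, whereas you work in $\dot{B}_{p/2,2}^{2s-1}$ with regularities $(s-1,s)$ and then embed into $\dot H^{2s-4/p}$, landing instead in the $\|v\|_{\dot{B}_{p,2}^{s-1}}\|f\|_{\dot{B}_{p,2}^{s+1}}$ term; both are valid and yield the stated inequality.
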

\begin{proof}
	The Bony paraproduct formula yields that
	\begin{equation}\label{2.5}
		\|v\cdot \nabla f\|_{\dot{H}^{2s-4/p}}
		\leqslant \|T_v\nabla f\|_{\dot{H}^{2s-4/p}}+\|R(v,\nabla f)\|_{\dot{H}^{2s-4/p}}+\|T_{\nabla f}v\|_{\dot{H}^{2s-4/p}}.
	\end{equation}
	Let $1/p^*=1/2-1/p$. By $s<4/p=1+2(2/p-1/2)$ and Lemma \ref{t2-1}, we see that
	\begin{equation}\label{2.6}
		\|T_v\nabla f\|_{\dot{H}^{2s-4/p}}
		\leqslant C\|v\|_{\dot{B}_{p^*,2}^{s-1-2(2/p-1/2)}}\|\nabla f\|_{\dot{B}_{p,2}^{s}}
		\leqslant C\|v\|_{\dot{B}_{p,2}^{s-1}}\|f\|_{\dot{B}_{p,2}^{s+1}},
	\end{equation}
	and
	\begin{equation}\label{2.7}
		\|T_{\nabla f}v\|_{\dot{H}^{2s-4/p}}
		\leqslant C\|\nabla f\|_{\dot{B}_{p^*,2}^{s-1-2(2/p-1/2)}}\|v\|_{\dot{B}_{p,2}^{s}}
		\leqslant C\|v\|_{\dot{B}_{p,2}^s}\|f\|_{\dot{B}_{p,2}^s}.
	\end{equation}
	It follows from $2s-4/p>0$ and Lemma \ref{t2-1} (2) that
	\begin{equation}\label{2.8}
		\|R(v,\nabla f)\|_{\dot{H}^{2s-4/p}}
		 \leqslant C\|v\|_{\dot{B}_{p^*,2}^{s-2(2/p-1/2)}}\|\nabla f\|_{\dot{B}_{p,2}^{s-1}}
		 \leqslant C\|v\|_{\dot{B}_{p,2}^s}\|f\|_{\dot{B}_{p,2}^s}.
	\end{equation}
	By (\ref{2.5})-(\ref{2.8}), we complete the proof.
\end{proof}

In order to control the nonlinear term of (\ref{1.1}), we introduce a commutator estimate:
\begin{lemm}\label{t2-4}
  Let $2\leqslant p\leqslant 4$ and let $s_1,s_2\in \mathbb{R}$ satisfy $2(2/p-1/2)<s_1<1+2(2/p-1/2)$ and $s_2<2(2/p-1/2)$ and $s_1+s_2>0$. Then, there exists a constant $C=C(p,s_1,s_2)>0$ such that
  \begin{equation*}
    \left\{\sum_{j\in \mathbb{Z}}\left(2^{\left(s_1+s_2-2\left(\frac{2}{p}-\frac{1}{2}\right)\right)j}\|[f,\Delta_j]g\|_{L^2}\right)^2\right\}^{1/2}\leqslant C\|f\|_{\dot{B}_{p,2}^{s_1}}\|g\|_{\dot{B}_{p,2}^{s_2}}
  \end{equation*}
  holds for all $f\in \dot{B}_{p,2}^{s_1}(\mathbb{R}^2)$ and $g\in \dot{B}_{p,2}^{s_2}(\mathbb{R}^2)$.
\end{lemm}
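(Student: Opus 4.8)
The plan is to reduce the commutator estimate to the Bony paraproduct decomposition and reuse the bilinear structure already developed in Lemmas \ref{t2-1}--\ref{t2-3}. Write $[f,\Delta_j]g = f\Delta_j g - \Delta_j(fg)$ and decompose $f = \sum_k \Delta_k f$ dyadically; the standard reduction is to split the sum over $k$ according to whether $k \ll j$, $k \sim j$, or $k \gg j$, which corresponds precisely to the three pieces $T_f(\Delta_j g)$ versus $\Delta_j(T_f g)$, the remainder terms, and the high-frequency paraproduct $T_{\Delta_j g}f$ type contributions. The term that genuinely uses the commutator cancellation is the ``low-high'' piece $[S_{k}f, \Delta_j]\Delta_k g$ with $|k-j|$ bounded, where one invokes the first-order Taylor/mean-value trick: since $\Delta_j$ has kernel $2^{2j}\check\varphi(2^j\cdot)$ with vanishing integral times a first moment bound, one gets the gain
\begin{equation*}
  \|[S_k f,\Delta_j]\Delta_k g\|_{L^2} \leqslant C 2^{-j}\|\nabla S_k f\|_{L^{p^*}}\|\Delta_k g\|_{L^p},
\end{equation*}
where $1/p^* = 1/2 - 1/p$, and the extra $2^{-j}$ is exactly what makes the target regularity $s_1+s_2-2(2/p-1/2)$ (rather than $s_1+s_2+1-2(2/p-1/2)$) attainable.

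The key steps, in order, would be: (i) set up the dyadic splitting $[f,\Delta_j]g = \sum_{k} [\Delta_k f,\Delta_j]\Delta_j g + (\text{terms where only one of }\Delta_k f, \Delta_l g \text{ is active})$, more cleanly organized via $fg = T_f g + T_g f + R(f,g)$ and tracking how $\Delta_j$ interacts with each; (ii) for the ``paraproduct'' piece $T_f(\Delta_j g) - \Delta_j(T_f g)$, localize in frequency so that only $S_{j+N_0} f$ (finitely many shells around $2^j$) contributes, then apply the mean-value commutator bound above together with the Bernstein inequality $\|\nabla S_k f\|_{L^{p^*}} \lesssim \sum_{k'\leqslant k} 2^{k'}2^{2k'(1/p-1/p^*)}\|\Delta_{k'}f\|_{L^p} = \sum_{k'\leqslant k}2^{k'(2-2(2/p-1/2))}\cdots$ wait, more simply $2^{k'}2^{k'\cdot 2(2/p-1/2)}\|\Delta_{k'}f\|_{L^p}$; the condition $s_1 < 1 + 2(2/p-1/2)$ is precisely what makes $\sum_{k'\leqslant j} 2^{(1+2(2/p-1/2)-s_1)k'}$ summable in a Young-convolution sense against $\{2^{s_1 k'}\|\Delta_{k'}f\|_{L^p}\}\in\ell^2$; (iii) for the remainder piece $R(f,g)$ and the ``high-low'' piece $T_{\Delta_j g} f$, no cancellation is needed — here $\Delta_j(fg)$ and $f\Delta_j g$ are estimated separately by Hölder plus Bernstein, using $s_1 > 2(2/p-1/2)$ (so $f \in L^\infty$-type control is available after embedding $\dot B_{p,2}^{s_1}\hookrightarrow \dot B_{\infty,2}^{s_1 - 2/p}$) and $s_1 + s_2 > 0$ for the remainder's $\ell^2$-summation, exactly as in the proof of Lemma \ref{t2-2}; (iv) reassemble the $\ell^2_j$ sums, using Young's inequality for the convolution in the dyadic index and Cauchy--Schwarz/Minkowski to land in $\ell^2$.

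The main obstacle, and the only place where the precise hypotheses are delicate, is step (ii): one must verify that after the commutator gain of $2^{-j}$ and the Bernstein loss $2^{k'(1 + 2(2/p-1/2))}$ on the frequency-$2^{k'}$ block of $f$, the weight that multiplies $2^{s_1 k'}\|\Delta_{k'}f\|_{L^p}$ summed over $k' \lesssim j$ produces a sequence in $\ell^1$ of the index difference — this needs $s_1 < 1 + 2(2/p-1/2)$ — while simultaneously the output weight $2^{(s_1+s_2-2(2/p-1/2))j}$ matched against $2^{(s_2 - 2(2/p-1/2))j}\|\Delta_j g\|_{L^p}\cdot 2^{2(2/p-1/2)j}$... i.e. one needs the $L^p\times L^{p^*}\to L^2$ Hölder pairing to be compatible with turning $\dot B_{p,2}$ norms on both factors into the claimed $\dot B^{s_1}_{p,2}\times\dot B^{s_2}_{p,2}$ bound; the constraint $s_2 < 2(2/p-1/2)$ enters here to control the ``$k' \sim j$, both factors at the same scale'' sub-case and to make the high-frequency tail of $g$ harmless. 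I would handle all three regimes by writing each as a discrete convolution $\{a_j\}\ast\{b_j\}$ with $\{a_j\}\in\ell^1$ (guaranteed by the strict inequalities on $s_1,s_2$) and $\{b_j\}\in\ell^2$, so that Young's inequality closes the estimate; the role of $2\leqslant p\leqslant 4$ is to ensure $p^* \geqslant 2$ so the embedding $\dot B^{s_1}_{p,2}\hookrightarrow \dot B^{s_1-2(2/p-1/2)}_{p^*,2}$ used throughout is valid.
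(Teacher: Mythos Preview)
Your proposal is correct and follows essentially the same route as the paper: the five-piece Bony decomposition $[f,\Delta_j]g = [T_f,\Delta_j]g + R(f,\Delta_jg) + T_{\Delta_jg}f - \Delta_jR(f,g) - \Delta_jT_gf$, the mean-value commutator bound $\|[T_f,\Delta_j]g\|_{L^2}\lesssim 2^{-j}\|\nabla S_kf\|_{L^{p^*}}\|\Delta_kg\|_{L^p}$ on the first piece, and direct H\"older/Bernstein estimates (via Lemma~\ref{t2-1}) on the remaining four, with the hypotheses $s_1<1+2(2/p-1/2)$, $s_1>2(2/p-1/2)$, $s_1+s_2>0$, and $s_2<2(2/p-1/2)$ entering exactly where you indicate. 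The only cosmetic difference is that the paper handles $\Delta_jR(f,g)$ and $\Delta_jT_gf$ in one stroke by quoting Lemma~\ref{t2-1} directly, rather than unpacking them as you sketch in step (iii).
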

\begin{proof} The proof is based on the argument in (\cite{Miura} Section 3, Proposition 2).
	Let us decompose $[f,\Delta_j]g$ as follows:
	\begin{equation}\label{2.9}
		[f,\Delta_j]g
		=[T_f,\Delta_j]g+R(f,\Delta_jg)+T_{\Delta_jg}f-\Delta_jR(f,g)-\Delta_jT_gf.
	\end{equation}
	Since it holds
	\begin{equation*}\label{2.10}
		[T_f,\Delta_j]g=2^{-j}\sum_{|k-j|\leqslant3}\int_{\mathbb{R}^2}\varphi_0(y)\int_0^1y\cdot S_k\nabla f(x-2^{-j}\tau y)d\tau\Delta_kg(x-2^{-j}y)dy,
	\end{equation*}
	we have
	\begin{equation}\label{2.11}
		\begin{split}
			\|[T_f,\Delta_j]g\|_{L^2}
			&\leqslant 2^{-j}\sum_{|k-j|\leqslant3}\int_{\mathbb{R}^2}|y|\cdot|\varphi_0(y)|dy\cdot \|S_k\nabla f\|_{L^{p^*}}\|\Delta_kg\|_{L^p}\\
			&\leqslant C2^{-j}\sum_{|k-j|\leqslant3}\sum_{l\leqslant k-3}2^{(1-s')l}2^{(s'-1)l}\|\Delta_l\nabla f\|_{L^{p^*}}\|\Delta_kg\|_{L^p}\\
			&\leqslant C2^{-j}\sum_{|k-j|\leqslant3}\left\{\sum_{l\leqslant k-3}2^{2(1-s')l}\right\}^{\frac{1}{2}}\|\nabla f\|_{\dot{B}_{p^*,2}^{s'-1}}\|\Delta_kg\|_{L^p}\\
			&\leqslant C2^{-s'j}\|f\|_{\dot{B}_{p,2}^{s'+2(2/p-1/2)}}\sum_{|k-j|\leqslant3}\|\Delta_kg\|_{L^p}\\
			&= C2^{\left(-s_1+\left(\frac{2}{p}-\frac{1}{2}\right)\right)j}\|f\|_{\dot{B}_{p,2}^{s_1}}\sum_{|k-j|\leqslant3}\|\Delta_kg\|_{L^p}\\
			&\leqslant C2^{-\left(s_1+s_2-2\left(\frac{2}{p}-\frac{1}{2}\right)\right)j}\|f\|_{\dot{B}_{p,2}^{s_1}}\sum_{|k-j|\leqslant3}2^{s_2k}\|\Delta_kg\|_{L^p} ,
		\end{split}
	\end{equation}
	where $s'=s_1-2(2/p-1/2)<1$ and $1/p^*=1/2-1/p$. Next, as we see that
	\begin{equation*}
		R(f,\Delta_jg)=\sum_{|k-j|\leqslant 2}\Delta_k'f\Delta_k\Delta_jg,\quad \Delta_k':=\sum_{|i-k|\leqslant 2}\Delta_i,
	\end{equation*}
	we obtain
	\begin{equation}\label{2.12}
		\begin{split}
			\|R(f,g)\|_{L^2}
			&\leqslant \sum_{|k-j|\leqslant 2}\|\Delta_k'f\|_{L^{p^*}}\|\Delta_k\Delta_jg\|_{L^p}\\
			&\leqslant C2^{-\left(s_1+s_2-2\left(\frac{2}{p}-\frac{1}{2}\right)\right)j}\sum_{|k-j|\leqslant2}\left(2^{\left(s_1+s_2-2\left(\frac{2}{p}-\frac{1}{2}\right)\right)k}\|\Delta_k'f\|_{L^{p^*}}\right)2^{s_2j}\|\Delta_jg\|_{L^p}\\
			&\leqslant C2^{-\left(s_1+s_2-2\left(\frac{2}{p}-\frac{1}{2}\right)\right)j}\sum_{|k-j|\leqslant2}\left(2^{s_1k}\|\Delta_k'f\|_{L^p}\right)2^{s_2j}\|\Delta_jg\|_{L^p}.
		\end{split}
	\end{equation}
	Since
	\begin{equation*}
		T_{\Delta_jg}f=\sum_{k\geqslant j-2}S_k(\Delta_{j}g)\Delta_kf,
	\end{equation*}
	we have
	\begin{equation}\label{2.13}
		\begin{split}
			\|T_{\Delta_jg}f\|_{L^2}
			&\leqslant \sum_{k\geqslant j-2}\|S_k(\Delta_jg)\|_{L^p}\|\Delta_kf\|_{L^{p^*}}\\
			&\leqslant C\|\Delta_jg\|_{L^p}\sum_{k\geqslant j-2}\|\Delta_kf\|_{L^{p^*}}\\
			&\leqslant C\|\Delta_jg\|_{L^p}\left(\sum_{k\geqslant j-2}2^{-2k\left(s_1-2\left(\frac{2}{p}-\frac{1}{2}\right)\right)}\right)^{\frac{1}{2}}\|f\|_{\dot{B}_{p^*,2}^{s-2(2/p-1/2)}}\\
			&\leqslant C2^{-j\left(s_1-2\left(\frac{2}{p}-\frac{1}{2}\right)\right)}\|\Delta_jg\|_{L^p}\|f\|_{\dot{B}_{p,2}^s}.
		\end{split}
	\end{equation}
	It follows from Lemma \ref{t2-1} that
	\begin{equation}\label{2.14}
		\begin{split}
			&\left\{\sum_{j\in\mathbb{Z}}2^{2\left(s_1+s_2-2\left(\frac{2}{p}-\frac{1}{2}\right)\right)j}\left(\|\Delta_jR(f,g)\|_{L^2}+\|\Delta_jT_gf\|_{L^2}\right)^2\right\}^{\frac{1}{2}}\\
			&\quad\leqslant \|R(f,g)\|_{\dot{B}_{p,2}^{s_1+s_2-2(2/p-1/2)}}+\|T_gf\|_{\dot{B}_{p,2}^{s_1+s_2-2(2/p-1/2)}}\\
			&\quad\leqslant C\|f\|_{\dot{B}_{p,2}^{s_1}}\|g\|_{\dot{B}_{p,2}^{s_2}}.
		\end{split}
	\end{equation}
	Combining (\ref{2.9})-(\ref{2.14}) completes the proof.
\end{proof}
\section{Linear estimates}
In this section, we consider estimates for the solution to the linearized equation corresponding to (\ref{1.1}):
\begin{equation}\label{3.1}
	 \begin{cases}
	      \partial_t \widetilde{\theta}^A+\kappa(-\Delta)^{\frac{\alpha}{2}}\widetilde{\theta}^A+A\mathcal{R}_1\widetilde{\theta}^A=0 \quad  & \quad t>0, x\in \mathbb{R}^2,\\
	      \widetilde{\theta}^A(0,x)=\theta_0(x) \quad & \quad x\in \mathbb{R}^2.
	 \end{cases}
\end{equation}
Applying the Fourier transform to (\ref{3.1}), we have
\begin{equation*}
	\partial_t\widehat{\widetilde{\theta}^A}(t,\xi)+\kappa|\xi|^{\alpha}\widehat{\widetilde{\theta}^A}(t,\xi)+A \frac{i\xi_1}{|\xi|}\widehat{\widetilde{\theta}^A}(t,\xi)=0,
\end{equation*}
with $\widehat{\widetilde{\theta}^A}(0,\xi)=\widehat{\theta_0}(\xi)$. Solving this ordinary differential equation, we see that
\begin{equation*}
	\widehat{\widetilde{\theta}^A}(t,\xi)=e^{-\kappa t|\xi|^{\alpha}}e^{-At\frac{i\xi_1}{|\xi|}}\widehat{\theta_0}(\xi).
\end{equation*}
This implies
\begin{equation}\label{3.2}
	\widetilde{\theta}^A(t,x)=T_A(t)\theta_0(x),
\end{equation}
where the opretator $T_A(t)$ is defined by
\begin{equation}\label{3.3}
	T_A(t)f(x):=e^{\kappa t(-\Delta)^{\frac{\alpha}{2}}}e^{-At\mathcal{R}_1}f(x)
	=\frac{1}{(2\pi)^2}\int_{\mathbb{R}^2}e^{ix\cdot\xi}e^{-\kappa t|\xi|^{\alpha}}e^{-At\frac{i\xi_1}{|\xi|}}\widehat{f}(\xi)\ d\xi.
\end{equation}
Hence, our interest is to derive estimates for the operator $T_A(t)$ and we obtain the following Strichartz type estimate:
\begin{prop}\label{t3-1}
Let $0<\alpha \leqslant 2$, $2<p,r<\infty$ satisfy
\begin{equation}\label{3.4}
	\frac{1}{\alpha}\left(1-\frac{2}{p}\right)\leqslant \frac{1}{r}< \left(\frac{1}{\alpha}+\frac{1}{4}\right)\left(1-\frac{2}{p}\right).
\end{equation}
Then, there exists a constant $C=C(\alpha,p,r)>0$ such that
\begin{equation}\label{3.5}
	\|T_A(\cdot)f\|_{\widetilde{L^r}(0,\infty;\dot{B}_{p,q}^s)}\leqslant C\kappa^{-\frac{1}{\alpha}\left(1-\frac{2}{p}\right)}|A|^{\frac{1}{\alpha}\left(1-\frac{2}{p}\right)-\frac{1}{r}}\|f\|_{\dot{B}_{2,q}^{s}}
\end{equation}
holds for all $A\in \mathbb{R}\setminus\{0\}$, $1\leqslant q\leqslant \infty$, $s\in \mathbb{R}$ and $f\in \dot{B}_{2,q}^s(\mathbb{R}^2)$.
\end{prop}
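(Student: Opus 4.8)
The strategy is to reduce the proposition to a single Littlewood--Paley block, invoke the known dispersive decay of the propagator generated by $A\mathcal{R}_1$, and then close a non-endpoint $TT^{*}$ argument in which the time decay coming from dispersion and the exponential-in-time decay coming from the dissipation are combined. Since $T_A(t)$ is a Fourier multiplier it commutes with every $\Delta_j$, so by the definition of $\widetilde{L^r}(0,\infty;\dot{B}_{p,q}^s)$ it is enough to prove, with a constant independent of $j\in\mathbb{Z}$,
\begin{equation*}
  \|T_A(\cdot)\Delta_j f\|_{L^r(0,\infty;L^p)}\leqslant C\kappa^{-\frac{1}{\alpha}(1-\frac{2}{p})}|A|^{\frac{1}{\alpha}(1-\frac{2}{p})-\frac{1}{r}}\|\Delta_j f\|_{L^2};
\end{equation*}
multiplying by $2^{sj}$ and taking the $l^q$ norm in $j$ then yields \eqref{3.5}, and $s,q$ play no further role.

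Next I would record two elementary ingredients. First, the frequency-localized dispersive estimate $\|e^{-A\tau\mathcal{R}_1}\Delta_j g\|_{L^\infty}\lesssim 2^{2j}(1+|A\tau|)^{-1/2}\|\Delta_j g\|_{L^1}$ for $\tau\neq 0$: after rescaling the dyadic annulus $|\xi|\sim 2^j$ the phase $\xi_1/|\xi|$ becomes a function of the angular variable alone, with nondegenerate critical points, so this is stationary phase on the circle; it is the decay estimate of \cite{EW}, \cite{WC}, \cite{Takada2018}. Interpolating it with the trivial $L^2\to L^2$ bound (the symbol has modulus one) gives $\|e^{-A\tau\mathcal{R}_1}\Delta_j\|_{L^{p'}\to L^p}\lesssim 2^{2j(1-\frac{2}{p})}(1+|A\tau|)^{-\beta}$ with $\beta:=\frac{1}{2}(1-\frac{2}{p})\in(0,\frac12)$. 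Second, the dissipative smoothing $\|e^{-\kappa\tau(-\Delta)^{\alpha/2}}\Delta_j\|_{L^p\to L^p}\lesssim e^{-c\kappa\tau 2^{\alpha j}}$, a scaling argument for the kernel of the frequency-localized operator. Since $T_A(t)T_A(s)^{*}$ has Fourier symbol $e^{-\kappa(t+s)|\xi|^\alpha}e^{-A(t-s)i\xi_1/|\xi|}$ and $t+s\geqslant 0$ for $t,s\geqslant 0$, composing the two bounds gives, on the Fourier support of $\Delta_j^2$,
\begin{equation*}
  \|\Delta_j^2\,T_A(t)T_A(s)^{*}\|_{L^{p'}\to L^p}\lesssim 2^{2j(1-\frac{2}{p})}(1+|A(t-s)|)^{-\beta}\,e^{-c\kappa 2^{\alpha j}(t+s)}.
\end{equation*}

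By the $TT^{*}$ method, the block bound above is equivalent to the boundedness of $G\mapsto\int_0^\infty\Delta_j^2 T_A(t)T_A(s)^{*}G(s)\,ds$ from $L^{r'}(0,\infty;L^{p'})$ to $L^r(0,\infty;L^p)$, with operator norm equal to the square of the constant we want. By Minkowski's inequality this follows from the scalar statement that the integral operator on $(0,\infty)$ with kernel $k_j(t,s)=2^{2j(1-\frac{2}{p})}(1+|A(t-s)|)^{-\beta}e^{-\mu(t+s)}$, where $\mu:=c\kappa 2^{\alpha j}$, maps $L^{r'}$ to $L^r$ with norm $\lesssim\kappa^{-\frac{2}{\alpha}(1-\frac{2}{p})}|A|^{\frac{2}{\alpha}(1-\frac{2}{p})-\frac{2}{r}}$ --- in particular the powers of $2^j$ must cancel. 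I would factor this operator as multiplication by $e^{-\mu t}$, then convolution with $2^{2j(1-\frac{2}{p})}(1+|A\cdot|)^{-\beta}$, then multiplication by $e^{-\mu s}$, and chain Hölder's inequality (multiplication by $e^{-\mu\cdot}$ maps $L^a\to L^b$, $b<a$, with gain $\sim\mu^{-(1/b-1/a)}$) with Young's inequality (the kernel $(1+|A\cdot|)^{-\beta}$ lies in $L^q$ for every $q>1/\beta$ with norm $\sim|A|^{-1/q}$). Taking $q=1/\gamma$ with $\gamma:=\frac{2}{r}-\frac{2}{\alpha}(1-\frac{2}{p})$ and allotting the remaining smoothing budget $\frac{2}{r}-\gamma=\frac{2}{\alpha}(1-\frac{2}{p})$ to the two multiplication steps reproduces exactly the claimed powers of $\kappa$ and $|A|$ and cancels the prefactor $2^{2j(1-\frac{2}{p})}$ against $\mu^{-\frac{2}{\alpha}(1-\frac{2}{p})}$. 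The admissibility of this chain requires precisely $0\leqslant\gamma<\beta$: here $\gamma\geqslant 0$ is the left inequality in \eqref{3.4}, while $\gamma<\beta$, i.e.\ $\frac{2}{r}<(1-\frac{2}{p})(\frac{2}{\alpha}+\frac12)$, is the right one. The boundary case $\gamma=0$ (that is, $\frac{1}{r}=\frac{1}{\alpha}(1-\frac{2}{p})$, where Young's inequality would require $q=\infty$) is treated separately: one bounds $(1+|A(t-s)|)^{-\beta}\leqslant 1$, so $k_j$ is the rank-one kernel $2^{2j(1-\frac{2}{p})}e^{-\mu t}e^{-\mu s}$, whose operator has $L^{r'}\to L^r$ norm $\lesssim 2^{2j(1-\frac{2}{p})}\|e^{-\mu\cdot}\|_{L^r(0,\infty)}^2\sim 2^{2j(1-\frac{2}{p})}\mu^{-2/r}=C\kappa^{-2/r}$, using $\alpha/r=1-2/p$ at this endpoint.

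The step I expect to be the main obstacle is the scalar kernel estimate just described. The dispersive decay $(1+|A\tau|)^{-\beta}$ with $\beta<\frac12$ is on its own too weak to be time-integrable, hence too weak to close a Strichartz-type estimate through Young's inequality alone; the role of the exponential factor $e^{-\mu(t+s)}$ coming from the dissipation is to supply exactly the missing time integrability, and the range \eqref{3.4} is precisely the window in which the dissipative budget $\frac{2}{\alpha}(1-\frac{2}{p})$ is sufficient to do so. Everything else is routine: the dispersive estimate is already in the literature, the dissipative smoothing is a scaling argument, and the reductions (Littlewood--Paley decoupling, $TT^{*}$, Minkowski's inequality) are standard.
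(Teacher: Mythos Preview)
Your argument is correct and follows the same $TT^{*}$ strategy as the paper: reduce to a single dyadic block, combine the frequency-localized dispersive bound $\|e^{-A\tau\mathcal{R}_1}\Delta_j\|_{L^{p'}\to L^p}\lesssim 2^{2j(1-2/p)}(1+|A\tau|)^{-\beta}$ with the dissipative smoothing $e^{-c\kappa 2^{\alpha j}(t+s)}$, and close by a scalar kernel estimate in time.

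The only difference is in how that scalar estimate is carried out. The paper does not factor the operator as multiplication--convolution--multiplication; instead it converts the exponential factor into a power, $e^{-c\kappa 2^{\alpha j}(t+s)}\lesssim\{\kappa(t+s)\}^{-\frac{2}{\alpha}(1-\frac{2}{p})}2^{-2j(1-\frac{2}{p})}$, which simultaneously cancels the dyadic prefactor and, after the crude bound $t+s\geqslant|t-s|$, leaves the $j$-independent convolution kernel
\[
h^A(t)=(\kappa|t|)^{-\frac{2}{\alpha}(1-\frac{2}{p})}(1+|A||t|)^{-\beta}.
\]
Young's inequality with $h^A\in L^{r/2}(\mathbb{R})$ then handles the open range $\frac{1}{\alpha}(1-\frac{2}{p})<\frac{1}{r}<(\frac{1}{\alpha}+\frac{1}{4})(1-\frac{2}{p})$, while at the endpoint $\frac{1}{r}=\frac{1}{\alpha}(1-\frac{2}{p})$ the singularity $|t|^{-2/r}$ is treated by the Hardy--Littlewood--Sobolev inequality rather than by discarding the dispersive factor as you do. Your factorization (H\"older--Young--H\"older) and your rank-one endpoint argument are equally valid; the paper's reduction to a single convolution kernel is perhaps slightly more economical, and in particular recovers the endpoint without a separate case analysis in spirit (HLS being the natural substitute for Young when the kernel just misses $L^{r/2}$).
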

\begin{proof}
	The proof is based on the argument in Lemmas 2.2 and 3.3 in \cite{KLT}.
	It follows from Lemma 2.2 in \cite{Takada2018} that
	\begin{equation*}
		\|e^{-At\mathcal{R}_1}\widetilde{\Delta_0}g\|_{L^{\infty}}\leqslant C(1+|At|)^{-\frac{1}{2}}\|g\|_{L^1}
	\end{equation*}
	for all $A,t\in \mathbb{R}$ and $g\in L^1(\mathbb{R}^2)$. Here, we have put $\widetilde{\Delta_0}:=\Delta_{-1}+\Delta_0+\Delta_{1}$. The Plancherel theorem yields that
	\begin{equation}\label{3.6}
		\|e^{-At\mathcal{R}_1}\widetilde{\Delta_0}g\|_{L^2}\leqslant C\|g\|_{L^2}
	\end{equation}
	for all $A,t\in \mathbb{R}$ and $g\in L^2(\mathbb{R}^2)$. By (\ref{3.5}), (\ref{3.6}) and the Riesz-Thorin interpolation  theorem, it holds
	\begin{equation}\label{3.7}
		\|e^{-At\mathcal{R}_1}\widetilde{\Delta_0}g\|_{L^{p}}\leqslant C(1+|At|)^{-\frac{1}{2}\left(1-\frac{2}{p}\right)}\|g\|_{L^{p'}}
	\end{equation}
	for all $g\in L^{p'}(\mathbb{R}^2)$.
	On the other hand, the changing variable $\eta=2^{-j}\xi$ gives
	\begin{equation}\label{3.8}
		\begin{split}
			e^{-At\mathcal{R}_1}\Delta_jf(x)
			&=\frac{1}{(2\pi)^2}\int_{\mathbb{R}^2}e^{ix\cdot\xi}e^{-At\frac{i\xi_1}{|\xi|}}\widehat{\varphi_j}(\xi)\widehat{f}(\xi)\ d\xi\\
			&=\frac{1}{(2\pi)^2}\int_{\mathbb{R}^2}e^{i2^jx\cdot\eta}e^{-At\frac{i\eta_1}{|\eta|}}\widehat{\varphi_0}(\eta)\widehat{f}(2^j\eta)\ 2^{2j}d\eta\\
			&=\frac{1}{(2\pi)^2}\int_{\mathbb{R}^2}e^{i2^jx\cdot\eta}e^{-At\frac{i\eta_1}{|\eta|}}\widehat{\widetilde{\varphi_0}}(\eta)\widehat{\varphi_0}(\eta)\widehat{f(2^{-j}\cdot)}(\eta)\ d\eta\\
			&=e^{-At\mathcal{R}_1}\widetilde{\Delta_0}\Delta_0[f(2^{-j}\cdot)](2^jx),
		\end{split}
	\end{equation}
	where $\widetilde{\varphi_0}:=\varphi_{-1}+\varphi_0+\varphi_1$.
	(\ref{3.7}) and (\ref{3.8}) imply that
	\begin{equation}\label{3.9}
		\begin{split}
			\|e^{-At\mathcal{R}_1}\Delta_jf\|_{L^p}
			&=\|e^{-At\mathcal{R}_1}\widetilde{\Delta_0}\Delta_0[f(2^{-j}\cdot)](2^jx)\|_{L_x^p}\\
			&=2^{-\frac{2}{p}j}\|e^{-At\mathcal{R}_1}\widetilde{\Delta_0}\Delta_0[f(2^{-j}\cdot)]\|_{L^p}\\
			&\leqslant C2^{-\frac{2}{p}j}(1+|At|)^{-\frac{1}{2}\left(1-\frac{2}{p}\right)}\|\Delta_0[f(2^{-j}\cdot)]\|_{L^{p'}}\\
			&=C2^{2\left(1-\frac{2}{p}\right)j}(1+|At|)^{-\frac{1}{2}\left(1-\frac{2}{p}\right)}\|\Delta_jf\|_{L^{p'}}.
		\end{split}
	\end{equation}
	Next, let $\phi\in C_c^{\infty}((0,\infty)\times \mathbb{R}^2)$. Then, the Plancherel theorem and the Schwartz inequality yield that
	\begin{equation}\label{3.10}
		\begin{split}
			&\left|\int_0^{\infty}\int_{\mathbb{R}^2}T_A(t)\Delta_jf(x)\overline{\phi(t,x)}\ dxdt\right|\\
			&\quad \quad=\frac{1}{(2\pi)^2}\left|\int_0^{\infty}\int_{\mathbb{R}^2}\widehat{\varphi_j}(\xi)\widehat{f}(\xi)\overline{e^{-\kappa t|\xi|^{\alpha}}e^{At\frac{i\xi_1}{|\xi|}}\widehat{\widetilde{\varphi_j}}(\xi)\widehat{\phi}(t,\xi)}\ d\xi dt\right|\\
			&\quad \quad=\left|\int_0^{\infty}\int_{\mathbb{R}^2}\Delta_jf(x)\overline{e^{-\kappa t(-\Delta)^{\frac{\alpha}{2}}}e^{At\mathcal{R}_1}\widetilde{\Delta_j}\phi(t,x)}\ dxdt\right|\\
			&\quad \quad \leqslant \|\Delta_jf\|_{L^2}\left\|\int_0^{\infty}e^{-\kappa t(-\Delta)^{\frac{\alpha}{2}}}e^{At\mathcal{R}_1}\widetilde{\Delta_j}\phi(t)\ dt\right\|_{L^2},
		\end{split}
	\end{equation}
	where $\widetilde{\varphi_j}:=\varphi_{j-1}+\varphi_j+\varphi_{j+1}$, $\widetilde{\Delta_j}:=\Delta_{j-1}+\Delta_j+\Delta_{j+1}$ and we have used $\widehat{\widetilde{\varphi_j}}(\xi)\cdot\widehat{\varphi_j}(\xi)=\widehat{\varphi_j}(\xi)$.
	By the H\"{o}lder inequality, it holds
		\begin{equation}\label{3.11}
			\begin{split}
	  	&\left\|\int_0^{\infty}e^{-\kappa t(-\Delta)^{\frac{\alpha}{2}}}e^{At\mathcal{R}_1}\widetilde{\Delta_j}\phi(t)\ dt\right\|_{L^2}^2\\
			&\quad \quad= \int_0^{\infty}\int_0^{\infty}\int_{\mathbb{R}^2}e^{-\kappa t(-\Delta)^{\frac{\alpha}{2}}}e^{At\mathcal{R}_1}\widetilde{\Delta_j}\phi(t,x)\overline{e^{-\kappa \tau(-\Delta)^{\frac{\alpha}{2}}}e^{A\tau \mathcal{R}_1}\widetilde{\Delta_j}\phi(\tau,x)}\ dxd\tau dt\\
			&\quad \quad= \int_0^{\infty}\int_0^{\infty}\int_{\mathbb{R}^2}\widetilde{\Delta_j}\phi(t,x)\overline{e^{-\kappa (t+\tau)(-\Delta)^{\frac{\alpha}{2}}}e^{-A(t-\tau) \mathcal{R}_1}\widetilde{\Delta_j}\phi(\tau,x)}\ dxd\tau dt\\
			&\quad \quad\leqslant \int_0^{\infty}\|\widetilde{\Delta_j}\phi(t)\|_{L^{p'}}\int_0^{\infty}\left\|e^{-\kappa (t+\tau)(-\Delta)^{\frac{\alpha}{2}}}e^{-A(t-\tau) \mathcal{R}_1}\widetilde{\Delta_j}\phi(\tau)\right\|_{L^p}\ d\tau dt\\
			&\quad \quad\leqslant C\int_0^{\infty}\|\phi(t)\|_{L^{p'}}\int_0^{\infty}\left\|e^{-\kappa (t+\tau)(-\Delta)^{\frac{\alpha}{2}}}e^{-A(t-\tau) \mathcal{R}_1}\widetilde{\Delta_j}\phi(\tau)\right\|_{L^p}\ d\tau dt.
		\end{split}
	\end{equation}
	It follows from (\ref{3.9}) that
	\begin{equation}\label{3.11.1}
		\begin{split}
			&\left\|e^{-\kappa (t+\tau)(-\Delta)^{\frac{\alpha}{2}}}e^{-A(t-\tau) \mathcal{R}_1}\widetilde{\Delta_j}\phi(\tau)\right\|_{L^p}\\
			&\quad \quad \leqslant \sum_{k=-1}^1\left\|e^{-\kappa (t+\tau)(-\Delta)^{\frac{\alpha}{2}}}e^{-A(t-\tau) \mathcal{R}_1}\Delta_{j+k}\phi(\tau)\right\|_{L^p}\\
			&\quad \quad \leqslant C\sum_{k=-1}^1 e^{-C^{-1}2^{\alpha (j+k)}\kappa(t+\tau)}\|e^{-A(t-\tau)\mathcal{R}_1}\Delta_{j+k}\phi(\tau)\|_{L^p}\\
			&\quad \quad \leqslant C\sum_{k=-1}^12^{2\left(1-\frac{2}{p}\right)j}e^{-C^{-1}2^{\alpha (j+k)}\kappa(t+\tau)}(1+|A||t-\tau|)^{-\frac{1}{2}\left(1-\frac{2}{p}\right)}\|\Delta_{j+k}\phi(\tau)\|_{L^{p'}}\\
			&\quad \quad \leqslant C\{\kappa(t+\tau)\}^{-\frac{2}{\alpha}\left(1-\frac{2}{p}\right)}(1+|A||t-\tau|)^{-\frac{1}{2}\left(1-\frac{2}{p}\right)}\|\phi(\tau)\|_{L^{p'}}\\
			&\quad \quad \leqslant C(\kappa|t-\tau|)^{-\frac{2}{\alpha}\left(1-\frac{2}{p}\right)}(1+|A||t-\tau|)^{-\frac{1}{2}\left(1-\frac{2}{p}\right)}\|\phi(\tau)\|_{L^{p'}}.
 	 \end{split}
	\end{equation}
	Here, we have used the fact that
	\begin{equation*}
		\|e^{-t(-\Delta)^{\frac{\alpha}{2}}}\Delta_jg\|_{L^p}\leqslant Ce^{-C^{-1}2^{\alpha j}t}\|\Delta_j g\|_{L^p}
	\end{equation*}
	holds for all $t>0$, $1\leqslant p\leqslant \infty$,
	 $g\in \dot{B}_{p,\infty}^0(\mathbb{R}^2)$, $j\in \mathbb{Z}$ (see Proposition 2.2 in \cite{HK}).
	Combining (\ref{3.11}) and (\ref{3.11.1}), we have by the H\"{o}lder inequality that
	\begin{equation}\label{3.12}
		\begin{split}
			&\left\|\int_0^{\infty}e^{-\kappa t(-\Delta)^{\frac{\alpha}{2}}}e^{At\mathcal{R}_1}\widetilde{\Delta_j}\phi(t)\ dt\right\|_{L^2}^2\\
			&\quad \quad \leqslant C\|\phi\|_{L^{r'}(0,\infty;L^{p'})}\left\|\int_0^{\infty}h^A(t-\tau)\|\phi(\tau)\|_{L^{p'}}\ d\tau\right\|_{L_t^r(0,\infty)},
	  \end{split}
	\end{equation}
	where
	\begin{equation*}
		h^A(t):=(\kappa|t|)^{-\frac{2}{\alpha}\left(1-\frac{2}{p}\right)}(1+|A||t|)^{-\frac{1}{2}\left(1-\frac{2}{p}\right)},\quad t\in \mathbb{R}\setminus\{0\}.
	\end{equation*}
	In the case of $(1/\alpha)(1-2/p)<1/r<(1/\alpha+1/4)(1-2/p)$, since $h^A\in L^{2r}(\mathbb{R})$ and
	\begin{equation*}
		\|h^A\|_{L^{2r}(\mathbb{R})}=C\kappa^{-\frac{2}{\alpha}\left(1-\frac{2}{p}\right)}|A|^{\frac{2}{\alpha}\left(1-\frac{2}{p}\right)-\frac{2}{r}},
	\end{equation*}
	we obtain by the Hausdorff-Young inequality with $1+1/r=2/r+1/r'$ that
	\begin{equation}\label{3.13}
		\left\|\int_0^{\infty}h^A(t-\tau)\|\phi(\tau)\|_{L^{p'}}\ d\tau\right\|_{L_t^r(0,\infty)}
		\leqslant C\kappa^{-\frac{2}{\alpha}\left(1-\frac{2}{p}\right)}|A|^{\frac{2}{\alpha}\left(1-\frac{2}{p}\right)-\frac{2}{r}}\|\phi\|_{L^{r'}(0,\infty;L^{p'})}.
	\end{equation}
	In the case of $1/r=(1/\alpha)(1-2/p)$, it follows from the Hardy-Littlewood-Sobolev inequality with $1+1/r=2/r+1/r'$ that
	\begin{equation}\label{3.14}
		\begin{split}
			\left\|\int_0^{\infty}h^A(t-\tau)\|\phi(\tau)\|_{L^{p'}}\ d\tau\right\|_{L_t^r(0,\infty)}
			&\leqslant\kappa^{-\frac{2}{\alpha}\left(1-\frac{2}{p}\right)}\left\|\int_0^{\infty}\frac{1}{|t-\tau|^{\frac{2}{r}}}\|\phi(\tau)\|_{L^{p'}}\ d\tau\right\|_{L_t^r(0,\infty)}\\
			&\leqslant C\kappa^{-\frac{2}{\alpha}\left(1-\frac{2}{p}\right)}\|\phi\|_{L^{r'}(0,\infty;L^{p'})}.
		\end{split}
	\end{equation}
	By (\ref{3.12})-(\ref{3.14}), we see that
	\begin{equation}\label{3.15}
		\left\|\int_0^{\infty}e^{-\kappa t(-\Delta)^{\frac{\alpha}{2}}}e^{At\mathcal{R}_1}\widetilde{\Delta_j}\phi(t)\ dt\right\|_{L^2}
		\leqslant C\kappa^{-\frac{1}{\alpha}\left(1-\frac{2}{p}\right)}|A|^{\frac{1}{\alpha}\left(1-\frac{2}{p}\right)-\frac{1}{r}}\|\phi\|_{L^{r'}(0,\infty;L^{p'})}.
	\end{equation}
	Hence, combining (\ref{3.10}) and (\ref{3.15}), we have
	\begin{equation*}
		\left|\int_0^{\infty}\int_{\mathbb{R}^2}T_A(t)\Delta_jf(x)\overline{\phi(t,x)}\ dxdt\right|
		\leqslant C\kappa^{-\frac{1}{\alpha}\left(1-\frac{2}{p}\right)}|A|^{\frac{1}{\alpha}\left(1-\frac{2}{p}\right)-\frac{1}{r}}\|\Delta_jf\|_{L^2}\|\phi\|_{L^{r'}(0,\infty;L^{p'})},
	\end{equation*}
	which yields that
	\begin{equation}\label{3.16}
		\|T_A(\cdot)\Delta_jf\|_{L^r(0,\infty;L^p)}\leqslant C\kappa^{-\frac{1}{\alpha}\left(1-\frac{2}{p}\right)}|A|^{\frac{1}{\alpha}\left(1-\frac{2}{p}\right)-\frac{1}{r}}\|\Delta_jf\|_{L^2}.
	\end{equation}
	Multiplying (\ref{3.16}) by $2^{sj}$ and taking $l^q(\mathbb{Z})$-norm, we obtain (\ref{3.5}).
	\end{proof}
\section{Proof of Theorem \ref{1.1}}
In this section, we now prove Theorem \ref{1.1}.
\begin{proof}[Proof of Theorem \ref{t1-1}]
	Let $0<\alpha\leqslant 1$, $\kappa>0$ and let $p,s$ satisfy (\ref{1.3}).
	We also put $r=\alpha/(s-(1+2/p-\alpha))$.
	We note that these $p,s$ and $r$ satisfy the assumptions of indices for all inequalities that appear in this proof.
	To construct the solution, we consider the successive approximation $\{\theta^n\}_{n=0}^{\infty}$ defined by
	\begin{equation}\label{4.1}
		\begin{cases}
			\partial_t \theta^{n+1}+\kappa(-\Delta)^{\frac{\alpha}{2}}\theta^{n+1}+u^n\cdot \nabla\theta^{n+1}+A\mathcal{R}_1\theta^{n+1}=0 \quad  & \quad t>0, x\in \mathbb{R}^2,\\
			u^n=\mathcal{R}^{\perp}\theta^n=(-\mathcal{R}_2\theta^n,\mathcal{R}_1\theta^n) \quad & \quad t>0, x\in \mathbb{R}^2,\\
			\theta^{n+1}(0,x)=\theta_0(x) \quad & \quad x\in \mathbb{R}^2,\\
			\theta^0(t,x)=T_A(t)\theta_0(x) \quad & \quad t>0, x\in \mathbb{R}^2,
		\end{cases}
	\end{equation}
	for all $n\in \mathbb{N}\cup\{0\}$. Here, $T_A(t)=e^{-\kappa t(-\Delta)^{\frac{\alpha}{2}}}e^{-At\mathcal{R}_1}$ is the linear propagator defined in Section 3. Let $\widetilde{\theta}^A_N(t,x)=T_A(t)S_{N+3}\theta_0$ be the solution to the linear equation (\ref{3.1}) with the regularized initial data $S_{N+3}\theta_0$ for some $N\in \mathbb{Z}$ to be determined later.
	It follows from the continuous embedding $\dot{H}^{s+2(1/2-1/p)}(\mathbb{R}^2)\hookrightarrow \dot{B}_{p,2}^s(\mathbb{R}^2)$ that
	\begin{equation*}\label{4.1.1}
		\begin{split}
			\|\theta^{n+1}(t)\|_{\dot{B}_{p,2}^s}
			&\leqslant \|T_A(t)S_{N+3}\theta_0\|_{\dot{B}_{p,2}^s}+\|\theta^{n+1}(t)-\widetilde{\theta}^A_N(t)\|_{\dot{B}_{p,2}^s}\\
			&\leqslant C\|T_A(t)\theta_0\|_{\dot{B}_{p,2}^s}+C\|\theta^{n+1}(t)-\widetilde{\theta}^A_N(t)\|_{\dot{H}^{s+2(1/2-1/p)}},
		\end{split}
	\end{equation*}
	where $C$ does not depend on $N$.
	Taking $L^r(0,\infty)$-norm with respect to $t$, we have
	\begin{equation}\label{4.2}
			\|\theta^{n+1}\|_{L^r(0,\infty;\dot{B}_{p,2}^s)}
			\leqslant C\|T_A(\cdot)\theta_0\|_{L^r(0,\infty;\dot{B}_{p,2}^s)}+C\|\theta^{n+1}(t)-\widetilde{\theta}^A_N(t)\|_{L^r(0,\infty;\dot{H}^{s+2(1/2-1/p)})}.
	\end{equation}
	We derive an estimate for the second term of the right hand side of (\ref{4.2}) by the energy method. We first see that $\theta^{n+1}-\widetilde{\theta}^A_N$ satisfies
	\begin{equation}\label{4.3}
		\begin{cases}
			\begin{split}
				&\partial_t(\theta^{n+1}-\widetilde{\theta}^A_N)+\kappa(-\Delta)^{\frac{\alpha}{2}}(\theta^{n+1}-\widetilde{\theta}^A_N)+A\mathcal{R}_1(\theta^{n+1}-\widetilde{\theta}^A_N)\\
				&\quad\quad\quad\quad\quad\quad\quad\quad\quad\quad+u^n\cdot \nabla(\theta^{n+1}-\widetilde{\theta}^A_N)+u^n\cdot\nabla \widetilde{\theta}^A_N=0,
			\end{split}\\
			(\theta^{n+1}-\widetilde{\theta}^A_N)(0,x)=(1-S_{N+3})\theta_0(x).
		\end{cases}
	\end{equation}
	Applying $\Delta_j$ to (\ref{4.3}), we obtain
	\begin{equation}\label{4.4}
		\begin{split}
			&\partial_t\Delta_j(\theta^{n+1}-\widetilde{\theta}^A_N)+\kappa(-\Delta)^{\frac{\alpha}{2}}\Delta_j(\theta^{n+1}-\widetilde{\theta}^A_N)+A\mathcal
			{R}_1\Delta_j(\theta^{n+1}-\widetilde{\theta}^A_N)\\
			&\quad\quad=-\Delta_j(u^n\cdot \nabla\widetilde{\theta}^A_N)+[u^n,\Delta_j]\cdot \nabla(\theta^{n+1}-\widetilde{\theta}^{A}_N)
			-u^n\cdot \nabla\Delta_j(\theta^{n+1}-\widetilde{\theta}^A_N).
		\end{split}
	\end{equation}
	 We take $L^2(\mathbb{R}^2)$-inner product of (\ref{4.4}) with $\Delta_j(\theta^{n+1}-\widetilde{\theta}^A_N)$. Then, since the divergence-free condition $\nabla\cdot u^n=0$ yields that
	 \begin{equation*}
	 		\left \langle u^n\cdot \nabla\Delta_j(\theta^{n+1}-\widetilde{\theta}^A_N),\Delta_j(\theta^{n+1}-\widetilde{\theta}^A_N) \right \rangle_{L^2}=0,
	 \end{equation*}
	 we have
	\begin{equation*}\label{4.5}
		\begin{split}
			&\frac{1}{2}\frac{d}{dt}\|\Delta_j(\theta^{n+1}-\widetilde{\theta}^A_N)\|^2_{L^2}
			+\kappa\left\langle(-\Delta)^{\frac{\alpha}{2}}\Delta_j(\theta^{n+1}-\widetilde{\theta}^A_N),\Delta_j(\theta^{n+1}-\widetilde{\theta}^A_N) \right\rangle_{L^2}\\
			&\quad=-\left\langle \Delta_j(u^n\cdot \nabla \widetilde{\theta}^A_N),\Delta_j(\theta^{n+1}-\widetilde{\theta}^A_N)\right\rangle_{L^2}\\
			&\quad\quad\quad+\left\langle[u^n,\Delta_j]\cdot\nabla(\theta^{n+1}-\widetilde{\theta}^A_N)),\Delta_j(\theta^{n+1}-\widetilde{\theta}^A_N)\right\rangle_{L^2}\\
			&\quad\leqslant\|\Delta_j(u^n\cdot \nabla\widetilde{\theta}^A_N)\|_{L^2}\|\Delta_j(\theta^{n+1}-\widetilde{\theta}^A_N)\|_{L^2}\\
			&\quad\quad\quad+\|[u^n,\Delta_j]\cdot\nabla(\theta^{n+1}-\widetilde{\theta}^A_N)\|_{L^2}\|\Delta_j(\theta^{n+1}-\widetilde{\theta}^A_N)\|_{L^2}.
		\end{split}
	\end{equation*}
	Here, we have used the fact that $\left\langle\mathcal{R}_1f, f\right\rangle_{L^2}=0$ holds for all $f\in L^2(\mathbb{R}^2)$.
	Since there holds
	\begin{equation*}\label{4.6}
		\begin{split}
			\kappa\left\langle(-\Delta)^{\frac{\alpha}{2}}\Delta_j(\theta^{n+1}-\widetilde{\theta}^A_N),\Delta_j(\theta^{n+1}-\widetilde{\theta}^A_N) \right\rangle_{L^2}
			&=\kappa\|(-\Delta)^{\frac{\alpha}{4}}\Delta_j(\theta^{n+1}-\widetilde{\theta}^A_N)\|^2_{L^2}\\
			&\geqslant \lambda \kappa 2^{\alpha j}\|\Delta_j(\theta^{n+1}-\widetilde{\theta}^A_N)\|^2_{L^2}
		\end{split}
	\end{equation*}
	for some $\lambda=\lambda(\alpha)>0$, we see that
	\begin{equation}\label{4.7}
		\begin{split}
			&\frac{d}{dt}\|\Delta_j(\theta^{n+1}-\widetilde{\theta}^A_N)\|_{L^2}
			+\lambda \kappa 2^{\alpha j}\|\Delta_j(\theta^{n+1}-\widetilde{\theta}^A_N)\|_{L^2}\\
			&\quad\quad\leqslant\|\Delta_j(u^n\cdot \nabla\widetilde{\theta}^A_N)\|_{L^2}
			+\|[u^n,\Delta_j]\cdot\nabla(\theta^{n+1}-\widetilde{\theta}^A_N)\|_{L^2}.
		\end{split}
	\end{equation}
	By (\ref{4.7}), we obtain
	\begin{equation}\label{4.8}
		\begin{split}
			\|\Delta_j(\theta^{n+1}(t)-\widetilde{\theta}^A_{N}(t))\|_{L^2}
			&\leqslant e^{-\lambda\kappa2^{\alpha j}t}\|\Delta_j(1-S_{N+3})\theta_0\|_{L^2}\\
			&\quad+\int_0^te^{-\lambda\kappa2^{\alpha j}(t-\tau)}\|\Delta_j(u^n(\tau)\cdot \nabla\widetilde{\theta}^A_N(\tau))\|_{L^2}\ d\tau\\
			&\quad+\int_0^te^{-\lambda\kappa2^{\alpha j}(t-\tau)}\|[u^n(\tau),\Delta_j]\cdot\nabla(\theta^{n+1}(\tau)-\widetilde{\theta}^A_N(\tau))\|_{L^2}\ d\tau.
		\end{split}
	\end{equation}
	Multiplying (\ref{4.8}) by $2^{\left(s+2\left(\frac{1}{2}-\frac{1}{p}\right)\right)j}$ and taking $l^2(\mathbb{Z})$-norm, we obtain that by the Minkowski inequality and $\Delta_j(1-S_{N+3})\theta_0=0$ for $j\leqslant N-1$
	\begin{equation*}\label{4.9}
		\begin{split}
			&\|\theta^{n+1}(t)-\widetilde{\theta}^A_N(t)\|_{\dot{H}^{s+2(1/2-1/p)}}\\
			&\quad\leqslant \left\{\sum_{j\geqslant N}\left(2^{\left(s+2\left(\frac{1}{2}-\frac{1}{p}\right)\right)j}e^{-\lambda\kappa2^{\alpha j}t}\|\Delta_j(1-S_{N+3})\theta_0\|_{L^2}\right)^2\right\}^{\frac{1}{2}}\\
			&\quad\quad+\int_0^t\left\{\sum_{j\in \mathbb{Z}}\left(2^{\left(s+2\left(\frac{1}{2}-\frac{1}{p}\right)\right)j}e^{-\lambda\kappa2^{\alpha j}(t-\tau)}\|\Delta_j(u^n(\tau)\cdot \nabla\widetilde{\theta}^A_N(\tau))\|_{L^2}\right)^2\right\}^{\frac{1}{2}}d\tau\\
			&\quad\quad+\int_0^t\left\{\sum_{j\in \mathbb{Z}}\left(2^{\left(s+2\left(\frac{1}{2}-\frac{1}{p}\right)\right)j}e^{-\lambda\kappa2^{\alpha j}(t-\tau)}\|[u^n(\tau),\Delta_j]\cdot\nabla(\theta^{n+1}(\tau)-\widetilde{\theta}^A_N(\tau))\|_{L^2}\right)^2\right\}^{\frac{1}{2}}d\tau.
		\end{split}
	\end{equation*}
	By $2^{\left(s+2\left(\frac{1}{2}-\frac{1}{p}\right)\right)j}e^{-\lambda\kappa2^{\alpha j}(t-\tau)}\leqslant C\kappa^{-\frac{1}{\alpha}\left(1+\frac{2}{p}-s\right)}(t-\tau)^{-\frac{1}{\alpha}\left(1+\frac{2}{p}-s\right)}2^{\left(2s-\frac{4}{p}\right)j}$, it holds
	\begin{equation}\label{4.10}
		\begin{split}
			&\|\theta^{n+1}(t)-\widetilde{\theta}^A_N(t)\|_{\dot{H}^{s+2(1/2-1/p)}}\\
			&\quad\leqslant \left\{\sum_{j\geqslant N}\left(2^{\left(s+2\left(\frac{1}{2}-\frac{1}{p}\right)\right)j}e^{-\lambda\kappa2^{\alpha j}t}\|\Delta_j(1-S_{N+3})\theta_0\|_{L^2}\right)^2\right\}^{\frac{1}{2}}\\
			&\quad\quad+C\kappa^{-\frac{1}{\alpha}\left(1+\frac{2}{p}-s\right)}\int_0^t(t-\tau)^{-\frac{1}{\alpha}\left(1+\frac{2}{p}-s\right)}\|u^n(\tau)\cdot \nabla\widetilde{\theta}^A_N(\tau)\|_{\dot{H}^{2s-4/p}}\ d\tau\\
			&\quad\quad+C\kappa^{-\frac{1}{\alpha}\left(1+\frac{2}{p}-s\right)}\int_0^t(t-\tau)^{-\frac{1}{\alpha}\left(1+\frac{2}{p}-s\right)}\\
			&\quad\quad\quad\quad\quad\quad\quad\quad\times\left\{\sum_{j\in \mathbb{Z}}\left(2^{\left(2s-\frac{4}{p}\right)j}\|[u^n(\tau),\Delta_j]\cdot\nabla(\theta^{n+1}(\tau)-\widetilde{\theta}^A_N(\tau))\|_{L^2}\right)^2\right\}^{\frac{1}{2}}d\tau.
		\end{split}
	\end{equation}
	It follows from Lemma \ref{t2-3} and the boundedness of the Riesz transforms on homogeneous Besov spaces that
	\begin{equation*}\label{4.11}
		\begin{split}
			&\|u^n(\tau)\cdot \nabla\widetilde{\theta}^A_N(\tau)\|_{\dot{H}^{2s-4/p}}\\
			&\quad\quad\leqslant C(\|u^n(\tau)\|_{\dot{B}_{p,2}^{s-1}}\|\widetilde{\theta}^A_N(\tau)\|_{\dot{B}_{p,2}^{s+1}}
			+\|u^n(\tau)\|_{\dot{B}_{p,2}^s}\|\widetilde{\theta}^A_N(\tau)\|_{\dot{B}_{p,2}^s})\\
			&\quad\quad\leqslant C(\|\theta^n(\tau)\|_{\dot{B}_{p,2}^{s-1}}\|\widetilde{\theta}^A_N(\tau)\|_{\dot{B}_{p,2}^{s+1}}+\|\theta^n(\tau)\|_{\dot{B}_{p,2}^s}\|\widetilde{\theta}^A_N(\tau)\|_{\dot{B}_{p,2}^s}).
		\end{split}
	\end{equation*}
	The $\dot{B}_{p,2}^{s+1}(\mathbb{R}^2)$-norm and the $\dot{B}_{p,2}^s(\mathbb{R}^2)$-norm of $\widetilde{\theta}^A_N(\tau)$ are estimated as follows:
	\begin{equation*}\label{4.11.1}
			\|\widetilde{\theta}^A_N(\tau)\|_{\dot{B}_{p,2}^{s+1}}
			\leqslant C2^N\left\{\sum_{j\leqslant N+1}\left(2^{sj}\|\Delta_jS_{N+3}T_A(\tau)\theta_0\|_{L^p}\right)^2 \right\}^{\frac{1}{2}}
			\leqslant C2^N\|T_A(\tau)\theta_0\|_{\dot{B}_{p,2}^s}
	\end{equation*}
	and
	\begin{equation*}\label{4.11.2}
				\|\widetilde{\theta}^A_N(\tau)\|_{\dot{B}_{p,2}^s}
				\leqslant C\left\{\sum_{j\leqslant N+1}\left(2^{sj}\|\Delta_jS_{N+3}T_A(\tau)\theta_0\|_{L^p}\right)^2 \right\}^{\frac{1}{2}}
				\leqslant C\|T_A(\tau)\theta_0\|_{\dot{B}_{p,2}^s}.
		\end{equation*}
	By Lemma \ref{t2-4} with $s_1=s$ and $s_2=s-1$, it also holds
	\begin{equation*}
		\begin{split}
			&\left\{\sum_{j\in \mathbb{Z}}\left(2^{\left(2s-\frac{4}{p}\right)j}\|[u^n(\tau),\Delta_j]\cdot\nabla(\theta^{n+1}(\tau)-\widetilde{\theta}^A_N(\tau))\|_{L^2}\right)^2\right\}^{\frac{1}{2}}\\
			&\quad\quad\leqslant C\|\theta^n(\tau)\|_{\dot{B}_{p,2}^s}\|\nabla(\theta^{n+1}(\tau)-\widetilde{\theta}^A_N(\tau))\|_{\dot{B}_{p,2}^{s-1}}\\
			&\quad\quad\leqslant C\|\theta^n(\tau)\|_{\dot{B}_{p,2}^s}\|\theta^{n+1}(\tau)-\widetilde{\theta}^A_N(\tau)\|_{\dot{B}_{p,2}^s}\\
			&\quad\quad\leqslant C(\|\theta^n(\tau)\|_{\dot{B}_{p,2}^s}\|\theta^{n+1}(\tau)\|_{\dot{B}_{p,2}^s}+\|\theta^n(\tau)\|_{\dot{B}_{p,2}^s}\|T_A(\tau)\theta_0\|_{\dot{B}_{p,2}^s}).
		\end{split}
	\end{equation*}
	Hence, we obtain that
	\begin{equation}\label{4.13}
		\begin{split}
			&\|\theta^{n+1}(t)-\widetilde{\theta}^A_N(t)\|_{\dot{H}^{s+2(1/2-1/p)}}\\
			&\quad\leqslant C\left\{\sum_{j\geqslant N}\left(2^{\left(s+2\left(\frac{1}{2}-\frac{1}{p}\right)\right)j}e^{-\lambda\kappa2^{\alpha j}t}\|\Delta_j(1-S_{N+3})\theta_0\|_{L^2}\right)^2\right\}^{\frac{1}{2}}\\
			&\quad\quad+C\kappa^{-\frac{1}{\alpha}\left(1+\frac{2}{p}-s\right)}2^N\int_0^t(t-\tau)^{-\frac{1}{\alpha}\left(1+\frac{2}{p}-s\right)}\|\theta^n(\tau)\|_{\dot{B}_{p,2}^{s-1}}\|T_A(\tau)\theta_0\|_{\dot{B}_{p,2}^s}\ d\tau\\
			&\quad\quad+C\kappa^{-\frac{1}{\alpha}\left(1+\frac{2}{p}-s\right)}\int_0^t(t-\tau)^{-\frac{1}{\alpha}\left(1+\frac{2}{p}-s\right)}\|\theta^n(\tau)\|_{\dot{B}_{p,2}^s}\|T_A(\tau)\theta_0\|_{\dot{B}_{p,2}^s}\ d\tau\\
			&\quad\quad+C\kappa^{-\frac{1}{\alpha}\left(1+\frac{2}{p}-s\right)}\int_0^t(t-\tau)^{-\frac{1}{\alpha}\left(1+\frac{2}{p}-s\right)}\|\theta^n(\tau)\|_{\dot{B}_{p,2}^s}\|\theta^{n+1}(\tau)\|_{\dot{B}_{p,2}^s}\ d\tau.
		\end{split}
	\end{equation}
	Next, we take $L^r_t(0,\infty)$-norm of (\ref{4.13}) with respect to $t$. Then, since $r$ satisfies
	\begin{equation*}\label{4.14}
		1+\frac{1}{r}=\frac{1}{\alpha}\left(1+\frac{2}{p}-s\right)+\frac{2}{r},\quad 2<r<\infty,
	\end{equation*}
	the Hardy-Littlewood-Sobolev inequality yields that
	\begin{equation*}\label{4.15}
		\begin{split}
			&\|\theta^{n+1}-\widetilde{\theta}^A_N\|_{L^r(0,\infty;\dot{H}^{s+2(1/2-1/p)})}\\
			&\quad\quad\leqslant C\left\|\left\{\sum_{j\geqslant N}\left(2^{\left(s+2\left(\frac{1}{2}-\frac{1}{p}\right)\right)j}e^{-\lambda\kappa2^{\alpha j}t}\|\Delta_j(1-S_{N+3})\theta_0\|_{L^2}\right)^2\right\}^{\frac{1}{2}}\right\|_{L^r_t(0,\infty)}\\
			&\quad\quad\quad+C\kappa^{-\frac{1}{\alpha}\left(1+\frac{2}{p}-s\right)}2^N\|\theta^n\|_{L^r(0,\infty;\dot{B}_{p,2}^{s-1})}\|T_A(\cdot)\theta_0\|_{L^r(0,\infty;\dot{B}_{p,2}^s)}\\
			&\quad\quad\quad+C\kappa^{-\frac{1}{\alpha}\left(1+\frac{2}{p}-s\right)}\|\theta^n\|_{L^r(0,\infty;\dot{B}_{p,2}^s)}\|T_A(\cdot)\theta_0\|_{L^r(0,\infty;\dot{B}_{p,2}^s)}\\
			&\quad\quad\quad+C\kappa^{-\frac{1}{\alpha}\left(1+\frac{2}{p}-s\right)}\|\theta^n\|_{L^r(0,\infty;\dot{B}_{p,2}^s)}\|\theta^{n+1}\|_{L^r(0,\infty;\dot{B}_{p,2}^s)}.
		\end{split}
	\end{equation*}
	By the Minkowski inequality, there holds
	\begin{equation*}\label{4.16}
		\begin{split}
			&\left\|\left\{\sum_{j\geqslant N}\left(2^{\left(s+2\left(\frac{1}{2}-\frac{1}{p}\right)\right)j}e^{-\lambda\kappa2^{\alpha j}t}\|\Delta_j(1-S_{N+3})\theta_0\|_{L^2}\right)^2\right\}^{\frac{1}{2}}\right\|_{L^r_t(0,\infty)}\\
			&\quad\quad\leqslant \left\{\sum_{j\geqslant N}\left(2^{\left(s+2\left(\frac{1}{2}-\frac{1}{p}\right)\right)j}\left\|e^{-\lambda\kappa2^{\alpha j}t}\right\|_{L^r_t(0,\infty)}\|\Delta_j(1-S_{N+3})\theta_0\|_{L^2}\right)^2\right\}^{\frac{1}{2}}\\
			&\quad\quad\leqslant C\kappa^{-\frac{1}{r}} \left\{\sum_{j\geqslant N}\left(2^{(2-\alpha-s)j}2^{sj}\|\Delta_j(1-S_{N+3})\theta_0\|_{L^2}\right)^2\right\}^{\frac{1}{2}}\\
			&\quad\quad\leqslant C\kappa^{-\frac{1}{r}}2^{(2-\alpha-s)N}\|(1-S_{N+3})\theta_0\|_{\dot{H}^s}.
		\end{split}
	\end{equation*}
	Thus, we have
	\begin{equation}\label{4.17}
		\begin{split}
			&\|\theta^{n+1}-\widetilde{\theta}^A_N\|_{L^r(0,\infty;\dot{H}^{s+2(1/2-1/p)})}\\
			&\quad\quad\leqslant C\kappa^{-\frac{1}{r}}2^{(2-\alpha-s)N}\|(1-S_{N+3})\theta_0\|_{\dot{H}^s}\\
			&\quad\quad\quad+C\kappa^{-\frac{1}{\alpha}\left(1+\frac{2}{p}-s\right)}2^N\|\theta^n\|_{L^r(0,\infty;\dot{B}_{p,2}^{s-1})}\|T_A(\cdot)\theta_0\|_{L^r(0,\infty;\dot{B}_{p,2}^s)}\\
			&\quad\quad\quad+C\kappa^{-\frac{1}{\alpha}\left(1+\frac{2}{p}-s\right)}\|\theta^n\|_{L^r(0,\infty;\dot{B}_{p,2}^s)}\|T_A(\cdot)\theta_0\|_{L^r(0,\infty;\dot{B}_{p,2}^s)}\\
			&\quad\quad\quad+C\kappa^{-\frac{1}{\alpha}\left(1+\frac{2}{p}-s\right)}\|\theta^n\|_{L^r(0,\infty;\dot{B}_{p,2}^s)}\|\theta^{n+1}\|_{L^r(0,\infty;\dot{B}_{p,2}^s)}.
		\end{split}
	\end{equation}
	Combining (\ref{4.2}) and (\ref{4.17}), we obtain that
	\begin{equation}\label{4.18}
		\begin{split}
				\|\theta^{n+1}\|_{L^r(0,\infty;\dot{B}_{p,2}^s)}
				&\leqslant C\|T_A(\cdot)\theta_0\|_{L^r(0,\infty;\dot{B}_{p,2}^s)}+C\kappa^{-\frac{1}{r}}2^{(2-\alpha-s)N}\|(1-S_{N+3})\theta_0\|_{\dot{H}^s}\\
				&\quad+C\kappa^{-\frac{1}{\alpha}\left(1+\frac{2}{p}-s\right)}2^N\|\theta^n\|_{L^r(0,\infty;\dot{B}_{p,2}^{s-1})}\|T_A(\cdot)\theta_0\|_{L^r(0,\infty;\dot{B}_{p,2}^s)}\\
				&\quad+C\kappa^{-\frac{1}{\alpha}\left(1+\frac{2}{p}-s\right)}\|\theta^n\|_{L^r(0,\infty;\dot{B}_{p,2}^s)}\|T_A(\cdot)\theta_0\|_{L^r(0,\infty;\dot{B}_{p,2}^s)}\\
				&\quad+C\kappa^{-\frac{1}{\alpha}\left(1+\frac{2}{p}-s\right)}\|\theta^n\|_{L^r(0,\infty;\dot{B}_{p,2}^s)}\|\theta^{n+1}\|_{L^r(0,\infty;\dot{B}_{p,2}^s)}.
		\end{split}
	\end{equation}
	Next, we consider the estimates of $\theta^{n+1}$ in $L^r(0,\infty;\dot{B}_{p,2}^{s-1}(\mathbb{R}^2))$.
	The similar computation as the derivation of (\ref{4.10}) yields that
	\begin{equation*}\label{4.19}
		\begin{split}
			&\|\theta^{n+1}(t)\|_{\dot{B}_{p,2}^{s-1}}\\
			&\quad\leqslant \|T_A(t)\theta_0\|_{\dot{B}_{p,2}^{s-1}}+C\|\theta^{n+1}(t)-T_A(t)\theta_0\|_{\dot{H}^{s-1+2(1/2-1/p)}}\\
			&\quad\leqslant \|T_A(t)\theta_0\|_{\dot{B}_{p,2}^{s-1}}\\
			&\quad\quad+C\kappa^{-\frac{1}{\alpha}\left(1+\frac{2}{p}-s\right)}\int_0^t(t-\tau)^{-\frac{1}{\alpha}\left(1+\frac{2}{p}-s\right)}\|u^n(\tau)\cdot \nabla T_A(\tau)\theta_0\|_{\dot{H}^{2s-1-4/p}}\ d\tau\\
			&\quad\quad+C\kappa^{-\frac{1}{\alpha}\left(1+\frac{2}{p}-s\right)}\int_0^t(t-\tau)^{-\frac{1}{\alpha}\left(1+\frac{2}{p}-s\right)}\\
			&\quad\quad\quad\quad\quad\times\left\{\sum_{j\in \mathbb{Z}}\left(2^{\left(2s-1-\frac{4}{p}\right)j}\|[u^n(\tau),\Delta_j]\cdot\nabla(\theta^{n+1}(\tau)-T_A(\tau)\theta_0)\|_{L^2}\right)^2\right\}^{\frac{1}{2}}d\tau.
		\end{split}
	\end{equation*}
	Here, we have used $\theta^{n+1}(0,x)-T_A(0)\theta_0(x)=0$. Using Lemma \ref{t2-2} with $s_1=s_2=s-1$ and $q=2$, we see that
	\begin{equation*}\label{4.19.1}
		\begin{split}
			\|u^n(\tau)\cdot \nabla T_A(\tau)\theta_0\|_{\dot{H}^{2s-1-4/p}}
			&\leqslant C\|u^n(\tau)\|_{\dot{B}_{p,2}^{s-1}}\|\nabla T_A(\tau)\theta_0\|_{\dot{B}_{p,2}^{s-1}}\\
			&\leqslant C\|\theta^n(\tau)\|_{\dot{B}_{p,2}^{s-1}}\|T_A(\tau)\theta_0\|_{\dot{B}_{p,2}^s}.
		\end{split}
	\end{equation*}
	Lemma \ref{t2-4} with $s_1=s$ and $s_2=s-2$ yields that
	\begin{equation*}\label{4.20}
		\begin{split}
			&\left\{\sum_{j\in \mathbb{Z}}\left(2^{\left(2s-1-\frac{4}{p}\right)j}\|[u^n(\tau),\Delta_j]\cdot\nabla(\theta^{n+1}(\tau)-T_A(\tau)\theta_0)\|_{L^2}\right)^2\right\}^{\frac{1}{2}}\\
			&\quad\quad \leqslant C\|u^n(\tau)\|_{\dot{B}_{p,2}^s}\|\nabla(\theta^{n+1}(\tau)-T_A(\tau)\theta_0)\|_{\dot{B}_{p,2}^{s-2}}\\
			&\quad\quad \leqslant C\|u^n(\tau)\|_{\dot{B}_{p,2}^s}\|\theta^{n+1}(\tau)-T_A(\tau)\theta_0\|_{\dot{B}_{p,2}^{s-1}}\\
			&\quad\quad \leqslant C(\|\theta^n(\tau)\|_{\dot{B}_{p,2}^s}\|\theta^{n+1}(\tau)\|_{\dot{B}_{p,2}^{s-1}}+\|\theta^n(\tau)\|_{\dot{B}_{p,2}^s}\|T_A(\tau)\theta_0\|_{\dot{B}_{p,2}^{s-1}}).
		\end{split}
	\end{equation*}
	Thus, it holds that
	\begin{equation}\label{4.21}
		\begin{split}
			\|\theta^{n+1}(t)\|_{\dot{B}_{p,2}^{s-1}}
			&\leqslant \|T_A(t)\theta_0\|_{\dot{B}_{p,2}^{s-1}}\\
			&\quad +C\kappa^{-\frac{1}{\alpha}\left(1+\frac{2}{p}-s\right)}\int_0^t(t-\tau)^{-\frac{1}{\alpha}\left(1+\frac{2}{p}-s\right)}\|\theta^n(\tau)\|_{\dot{B}_{p,2}^{s-1}}\|T_A(\tau)\theta_0\|_{\dot{B}_{p,2}^s}\ d\tau\\
			&\quad+C\kappa^{-\frac{1}{\alpha}\left(1+\frac{2}{p}-s\right)}\int_0^t(t-\tau)^{-\frac{1}{\alpha}\left(1+\frac{2}{p}-s\right)}\|\theta^n(\tau)\|_{\dot{B}_{p,2}^s}\|T_A(\tau)\theta_0\|_{\dot{B}_{p,2}^{s-1}}d\tau\\
			&\quad+C\kappa^{-\frac{1}{\alpha}\left(1+\frac{2}{p}-s\right)}\int_0^t(t-\tau)^{-\frac{1}{\alpha}\left(1+\frac{2}{p}-s\right)}\|\theta^n(\tau)\|_{\dot{B}_{p,2}^s}\|\theta^{n+1}(\tau)\|_{\dot{B}_{p,2}^{s-1}}d\tau.
		\end{split}
	\end{equation}
	Taking $L^r(0,\infty)$-norm of (\ref{4.21}) and using the Hardy-Littlewood-Sobolev inequality, we have
	\begin{equation}\label{4.22}
		\begin{split}
			\|\theta^{n+1}(t)\|_{L^r(0,\infty;\dot{B}_{p,2}^{s-1})}
			&\leqslant \|T_A(\cdot)\theta_0\|_{L^r(0,\infty;\dot{B}_{p,2}^{s-1})}\\
			&\quad +C\kappa^{-\frac{1}{\alpha}\left(1+\frac{2}{p}-s\right)}
			\|\theta^n\|_{L^r(0,\infty;\dot{B}_{p,2}^{s-1})}\|T_A(\cdot)\theta_0\|_{L^r(0,\infty;\dot{B}_{p,2}^s)}\\
			&\quad+C\kappa^{-\frac{1}{\alpha}\left(1+\frac{2}{p}-s\right)}\|\theta^n\|_{L^r(0,\infty;\dot{B}_{p,2}^s)}\|T_A(\cdot)\theta_0\|_{L^r(0,\infty;\dot{B}_{p,2}^{s-1})}\\
			&\quad+C\kappa^{-\frac{1}{\alpha}\left(1+\frac{2}{p}-s\right)}\|\theta^n\|_{L^r(0,\infty;\dot{B}_{p,2}^s)}\|\theta^{n+1}\|_{L^r(0,\infty;\dot{B}_{p,2}^{s-1})}.
		\end{split}
	\end{equation}
	Combining (\ref{4.18}) and (\ref{4.22}), we obtain that
		\begin{equation}\label{4.23}
			\begin{split}
					\|\theta^{n+1}\|_X
					&\leqslant C\|T_A(\cdot)\theta_0\|_X+C\kappa^{-\frac{1}{r}}2^{(2-\alpha-s)N}\|(1-S_{N+3})\theta_0\|_{\dot{H}^s}\\
					&\quad+C\kappa^{-\frac{1}{\alpha}\left(1+\frac{2}{p}-s\right)}2^N\|\theta^n\|_X\|T_A(\cdot)\theta_0\|_{L^r(0,\infty;\dot{B}_{p,2}^s)}\\
					&\quad+C\kappa^{-\frac{1}{\alpha}\left(1+\frac{2}{p}-s\right)}\|\theta^n\|_X\|T_A(\cdot)\theta_0\|_X\\
					&\quad+C\kappa^{-\frac{1}{\alpha}\left(1+\frac{2}{p}-s\right)}\|\theta^n\|_X\|\theta^{n+1}\|_X,
			\end{split}
		\end{equation}
		where $\|F\|_X:=\|F\|_{L^r(0,\infty;\dot{B}_{p,2}^s)}+\|F\|_{L^r(0,\infty;\dot{B}_{p,2}^{s-1})}$.
		It follows from Proposition \ref{t3-1} that
		\begin{equation}\label{4.23.1}
			\|T_A(\cdot)\theta_0\|_{L^r(0,\infty:\dot{B}^{s-k}_{p,2})}
			\leqslant\|T_A(\cdot)\theta_0\|_{\widetilde{L^r}(0,\infty:\dot{B}^{s-k}_{p,2})}
			\leqslant C\kappa^{-\frac{1}{\alpha}\left(1-\frac{2}{p}\right)}|A|^{\frac{2-\alpha-s}{\alpha}}\|\theta_0\|_{\dot{H}^{s-k}},\ k=0,1.
		\end{equation}
		By (\ref{4.23.1}) and $\|(1-S_{N+3})\theta_0\|_{\dot{H}^s}\leqslant C\|\theta_0\|_{\dot{H}^s}$, we obtain
		\begin{equation*}\label{4.24}
			\begin{split}
					\|\theta^{n+1}\|_X
					&\leqslant C\kappa^{-\frac{1}{\alpha}\left(1-\frac{2}{p}\right)}|A|^{\frac{2-\alpha-s}{\alpha}}(\|\theta_0\|_{\dot{H}^s}+\|\theta_0\|_{\dot{H}^{s-1}})\\
					&\quad+C\kappa^{-\frac{1}{\alpha}\left(1-\frac{2}{p}\right)}\kappa^{\frac{2-\alpha-s}{\alpha}}2^{(2-\alpha-s)N}\|\theta_0\|_{\dot{H}^s}\\
					&\quad+C\kappa^{-\frac{1}{\alpha}\left(1-\frac{2}{p}\right)}2^N|A|^{\frac{2-\alpha-s}{\alpha}}\|\theta_0\|_{\dot{H}^s}\kappa^{-\frac{1}{\alpha}\left(1+\frac{2}{p}-s\right)}\|\theta^n\|_X\\
					&\quad+C\kappa^{-\frac{1}{\alpha}\left(1-\frac{2}{p}\right)}|A|^{\frac{2-\alpha-s}{\alpha}}(\|\theta_0\|_{\dot{H}^s}+\|\theta_0\|_{\dot{H}^{s-1}})\kappa^{-\frac{1}{\alpha}\left(1+\frac{2}{p}-s\right)}\|\theta^n\|_X\\
					&\quad+C\kappa^{-\frac{1}{\alpha}\left(1+\frac{2}{p}-s\right)}\|\theta^n\|_X\|\theta^{n+1}\|_X.
			\end{split}
		\end{equation*}
		Now, let $N\in \mathbb{Z}$ satisfy $2^N\leqslant \left(\kappa^{-1}|A| \right)^{\frac{1}{\alpha}\frac{s+\alpha-2}{s+\alpha-1}}<2^{N+1}$.
		Then, we see that
		\begin{equation}\label{4.26}
			\begin{split}
				\|\theta^{n+1}\|_X
				&\leqslant C_1\kappa^{-\frac{1}{\alpha}\left(1-\frac{2}{p}\right)}\left[\max\left\{|A|,\kappa^{\frac{1}{s+\alpha-1}}|A|^{\frac{s+\alpha-2}{s+\alpha-1}} \right\}^{\frac{2-\alpha-s}{\alpha}}\|\theta_0\|_{\dot{H}^s}\right.\\
				&\left.\quad\quad\quad\quad\quad\quad\quad\quad+|A|^{\frac{2-\alpha-s}{\alpha}}\|\theta_0\|_{\dot{H}^{s-1}} \right](1+\kappa^{-\frac{1}{\alpha}\left(1+\frac{2}{p}-s\right)}\|\theta^{n}\|_X)\\
				&\quad\quad+C_1\kappa^{-\frac{1}{\alpha}\left(1+\frac{2}{p}-s\right)}\|\theta^{n}\|_X\|\theta^{n+1}\|_X
			\end{split}
		\end{equation}
		holds for some $C_1=C_1(\alpha,p,s)>0$.
		If $\theta_0 \in H^s(\mathbb{R}^2)$ and $A\in \mathbb{R}\setminus\{0\}$ satisfy
		\begin{equation}\label{4.27}
			\begin{split}
				&\kappa^{-\frac{1}{\alpha}\left(1-\frac{2}{p}\right)}\left[\max\left\{|A|,\kappa^{\frac{1}{s+\alpha-1}}|A|^{\frac{s+\alpha-2}{s+\alpha-1}} \right\}^{\frac{2-\alpha-s}{\alpha}}\|\theta_0\|_{\dot{H}^s}\right.\\
				&\left.\quad\quad\quad\quad\quad\quad\quad\quad\quad\quad\quad+|A|^{\frac{2-\alpha-s}{\alpha}}\|\theta_0\|_{\dot{H}^{s-1}}\right]\leqslant \frac{1}{6C_1}\kappa^{\frac{1}{\alpha}(1+\frac{2}{p}-s)},
			\end{split}
		\end{equation}
		then it follows from (\ref{4.26}), (\ref{4.27}) and the inductive argument that
		\begin{equation}\label{4.27.1}
			\begin{split}
				\|\theta^{n}\|_X&\leqslant 3C_1\kappa^{-\frac{1}{\alpha}\left(1-\frac{2}{p}\right)}\left[\max\left\{|A|,\kappa^{\frac{1}{s+\alpha-1}}|A|^{\frac{s+\alpha-2}{s+\alpha-1}} \right\}^{\frac{2-\alpha-s}{\alpha}}\|\theta_0\|_{\dot{H}^s}\right.\\
				&\left.\quad\quad\quad\quad\quad\quad\quad\quad\quad\quad\quad\quad\quad\quad+|A|^{\frac{2-\alpha-s}{\alpha}}\|\theta_0\|_{\dot{H}^{s-1}} \right]
			\end{split}
		\end{equation}
		for all $n\in \mathbb{N}\cup\{0\}$.
		Next, we derive the uniform boundedness of $\{\theta^n\}_{n=0}^{\infty}$ in $L^{\infty}(0,\infty;L^2(\mathbb{R}^2))\cap\widetilde{L}^{\infty}(0,\infty;\dot{H}^s(\mathbb{R}^2))$ under the assumption (\ref{4.27}).
		Applying $\Delta_j$ to the first equation of (\ref{1.1}), we see that
		\begin{equation}\label{4.28}
			\partial_t\Delta_j\theta^{n+1}+\kappa(-\Delta)^{\frac{\alpha}{2}}\Delta_j\theta^{n+1}+A\mathcal{R}_1\Delta_j\theta^{n+1}
			=[u^n,\Delta_j]\cdot \nabla\theta^{n+1}-u^n\cdot \nabla\Delta_j\theta^{n+1}.
		\end{equation}
		Taking $L^2(\mathbb{R}^2)$-inner product of (\ref{4.28}) with $\Delta_j\theta^{n+1}$, we have
		\begin{equation}\label{4.29}
			\begin{split}
				\frac{1}{2}\frac{d}{dt}\|\Delta_j\theta^{n+1}\|_{L^2}^2+\lambda\kappa2^{\alpha j}\|\Delta_j\theta^{n+1}\|_{L^2}^2
				&\leqslant \|[u^n,\Delta_j]\cdot \nabla\theta^{n+1}\|_{L^2}\|\Delta_j\theta^{n+1}\|_{L^2}\\
				&\leqslant C\kappa^{-1}2^{-\alpha j}\|[u^n,\Delta_j]\cdot \nabla\theta^{n+1}\|_{L^2}^2\\
				&\quad\quad+\lambda \kappa 2^{\alpha j}\|\Delta_j\theta^{n+1}\|_{L^2}^2,
			\end{split}
		\end{equation}
		which yields that
		\begin{equation*}\label{4.30}
			\sup_{0\leqslant \tau\leqslant t}\|\Delta_j\theta^{n+1}(\tau)\|_{L^2}^2
			\leqslant \|\Delta_j\theta_0\|_{L^2}^2+C\kappa^{-1}2^{-\alpha j}\int_0^t\|[u^n,\Delta_j]\cdot \nabla\theta^{n+1}\|_{L^2}^2\ d\tau.
		\end{equation*}
		Multiplying this by $2^{2sj}$ and summing over $j\in \mathbb{Z}$, we see that
		\begin{equation}\label{4.31}
			\begin{split}
				\|\theta^{n+1}\|_{\widetilde{L}^{\infty}(0,t;\dot{H}^s)}^2
				&\leqslant \|\theta_0\|_{\dot{H}^s}^2\\
				&\quad+C\kappa^{-1}\int_0^t\sum_{j\in \mathbb{Z}}\left(2^{\left(\sigma+s-2\left(\frac{1}{2}-\frac{1}{p}\right)-1-2\left(\frac{2}{p}-\frac{1}{2}\right)\right)j}\right.\\
				&\left.\quad\quad\quad\quad\quad\quad\quad\quad\quad\times\|[u^n(\tau),\Delta_j]\cdot \nabla\theta^{n+1}(\tau)\|_{L^2} \right)^2 d\tau\\
				&\leqslant \|\theta_0\|_{\dot{H}^s}^2
				+C\kappa^{-1}\int_0^t\|\theta^n(\tau)\|_{\dot{B}_{p,2}^{\sigma}}^2\|\nabla\theta^{n+1}(\tau)\|_{\dot{B}_{p,2}^{s-1-2(1/2-1/p)}}^2 d\tau\\
				&\leqslant \|\theta_0\|_{\dot{H}^s}^2
				+C\kappa^{-1}\int_0^t\|\theta^n(\tau)\|_{\dot{B}_{p,2}^{\sigma}}^2\|\theta^{n+1}(\tau)\|_{\dot{H}^s}^2 d\tau\\
				&\leqslant \|\theta_0\|_{\dot{H}^s}^2
				+C\kappa^{-1}\int_0^t\|\theta^n(\tau)\|_{\dot{B}_{p,2}^{\sigma}}^2\|\theta^{n+1}\|_{\widetilde{L}^{\infty}(0,\tau;\dot{H}^s)}^2 d\tau,\\
			\end{split}
		\end{equation}
		where $\sigma:=1+2/p-\alpha/2$ and we have used Lemma \ref{t2-4} with $s_1=\sigma$ and $s_2=s-1-2(1/2-1/p)$. The Gronwall inequality yields that
		\begin{equation}\label{4.32}
			\|\theta^{n+1}\|_{\widetilde{L}^{\infty}(0,\infty;\dot{H}^s)}
			\leqslant \|\theta_0\|_{\dot{H}^s}{\rm exp}\left[C\kappa^{-1}\|\theta^n\|_{L^2(0,\infty; \dot{B}_{p,2}^{\sigma})}^2\right].
		\end{equation}
		On the other hand, it follows from the first line of (\ref{4.29}) that
		\begin{equation}
				\frac{d}{dt}\|\Delta_j\theta^{n+1}\|_{L^2}+\lambda\kappa2^{\alpha j}\|\Delta_j\theta^{n+1}\|_{L^2}
				\leqslant \|[u^n,\Delta_j]\cdot \nabla\theta^{n+1}\|_{L^2},
		\end{equation}
		which yields
		\begin{equation}\label{4.33}
				\|\Delta_j\theta^{n+1}(t)\|_{L^2}\leqslant e^{-\lambda2^{\alpha j}\kappa t}\|\Delta_j\theta_0\|_{L^2}+\int_0^te^{-\lambda2^{\alpha j}\kappa(t-\tau)}\|[u^n(\tau),\Delta_j]\cdot \nabla\theta^{n+1}(\tau)\|_{L^2}\ d\tau.
		\end{equation}
		Multiplying (\ref{4.33}) by $2^{\left(\sigma+2\left(\frac{1}{2}-\frac{1}{p}\right)\right)j}$ and taking $l^2(\mathbb{Z})$-norm , we have by Lemma \ref{t2-4} with $s_1=s$, $s_2=s-1$ that
		\begin{equation*}\label{4.34}
			\begin{split}
				&\|\theta^{n+1}(t)\|_{\dot{H}^{\sigma+2(1/2-1/p)}}\\
				&\quad \leqslant C\left\{\sum_{j\in \mathbb{Z}}\left(2^{\left(2-\frac{\alpha}{2}\right)j}e^{-\lambda\kappa t2^{\alpha j}}\|\Delta_j\theta_0\|_{L^2} \right)^2\right\}^{\frac{1}{2}}\\
				&\quad\quad + C\kappa^{-\beta}\int_0^t(t-\tau)^{-\beta}\left\{\sum_{j\in \mathbb{Z}}\left(2^{\left(2s-1-2\left(\frac{2}{p}-\frac{1}{2}\right)\right)j}\|[u^n(\tau),\Delta_j]\cdot \nabla\theta^{n+1}(\tau)\|_{L^2}\right)^2 \right\}^{\frac{1}{2}}\ d\tau\\
				&\quad \leqslant C\left\{\sum_{j\in \mathbb{Z}}\left(2^{(2-\frac{\alpha}{2})j}e^{-\lambda\kappa t2^{\alpha j}}\|\Delta_j\theta_0\|_{L^2} \right)^2\right\}^{\frac{1}{2}}\\
				&\quad\quad+C\kappa^{-\beta}\int_0^t(t-\tau)^{-\beta}\|\theta^n(\tau)\|_{\dot{B}_{p,2}^s}\|\theta^{n+1}(\tau)\|_{\dot{B}_{p,2}^s}d\tau,
			\end{split}
		\end{equation*}
		where we have used $2^{\left(\sigma+2\right(\frac{1}{2}-\frac{1}{p}\left)\right)j}e^{-\lambda\kappa t2^{\alpha j}}\leqslant C\kappa^{-\beta}(t-\tau)^{-\beta}2^{\left(2s-1-2\left(\frac{2}{p}-\frac{1}{2}\right)\right)j}$ with
		\begin{equation}\label{4.34.1}
			\beta:=\frac{1}{\alpha}\left\{2\left(1+\frac{2}{p}-s\right)-\frac{\alpha}{2}\right\}.
		\end{equation}
		Taking $L^2_t(0,\infty)$-norm and using the Hardy-Littlewood-Sobolev inequality with
		\begin{equation*}
			1+\frac{1}{2}=\beta+\frac{2}{r},\quad 0<\beta<1,
		\end{equation*}
		we obtain
		\begin{equation*}\label{4.35}
			\begin{split}
				\|\theta^{n+1}\|_{L^2(0,\infty;\dot{H}^{\sigma+2(1/2-1/p)})}
				\leqslant C\kappa^{-\frac{1}{2}}\|\theta_0\|_{\dot{H}^{2-\alpha}}+C\kappa^{-\beta}\|\theta^n\|_{L^r(0,\infty;\dot{B}_{p,2}^s)}\|\theta^{n+1}\|_{L^r(0,\infty;\dot{B}_{p,2}^s)}.
			\end{split}
		\end{equation*}
		Using this estimate and the continuous embedding $\dot{H}^{\sigma+2(1/2-1/p)}(\mathbb{R}^2)\hookrightarrow\dot{B}_{p,2}^{\sigma}(\mathbb{R}^2)$, we see that
		\begin{equation}\label{4.36}
			\|\theta^{n+1}\|_{L^2(0,\infty;\dot{B}_{p,2}^{\sigma})}\\
			\leqslant C\kappa^{-\frac{1}{2}}\|\theta_0\|_{\dot{H}^{2-\alpha}}+C\kappa^{-\beta}\|\theta^n\|_{L^r(0,\infty;\dot{B}_{p,2}^s)}\|\theta^{n+1}\|_{L^r(0,\infty;\dot{B}_{p,2}^s)}.
		\end{equation}
		Combining (\ref{4.32}) and (\ref{4.36}), we see that
		\begin{equation*}\label{4.37}
			\begin{split}
				&\|\theta^{n+1}\|_{\widetilde{L}^{\infty}(0,\infty;\dot{H}^s)}\\
				&\quad\leqslant \|\theta_0\|_{\dot{H}^s}
				{\rm exp}\left[C\kappa^{-1}\left(\kappa^{-\frac{1}{2}}\|\theta_0\|_{\dot{H}^{2-\alpha}}+\kappa^{-\beta}\|\theta^n\|_{L^r(0,\infty;\dot{B}_{p,2}^s)}\|\theta^{n+1}\|_{L^r(0,\infty;\dot{B}_{p,2}^s)}\right)^2\right]\\
				&\quad\leqslant \|\theta_0\|_{\dot{H}^s}
				{\rm exp}\left[C\kappa^{-1}\left(\kappa^{-\frac{1}{2}}\|\theta_0\|_{\dot{H}^{2-\alpha}}+\kappa^{-\beta}\sup_{n\in \mathbb{N}\cup\{0\}}\|\theta^n\|_X^2\right)^2\right]
			\end{split}
		\end{equation*}
		holds for all $n\in \mathbb{N}\cup\{0\}$. Hence, if the assumption (\ref{4.27}) is satisfied, then
		\begin{equation}\label{4.38}
			\sup_{n\in \mathbb{N}\cup\{0\}}\|\theta^{n+1}\|_{\widetilde{L}^{\infty}(0,\infty;\dot{H}^s)}<\infty.
		\end{equation}
		Taking $L^2(\mathbb{R}^2)$-inner product of the first equation in (\ref{4.1}) with $\theta^{n+1}$, we have
		\begin{equation*}
			\frac{1}{2}\frac{d}{dt}\|\theta^{n+1}\|^2_{L^2}\leqslant \frac{1}{2}\frac{d}{dt}\|\theta^{n+1}\|^2_{L^2}+\kappa\|(-\Delta)^{\frac{\alpha}{4}}\theta^{n+1}\|^2_{L^2}=
			-\left \langle u^n\cdot \nabla\theta^{n+1},\theta^{n+1} \right \rangle_{L^2}
			=0,
		\end{equation*}
		which yields that
		\begin{equation}\label{4.39}
			\|\theta^{n+1}\|_{L^{\infty}(0,\infty;L^2)}\leqslant \|\theta_0\|_{L^2}.
		\end{equation}
		for all $n\in \mathbb{N}\cup \{0\}$.
		Hence, we have the uniform boundedness of $\{\theta^n\}_{n=0}^{\infty}$ in $L^{\infty}(0,\infty;L^2(\mathbb{R}^2))\cap \widetilde{L}^{\infty}(0,\infty;\dot{H}^s(\mathbb{R}^2))$ under the assumption (\ref{4.27}).

		Next, we show that $\{\theta^n\}_{n=0}^{\infty}$ converges in $L^r(0,\infty;\dot{B}_{p,2}^{s-1}(\mathbb{R}^2))$. The difference $\theta^{n+1}-\theta^n$ satisfies
		\begin{equation}\label{4.40}
			\begin{cases}
				\begin{split}
					&\partial_t(\theta^{n+1}-\theta^n)+\kappa(-\Delta)^{\frac{\alpha}{2}}(\theta^{n+1}-\theta^{n})+A\mathcal{R}_1(\theta^{n+1}-\theta^n)\\
					&\quad\quad\quad\quad\quad\quad+u^n\cdot \nabla(\theta^{n+1}-\theta^n)+(u^n-u^{n-1})\cdot \nabla\theta^n=0
				\end{split}\\
				(\theta^{n+1}-\theta^{n})(0,x)=0
			\end{cases}
		\end{equation}
		for all $n\in \mathbb{N}\cup\{0,-1\}$. Here, we have put $\theta^{-1}=0$.
		Applying $\Delta_j$ to (\ref{4.40}), we have
		\begin{equation}\label{4.41}
			\begin{split}
				&\partial_t\Delta_j(\theta^{n+1}-\theta^n)+\kappa(-\Delta)^{\frac{\alpha}{2}}\Delta_j(\theta^{n+1}-\theta^{n})+A\mathcal{R}_1\Delta_j(\theta^{n+1}-\theta^n)\\
				&\quad=[u^n,\Delta_j]\cdot \nabla(\theta^{n+1}-\theta^n)-u_n\cdot \nabla\Delta_j(\theta^{n+1}-\theta^n)-\Delta_j((u^n-u^{n-1})\cdot \nabla\theta^n).
			\end{split}
		\end{equation}
		Taking $L^2(\mathbb{R}^2)$-inner product of (\ref{4.41}) with $\Delta_j(\theta^{n+1}-\theta^n)$, we have
		\begin{equation*}
			\begin{split}
				&\frac{d}{dt}\|\Delta_j(\theta^{n+1}-\theta^n)\|_{L^2}+\kappa \lambda 2^{\alpha j}\|\Delta_j(\theta^{n+1}-\theta^n)\|_{L^2}\\
				&\quad \leqslant \|[u^n,\Delta_j]\cdot \nabla(\theta^{n+1}-\theta^n)\|_{L^2}+\|\Delta_j((u^n-u^{n-1})\cdot \nabla\theta^n)\|_{L^2}.
			\end{split}
		\end{equation*}
		Then, by the similar argument as above, we obtain
		\begin{equation*}\label{4.42}
			\begin{split}
				&\|\theta^{n+1}(t)-\theta^n(t)\|_{\dot{B}_{p,2}^{s-1}}\\
				&\quad\leqslant C\|\theta^{n+1}(t)-\theta^n(t)\|_{\dot{H}^{s-1+2(1/2-1/p)}}\\
				&\quad\leqslant C\int_0^t\left\{\left(2^{\left(s-1+2\left(\frac{1}{2}-\frac{1}{p}\right)\right)j}e^{-\lambda2^{\alpha j}\kappa(t-\tau)}\|[u^n(\tau),\Delta_j]\cdot \nabla(\theta^{n+1}(\tau)-\theta^n(\tau))\|_{L^2}\right)^2\right\}^{\frac{1}{2}}d\tau\\
				&\quad\quad + C\int_0^t\left\{\left(2^{\left(s-1+2\left(\frac{1}{2}-\frac{1}{p}\right)\right)j}e^{-\lambda2^{\alpha j}\kappa(t-\tau)}\|\Delta_j((u^n(\tau)-u^{n-1}(\tau))\cdot \nabla\theta^n(\tau))\|_{L^2}\right)^2\right\}^{\frac{1}{2}}d\tau\\
				&\quad\leqslant C\kappa^{-\frac{1}{\alpha}\left(1+\frac{2}{p}-s\right)}\int_0^t(t-\tau)^{-\frac{1}{\alpha}\left(1+\frac{2}{p}-s\right)}\\
				&\quad\quad\quad\quad\quad\quad\quad\times\left\{\sum_{j\in \mathbb{Z}}\left(2^{\left(2s-2-2\left(\frac{2}{p}-\frac{1}{2}\right)\right)j}\|[u^n(\tau),\Delta_j]\cdot \nabla(\theta^{n+1}(\tau)-\theta^n(\tau))\|_{L^2}\right)^2\right\}^{\frac{1}{2}}d\tau\\
				&\quad\quad + C\kappa^{-\frac{1}{\alpha}\left(1+\frac{2}{p}-s\right)}\int_0^t(t-\tau)^{-\frac{1}{\alpha}\left(1+\frac{2}{p}-s\right)}\|(u^n(\tau)-u^{n-1}(\tau))\cdot \nabla\theta^n(\tau)\|_{\dot{H}^{2s-2-2(2/p-1/2)}}d\tau.
			\end{split}
		\end{equation*}
		It follows from Lemma \ref{t2-4} with $s_1=s$, $s_2=s-2$ and Lemma \ref{t2-2} with $s_1=s_2=s-1$ that
		\begin{equation*}\label{4.43}
			\begin{split}
				&\|\theta^{n+1}(t)-\theta^n(t)\|_{\dot{B}_{p,2}^{s-1}}\\
				&\quad\leqslant C\kappa^{-\frac{1}{\alpha}\left(1+\frac{2}{p}-s\right)}\int_0^t(t-\tau)^{-\frac{1}{\alpha}\left(1+\frac{2}{p}-s\right)}\|\theta^n(\tau)\|_{\dot{B}_{p,2}^{s}}\|\theta^{n+1}(\tau)-\theta^n(\tau)\|_{\dot{B}_{p,2}^{s-1}} d\tau\\
				&\quad\quad + C\kappa^{-\frac{1}{\alpha}\left(1+\frac{2}{p}-s\right)}\int_0^t(t-\tau)^{-\frac{1}{\alpha}\left(1+\frac{2}{p}-s\right)}\|\theta^n(\tau)-\theta^{n-1}(\tau)\|_{\dot{B}_{p,2}^{s-1}}\|\theta^n(\tau)\|_{\dot{B}_{p,2}^s}\ d\tau.
			\end{split}
		\end{equation*}
		Thus, the Hardy-Littlewood-Sobolev inequality yields that
		\begin{equation}\label{4.44}
			\begin{split}
				&\|\theta^{n+1}-\theta^n\|_{L^r(0,\infty;\dot{B}_{p,2}^{s-1})}\\
				&\quad \leqslant C_2\kappa^{-\frac{1}{\alpha}\left(1+\frac{2}{p}-s\right)}\|\theta^n\|_{L^r(0,\infty;\dot{B}_{p,2}^s)}\left(\|\theta^{n+1}-\theta^n\|_{L^r(0,\infty;\dot{B}_{p,2}^{s-1})}
				+\|\theta^n-\theta^{n-1}\|_{L^r(0,\infty;\dot{B}_{p,2}^{s-1})}\right)
			\end{split}
		\end{equation}
		holds for some $C_2=C_2(\alpha,p,s)>0$. Now, let us define $C_0=C_0(\alpha,p,s)>0$ by
		 \begin{equation}\label{4.45}
		 		C_0:=\min\left\{\frac{1}{12C_1},\frac{1}{18C_1C_2}\right\}.
		 \end{equation}
		 Also, let us choose $\theta_0\in H^s(\mathbb{R}^2)$ and $A\in \mathbb{R}\setminus\{0\}$ so that they satisfy the size condition (\ref{1.4}). Then, the condition (\ref{4.27}) follows. This implies that (\ref{4.27.1}), (\ref{4.38}) and (\ref{4.39}) also hold.
		 Hence, using (\ref{4.27.1}), (\ref{4.44}) and (\ref{4.45}), we have
		 \begin{equation*}\label{4.46}
		 		\|\theta^{n+1}-\theta^n\|_{L^r(0,\infty;\dot{B}_{p,2}^{s-1})}\leqslant \frac{1}{2}\|\theta^n-\theta^{n-1}\|_{L^r(0,\infty;\dot{B}_{p,2}^{s-1})}.
		 \end{equation*}
		 Therefore, we see that
		 \begin{equation*}\label{4.47}
			 \begin{split}
				 \|\theta^{n+1}-\theta^n\|_{L^r(0,\infty;\dot{B}_{p,2}^{s-1})}
				 &\leqslant 2^{-(n+1)}\|\theta^0\|_{L^r(0,\infty;\dot{B}_{p,2}^{s-1})}\\
				 &\leqslant C2^{-(n+1)}\kappa^{-\frac{1}{\alpha}\left(1-\frac{2}{p}\right)}|A|^{\frac{2-\alpha-s}{\alpha}}\|\theta_0\|_{\dot{H}^{s-1}},
			 \end{split}
		 \end{equation*}
		 which yields that $\{\theta^n\}_{n=0}^{\infty}$ is a Cauchy sequence in $L^r(0,\infty;\dot{B}_{p,2}^{s-1}(\mathbb{R}^2))$ and there exists a limit of $\{\theta^n\}_{n=0}^{\infty}$ in $L^r(0,\infty;\dot{B}_{p,2}^{s-1}(\mathbb{R}^2))$:
		 \begin{equation}\label{4.48}
		 		\theta:=\lim_{n\to \infty}\theta^n \quad {\rm in }\
				 L^r(0,\infty;\dot{B}_{p,2}^{s-1}(\mathbb{R}^2)).
		 \end{equation}
		 It is easy to check that $\theta$ satisfy (\ref{1.1}) and the uniform estimates (\ref{4.27.1}), (\ref{4.38}) and (\ref{4.39}) imply that
		 \begin{equation*}\label{4.49}
			 \begin{split}
				 \theta
				 &\in L^{\infty}(0,\infty;L^2(\mathbb{R}^2))\cap \widetilde{L}^{\infty}(0,\infty;\dot{H}^s(\mathbb{R}^2))\cap L^r(0,\infty;\dot{B}_{p,2}^s(\mathbb{R}^2))\\
				 &\subset L^{\infty}(0,\infty;H^s(\mathbb{R}^2))\cap L^r(0,\infty;\dot{B}_{p,2}^s(\mathbb{R}^2)).
			 \end{split}
		 \end{equation*}

		 Next, we show that $\theta$ belongs to $C([0,\infty);H^s(\mathbb{R}^2))$.
		 Since $\partial_t \Delta_j\theta=-\kappa(-\Delta)^{\frac{\alpha}{2}}\Delta_j\theta-A\mathcal{R}_1\Delta_j\theta-\Delta_j(u\cdot \nabla \theta)$ holds, we have by the continuous embedding $H^s(\mathbb{R}^2)\hookrightarrow L^{\infty}(\mathbb{R}^2)$ that
		 \begin{equation}\label{4.50}
		 		\begin{split}
		 			\|\partial_t\Delta_j\theta(t)\|_{\dot{H}^s}
					&\leqslant C\kappa2^{\alpha j}\|\Delta_j\theta(t)\|_{\dot{H}^s}+|A|\|\Delta_j\theta(t)\|_{\dot{H}^s}+C2^{sj}\|\Delta_j(u(t)\cdot \nabla\theta(t))\|_{L^2}\\
					&\leqslant C\kappa2^{\alpha j}\|\theta(t)\|_{H^s}+C|A|\|\theta(t)\|_{H^s}+C2^{sj}\|u(t)\|_{L^{\infty}}\|\nabla\theta(t)\|_{L^2}\\
					&\leqslant C\kappa2^{\alpha j}\|\theta(t)\|_{H^s}+C|A|\|\theta(t)\|_{H^s}+C2^{sj}\|u(t)\|_{H^s}\|\theta(t)\|_{H^s}\\
					&\leqslant C\kappa2^{\alpha j}\|\theta\|_{L^{\infty}(0,\infty;H^s)}+C|A|\|\theta\|_{L^{\infty}(0,\infty;H^s)}+C2^{sj}\|\theta\|_{L^{\infty}(0,\infty;H^s)}^2.
		 		\end{split}
		 \end{equation}
		 As it follows from (\ref{4.50}) that $\partial_t\Delta_j\theta\in L^1_{\rm loc}([0,\infty);\dot{H}^s(\mathbb{R}^2))$ for each $j\in \mathbb{Z}$, we find that
		 \begin{equation*}\label{4.51}
		 		\Theta_m:=\sum_{|j|\leqslant m} \Delta_j\theta \in C([0,\infty);\dot{H}^s(\mathbb{R}^2))
		 \end{equation*}
		 for all $m\in \mathbb{N}$.
		 We have by $\|\theta\|_{\widetilde{L}^{\infty}(0,\infty;\dot{H}^s)}^2<\infty$ that
		 \begin{equation*}\label{4.52}
		 	\|\Theta_m-\theta\|_{L^{\infty}(0,\infty;\dot{H}^s)}^2\leqslant \sum_{|j|\geqslant m+1}2^{2sj}\|\Delta_j\theta\|_{L^{\infty}(0,\infty;L^2)}^2\to 0\quad {\rm as}\ m\to \infty.
		\end{equation*}
		 Therefore, we see that $\theta \in C([0,\infty);\dot{H}^s(\mathbb{R}^2))$.
		 On the other hand, by $\partial_t \theta=-\kappa(-\Delta)^{\frac{\alpha}{2}}\theta-A\mathcal{R}_1\theta-u\cdot \nabla \theta$, we have
		 \begin{equation*}\label{4.53}
		 		\begin{split}
		 			\|\partial_t\theta(t)\|_{L^2}
					&\leqslant C\kappa\|\theta(t)\|_{\dot{H}^{\alpha}}+|A|\|\theta(t)\|_{L^2}+\|u(t)\cdot \nabla\theta(t)\|_{L^2}\\
					&\leqslant C\kappa\|\theta(t)\|_{H^s}+C|A|\|\theta(t)\|_{H^s}+\|u(t)\|_{L^{\infty}}\|\nabla\theta(t)\|_{L^2}\\
					&\leqslant C\kappa\|\theta\|_{L^{\infty}(0,\infty;H^s)}+C|A|\|\theta\|_{L^{\infty}(0,\infty;H^s)}+C\|\theta\|_{L^{\infty}(0,\infty;H^s)}^2,
		 		\end{split}
		 \end{equation*}
		 which yields $\partial_t \theta\in  L^1_{\rm loc}([0,\infty);L^2(\mathbb{R}^2))$. Thus, $\theta \in C([0,\infty);L^2(\mathbb{R}^2))$.
		 Hence, we obtain $\theta \in C([0,\infty);H^s(\mathbb{R}^2))$.

		 Finally, we prove the uniqueness of solutions to (\ref{1.1}) in the class (\ref{1.5}).
		 Let $\widetilde{\theta}$ be another solution to (\ref{1.1}) with the initial data $\theta_0$. Then $\theta-\widetilde{\theta}$ satisfies
		 \begin{equation}\label{4.54}
 		 	\begin{cases}
 		 		\begin{split}
					&\partial_t(\theta-\widetilde{\theta})+\kappa(-\Delta)^{\frac{\alpha}{2}}(\theta-\widetilde{\theta})+A\mathcal{R}_1(\theta-\widetilde{\theta})\\
					&\quad\quad\quad\quad+u\cdot \nabla(\theta-\widetilde{\theta})+(u-\widetilde{u})\cdot \widetilde{\theta}=0,
 		 		\end{split}\\
				\theta(0,x)-\widetilde{\theta}(0,x)=0,
 		 	\end{cases}
		 \end{equation}
		 where $\widetilde{u}:=\mathcal{R}^{\perp}\widetilde{\theta}$. Then, the similar energy calculation as above yields that
		 \begin{equation*}\label{4.55}
			 \begin{split}
				 &\|\theta(t)-\widetilde{\theta}(t)\|_{\dot{H}^{s-1}}\\
				 &\quad \leqslant \int_0^t \left\{ \sum_{j\in \mathbb{Z}}\left(2^{(s-1)j}e^{-\lambda2^{\alpha j}\kappa(t-\tau)}\|\Delta_j((u(\tau)-\widetilde{u}(\tau))\cdot \nabla\theta(\tau))\|_{L^2} \right)^2 \right\}^{\frac{1}{2}}\ d\tau\\
				 &\quad \quad + \int_0^t \left\{ \sum_{j\in \mathbb{Z}}\left(2^{(s-1)j}e^{-\lambda2^{\alpha j}\kappa(t-\tau)}\|[\widetilde{u}(\tau),\Delta_j]\cdot \nabla(\theta(\tau)-\widetilde{\theta}(\tau))\|_{L^2} \right)^2 \right\}^{\frac{1}{2}}\ d\tau\\
				 &\quad \leqslant C\kappa^{-\frac{1}{\alpha}\left(1+\frac{2}{p}-s\right)}\left[\int_0^t(t-\tau)^{-\frac{1}{\alpha}\left(1+\frac{2}{p}-s\right)}
				 \|(u(\tau)-\widetilde{u}(\tau))\cdot \nabla\theta(\tau)\|_{\dot{H}^{
				 2s-2-2/p
				 }} d\tau\right.\\
				 &\quad \quad+\left.\int_0^t(t-\tau)^{-\frac{1}{\alpha}\left(1+\frac{2}{p}-s\right)}
				 \left\{ \sum_{j\in \mathbb{Z}}\left(2^{
				 \left(
				 2s-2-\frac{2}{p}
				 \right)j
				 }\|[\widetilde{u}(\tau),\Delta_j]\cdot \nabla(\theta(\tau)-\widetilde{\theta}(\tau))\|_{L^2} \right)^2\right\}^{\frac{1}{2}}d\tau\right]\\
			 \end{split}
		 \end{equation*}
		 It follows from Lemma \ref{t2-2} with $s_1=s-1-2(1/2-1/p)$, $s_2=s-1$, Lemma \ref{t2-4} with $s_1=s$, $s_2=s-2-2(1/2-1/p)$
		 and the continuous embedding $\dot{H}^{s-1}(\mathbb{R}^2)\hookrightarrow \dot{B}_{p,2}^{s-1-2(1/2-1/p)}(\mathbb{R}^2)$ that
		 \begin{equation}\label{4.56}
			 \begin{split}
				 &\|\theta(t)-\widetilde{\theta}(t)\|_{\dot{H}^{s-1}}\\
				 &\quad \leqslant C\kappa^{-\frac{1}{\alpha}\left(1+\frac{2}{p}-s\right)}\int_0^t(t-\tau)^{-\frac{1}{\alpha}\left(1+\frac{2}{p}-s\right)}\|(\theta(\tau)-\widetilde{\theta}(\tau)\|_{\dot{H}^{s-1}}\|\theta(\tau)\|_{\dot{B}_{p,2}^{s}}\ d\tau\\
				 &\quad \quad +  C\kappa^{-\frac{1}{\alpha}\left(1+\frac{2}{p}-s\right)}\int_0^t(t-\tau)^{-\frac{1}{\alpha}\left(1+\frac{2}{p}-s\right)}\|\widetilde{\theta}(\tau)\|_{\dot{B}_{p,2}^s}\|\theta(\tau)-\widetilde{\theta}(\tau)\|_{\dot{H}^{s-1}}\ d\tau\\
			 \end{split}
		 \end{equation}
		 Let $T>0$. Taking $L^r_t(0,T)$-norm of (\ref{4.56}) and using the Hardy-Littlewood-Sobolev inequality, we have
		 \begin{equation}\label{4.57}
		 			\|\theta-\widetilde{\theta}\|_{L^r(0,T;\dot{H}^{s-1})}
					\leqslant C_3\kappa^{-\frac{1}{\alpha}\left(1+\frac{2}{p}-s\right)}\left(\|\theta\|_{L^r(0,T;\dot{B}_{p,2}^s)}+\|\widetilde{\theta}\|_{L^r(0,T;\dot{B}_{p,2}^s)}\right)\|\theta-\widetilde{\theta}\|_{L^r(0,T;\dot{H}^{s-1})}.
		 \end{equation}
		 for some $C_3=C_3(\alpha,p,s)>0$.
		 Since $\|\theta\|_{L^r(0,T;\dot{B}_{p,2}^s)}+\|\widetilde{\theta}\|_{L^r(0,T;\dot{B}_{p,2}^s)}\searrow 0$ as $T\searrow 0$, we see that
		 \begin{equation*}
		 		T_1:=\sup \left\{ 0<T<1\ ;\ C_3\kappa^{-\frac{1}{\alpha}\left(1+\frac{2}{p}-s\right)}\left(\|\theta\|_{L^r(0,T;\dot{B}_{p,2}^s)}+\|\widetilde{\theta}\|_{L^r(0,T;\dot{B}_{p,2}^s)}\right)\leqslant \frac{1}{2} \right\}
		 \end{equation*}
		 is a positive time. Then, it follows from (\ref{4.57}) that
		 \begin{equation*}
			 \|\theta-\widetilde{\theta}\|_{L^r(0,T_1;\dot{H}^{s-1})}
			 \leqslant \frac{1}{2}\|\theta-\widetilde{\theta}\|_{L^r(0,T_1;\dot{H}^{s-1})},
		 \end{equation*}
		 which yields $\theta=\widetilde{\theta}$ on the time interval $[0,T_1]$. If $T_1<\infty$, then applying the same argument to (\ref{1.1}) with $t\geqslant T_1$ and the initial data $\theta(T_1,x)=\widetilde{\theta}(T_1,x)$, we see that $\theta=\widetilde{\theta}$ on the time interval $[T_1,T_2]$, where
		 \begin{equation*}
		 		T_2:=\sup \left\{ T_1<T<2\ ;\ C_3\kappa^{-\frac{1}{\alpha}\left(1+\frac{2}{p}-s\right)}\left(\|\theta\|_{L^r(0,T;\dot{B}_{p,2}^s)}+\|\widetilde{\theta}\|_{L^r(0,T;\dot{B}_{p,2}^s)}\right)\leqslant \frac{1}{2} \right\}.
		 \end{equation*}
		 We continue this routine and define the sequence $\{T_k\}_{k=1}^{\infty}$ inductively as
		 \begin{equation*}
		 	T_k:=\sup \left\{ T_{k-1}<T<k\ ;\ C_3\kappa^{-\frac{1}{\alpha}\left(1+\frac{2}{p}-s\right)}\left(\|\theta\|_{L^r(0,T;\dot{B}_{p,2}^s)}+\|\widetilde{\theta}\|_{L^r(0,T;\dot{B}_{p,2}^s)}\right)\leqslant \frac{1}{2} \right\}.
		 \end{equation*}
		 Then, we see that $T_k<\infty$ for all $k\in \mathbb{N}$ and $T_k\to \infty$ as $k\to \infty$ by the definition of $\{T_k\}_{k=1}^{\infty}$. Hence, we obtain $\theta=\widetilde{\theta}$ on $[0,\infty)$.
		 This completes the proof of Theorem \ref{1.1}.
		 \end{proof}
		 \section{Proof of Theorem \ref{t1-4}}
		 In this section, we show Theorem \ref{t1-4}. First, we provide a lemma for the proof.
		 \begin{lemm}\label{t5-1}
		 		Let $K$ be a precompact set in $\dot{H}^{2-\alpha}(\mathbb{R}^2)\cap \dot{H}^{1-\alpha}(\mathbb{R}^2)$ and let $2<p<4/(2-\alpha)$. Then, the following two limit formulas hold:
				\begin{eqnarray}
						&\displaystyle\lim_{N\to\infty}\displaystyle\sup_{f\in K}\|(1-S_{N+3})f\|_{\dot{H}^{2-\alpha}}=0,\label{5.1}\\
						&\displaystyle\lim_{|A|\to \infty}\displaystyle\sup_{f\in K}\|T_A(\cdot)f\|_{L^{\rho}(0,\infty;\dot{B}_{p,2}^{k-\alpha})}=0,\label{5.2}
				\end{eqnarray}
				where $k=1,2$ and $\rho=\alpha/(1-2/p)$.
		 \end{lemm}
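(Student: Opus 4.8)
The plan is to derive both limit formulas from the same abstract mechanism: \emph{uniform boundedness of a family of linear operators, convergence on a dense class, and an $\varepsilon$-net argument using the precompactness of $K$}. For $(\ref{5.1})$ the operator family is $\{1-S_{N+3}\}_{N\in\mathbb{Z}}$ on $\dot{H}^{2-\alpha}(\mathbb{R}^2)$; for $(\ref{5.2})$ it is $\{T_A(\cdot)\}_{A\in\mathbb{R}\setminus\{0\}}$, viewed as maps $\dot{H}^{k-\alpha}(\mathbb{R}^2)\to L^{\rho}(0,\infty;\dot{B}_{p,2}^{k-\alpha}(\mathbb{R}^2))$.

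For $(\ref{5.1})$: since $(1-S_{N+3})f=\sum_{k\geqslant N+1}\Delta_kf$, Plancherel's theorem gives $\|(1-S_{N+3})f\|_{\dot{H}^{2-\alpha}}\leqslant\|f\|_{\dot{H}^{2-\alpha}}$ for every $N$, while for each fixed $f\in\dot{H}^{2-\alpha}(\mathbb{R}^2)$ the quantity $\|(1-S_{N+3})f\|_{\dot{H}^{2-\alpha}}^2=\sum_{k\geqslant N+1}2^{2(2-\alpha)k}\|\Delta_kf\|_{L^2}^2$ is the tail of a convergent series and hence tends to $0$ as $N\to\infty$. Given $\varepsilon>0$, cover the precompact set $K$ by finitely many balls of radius $\varepsilon$ (in $\dot{H}^{2-\alpha}$) centered at $f_1,\dots,f_m\in K$; then for any $f\in K$ and an appropriate $f_i$,
\[
\|(1-S_{N+3})f\|_{\dot{H}^{2-\alpha}}\leqslant\|f-f_i\|_{\dot{H}^{2-\alpha}}+\|(1-S_{N+3})f_i\|_{\dot{H}^{2-\alpha}}<\varepsilon+\max_{1\leqslant i\leqslant m}\|(1-S_{N+3})f_i\|_{\dot{H}^{2-\alpha}},
\]
and the maximum is $<\varepsilon$ once $N$ is large. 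This yields $(\ref{5.1})$.

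For $(\ref{5.2})$: first I would record the $A$-uniform bound. Because $\rho=\alpha/(1-2/p)$ we have $1/\rho=\tfrac1\alpha(1-\tfrac2p)$, so $(p,\rho)$ satisfies $(\ref{3.4})$ at its left endpoint and the power of $|A|$ in $(\ref{3.5})$ is $\tfrac1\alpha(1-\tfrac2p)-\tfrac1\rho=0$; moreover $8/(4-\alpha)<p<4/(2-\alpha)$ forces $2<\rho<4$, so the generalized Minkowski inequality gives $\|F\|_{L^{\rho}(0,\infty;\dot{B}_{p,2}^{s})}\leqslant\|F\|_{\widetilde{L^{\rho}}(0,\infty;\dot{B}_{p,2}^{s})}$. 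Combining these with Proposition \ref{t3-1} yields $\|T_A(\cdot)g\|_{L^{\rho}(0,\infty;\dot{B}_{p,2}^{k-\alpha})}\leqslant C\kappa^{-\frac1\alpha(1-\frac2p)}\|g\|_{\dot{H}^{k-\alpha}}$ with $C$ independent of $A$. Next, the convergence on a dense class: for $g$ with $\widehat{g}\in C_c^{\infty}(\mathbb{R}^2\setminus\{0\})$ only finitely many $\Delta_jg$ are nonzero and each lies in $\mathscr{S}(\mathbb{R}^2)$; combining the dissipative bound $\|e^{-\kappa t(-\Delta)^{\alpha/2}}\Delta_jh\|_{L^p}\leqslant Ce^{-C^{-1}2^{\alpha j}\kappa t}\|\Delta_jh\|_{L^p}$ with the dispersive bound $(\ref{3.9})$, exactly as in the proof of Proposition \ref{t3-1}, gives
\[
\|T_A(t)\Delta_jg\|_{L^p}\leqslant C2^{2(1-\frac2p)j}e^{-C^{-1}2^{\alpha j}\kappa t}(1+|A|t)^{-\frac12(1-\frac2p)}\|\Delta_jg\|_{L^{p'}}.
\]
Raising to the power $\rho$, integrating over $t$, and using $\tfrac\rho2(1-\tfrac2p)=\tfrac\alpha2<1$, the time integral is bounded by $\int_0^\infty e^{-at}(1+|A|t)^{-\alpha/2}\,dt\leqslant|A|^{-1}+C_a|A|^{-\alpha/2}$ for some $a=a(j,\kappa,p)>0$, which tends to $0$ as $|A|\to\infty$; summing the finitely many $j$ in $\ell^2$ gives $\|T_A(\cdot)g\|_{\widetilde{L^{\rho}}(0,\infty;\dot{B}_{p,2}^{k-\alpha})}\to0$, hence the same in $L^{\rho}(0,\infty;\dot{B}_{p,2}^{k-\alpha})$. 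Finally, $\{g:\widehat{g}\in C_c^{\infty}(\mathbb{R}^2\setminus\{0\})\}$ is dense in $\dot{H}^{2-\alpha}(\mathbb{R}^2)\cap\dot{H}^{1-\alpha}(\mathbb{R}^2)$ (truncate $\widehat{f}$ to a compact annulus away from the origin, then mollify in the Fourier variable), so the $\varepsilon$-net argument applies once more: writing $T_A(\cdot)f=T_A(\cdot)(f-g_i)+T_A(\cdot)g_i$, the first term is $\leqslant C\kappa^{-\frac1\alpha(1-\frac2p)}\|f-g_i\|_{\dot{H}^{k-\alpha}}$ uniformly in $A$, and the finitely many second terms are small for $|A|$ large. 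This proves $(\ref{5.2})$ for $k=1$ and $k=2$.

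The main obstacle is the convergence on a dense class in $(\ref{5.2})$: the $A$-uniform Strichartz estimate from Proposition \ref{t3-1} sits at the scaling-critical endpoint $1/\rho=\tfrac1\alpha(1-\tfrac2p)$ and therefore produces no decay in $|A|$ on its own, so one is forced to bring in the extra dissipative smoothing to reach the single-block estimate above. The $\|\Delta_jg\|_{L^{p'}}$ appearing there is exactly why the dense class must consist of functions whose Fourier transform is smooth and compactly supported away from the origin (so that $\Delta_jg\in\mathscr{S}$), rather than merely of functions with finitely many Littlewood--Paley blocks. The remaining steps are routine.
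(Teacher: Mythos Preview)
Your argument for (\ref{5.1}) is essentially identical to the paper's. For (\ref{5.2}) your proof is correct but follows a genuinely different route. The paper never reopens the block-level dispersive estimate; instead it chooses an auxiliary exponent $q$ with $\tfrac{1}{p}<\tfrac{1}{q}<\tfrac{1}{p}\cdot\tfrac{8+p}{8+2\alpha}$, so that the pair $(q,\rho)$ lies \emph{strictly} inside the admissible range (\ref{3.4}) of Proposition~\ref{t3-1} and the Strichartz estimate (\ref{3.5}) carries a genuinely negative power $|A|^{-\frac{2}{\alpha}(\frac{1}{q}-\frac{1}{p})}$. This is applied, via the embedding $\dot{B}_{q,2}^{k-\alpha+2(1/q-1/p)}\hookrightarrow\dot{B}_{p,2}^{k-\alpha}$, to the frequency-truncated approximants $f_{m,l}=\sum_{|j|\leqslant l}\Delta_jf_m$ of the net points, which do belong to the slightly stronger space $\dot{H}^{k-\alpha+2(1/q-1/p)}$; no explicit time integral is computed. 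Your approach instead extracts the $|A|$-decay at the single-block level by pairing the dissipative factor $e^{-at}$ with the dispersive factor $(1+|A|t)^{-\alpha/2}$ and estimating $\int_0^\infty e^{-at}(1+|A|t)^{-\alpha/2}\,dt$ directly. This is more hands-on and delivers an explicit rate, but it requires quoting the intermediate estimate (\ref{3.9}) from inside the proof of Proposition~\ref{t3-1}, whereas the paper's argument stays entirely at the level of the proposition. One small remark: you invoke $p>8/(4-\alpha)$ to force $\rho<4$, but the lemma only assumes $2<p<4/(2-\alpha)$; in fact the Minkowski step $\|F\|_{L^{\rho}\dot{B}_{p,2}^{s}}\leqslant\|F\|_{\widetilde{L^{\rho}}\dot{B}_{p,2}^{s}}$ needs only $\rho\geqslant 2$, and this already follows from $p<4/(2-\alpha)$, so your argument covers the full stated range of $p$.
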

		 \begin{proof}
			 This proof is based on the argument in \cite{IT}.
		 	Let $\delta>0$ be arbitrary and let
			\begin{equation*}
				B(f,\delta):=\left\{ g\in \dot{H}^{2-\alpha}(\mathbb{R}^2)\cap \dot{H}^{1-\alpha}(\mathbb{R}^2)\ ;\ \|g-f\|_{\dot{H}^{1-\alpha}}+\|g-f\|_{\dot{H}^{2-\alpha}}<\delta \right\}.
			\end{equation*}
			Since $\{B(f,\delta)\}_{f\in K}$ is an open covering of $\overline{K}$ in $\dot{H}^{2-\alpha}(\mathbb{R}^2)\cap \dot{H}^{1-\alpha}(\mathbb{R}^2)$ , it follows from the compactness of $K$ that there exist finitely many $f_1,...f_M\in K $ such that
			\begin{equation*}
				K\subset \bigcup_{m=1}^M B(f_m,\delta).
			\end{equation*}
			Then, we see that
			\begin{equation*}
				\begin{split}
					&\sup_{f\in K}\|(1-S_{N+3})f\|_{\dot{H}^{2-\alpha}}\\
					&\quad\leqslant \max_{m=1,...,M}\sup_{f\in B(f_m,\delta)}\left(C\|f-f_m\|_{\dot{H}^{2-\alpha}}+\|(1-S_{N+3})f_m\|_{\dot{H}^{2-\alpha}} \right)\\
					&\quad\leqslant C\delta+\max_{m=1,...,M}\|(1-S_{N+3})f_m\|_{\dot{H}^{2-\alpha}}\\
					&\quad\leqslant C\delta+C\max_{m=1,...,M}\left\{ \sum_{j\geqslant N}\left(2^{(2-\alpha)j}\|\Delta_jf_m\|_{L^2}^2\right)^2 \right\}^{\frac{1}{2}},\\
				\end{split}
			\end{equation*}
			which implies that
			\begin{equation*}
				\limsup_{N\to \infty}\sup_{f\in K}\|(1-S_{N+3})f\|_{\dot{H}^{2-\alpha}}\leqslant C\delta.
			\end{equation*}
			This completes the proof of (\ref{5.1}).
			Next, we prove (\ref{5.2}). For each $m=1,...,M$, we put
			\begin{equation*}
				f_{m,l}:=\sum_{|j|\leqslant l}\Delta_jf_m,\quad l\in \mathbb{N}.
			\end{equation*}
			Let $q$ satisfy
			\begin{equation*}
				\frac{1}{p}<\frac{1}{q}<\frac{1}{p}\cdot\frac{8+p}{8+2\alpha},\ 2<q<\infty.
			\end{equation*}
			Then, since
			\begin{equation*}
				\frac{1}{\alpha}\left(1-\frac{2}{q}\right)<\frac{1}{\rho}<\left(\frac{1}{\alpha}+\frac{1}{4}\right)\left(1-\frac{2}{q}\right),\ 2<\rho<\infty
			\end{equation*}
			holds, it follows from the continuous embedding $\dot{B}_{q,2}^{k-\alpha+2(1/q-1/p)}(\mathbb{R}^2)\hookrightarrow \dot{B}_{p,2}^{k-\alpha}(\mathbb{R}^2)$ and Proposition \ref{t3-1} that
			\begin{equation*}
				\begin{split}
					\|T_A(\cdot)f_m\|_{L^{\rho}(0,\infty;\dot{B}_{p,2}^{k-\alpha})}
					&\leqslant \|T_A(\cdot)(f_m-f_{m,l})\|_{L^{\rho}(0,\infty;\dot{B}_{p,2}^{k-\alpha})}+\|T_A(\cdot)f_{m,l}\|_{L^{\rho}(0,\infty;\dot{B}_{p,2}^{k-\alpha})}\\
					&\leqslant C\|f_m-f_{m,l}\|_{\dot{H}^{k-\alpha}}+C\|T_A(\cdot)f_{m,l}\|_{L^{\rho}(0,\infty;\dot{B}_{q,2}^{k-\alpha+2(1/q-1/p)})}\\
					&\leqslant C\|f_m-f_{m,l}\|_{\dot{H}^{k-\alpha}}+C|A|^{-\frac{2}{\alpha}\left(\frac{1}{q}-\frac{1}{p}\right)}\|f_{m,l}\|_{\dot{H}^{k-\alpha+2(1/q-1/p)}}\\
					&\leqslant C\|f_m-f_{m,l}\|_{\dot{H}^{k-\alpha}}+C2^{2\left(\frac{1}{q}-\frac{1}{p}\right)l}|A|^{-\frac{2}{\alpha}\left(\frac{1}{q}-\frac{1}{p}\right)}\|f_{m}\|_{\dot{H}^{k-\alpha}}.
				\end{split}
			\end{equation*}
			Therefore, we obtain
			\begin{equation*}
				\begin{split}
					&\sup_{f\in K}\|T_A(\cdot)f\|_{L^{\rho}(0,\infty;\dot{B}_{p,2}^{k-\alpha})}\\
					&\quad\leqslant \max_{m=1,...,M}\sup_{f\in B(f_m,\delta)}\|T_A(\cdot)(f-f_m)\|_{L^{\rho}(0,\infty;\dot{B}_{p,2}^{k-\alpha})}+\max_{m=1,...,M}\|T_A(\cdot)f_m\|_{L^{\rho}(0,\infty;\dot{B}_{p,2}^{k-\alpha})}\\
					&\quad\leqslant C\max_{m=1,...,M}\sup_{f\in B(f_m,\delta)}\|f-f_m\|_{\dot{H}^{k-\alpha}}+C\max_{m=1,...,M}\|T_A(\cdot)f_m\|_{L^{\rho}(0,\infty;\dot{B}_{p,2}^{k-\alpha})}\\
					&\quad\leqslant C\delta+C\max_{m=1,...,M}\left(\|f_m-f_{m,l}\|_{\dot{H}^{k-\alpha}}+2^{2\left(\frac{1}{q}-\frac{1}{p}\right)l}|A|^{-\frac{2}{\alpha}\left(\frac{1}{q}-\frac{1}{p}\right)}\|f_{m}\|_{\dot{H}^{k-\alpha+2(1/q-1/p)}}\right),
				\end{split}
			\end{equation*}
			which implies that
			\begin{equation*}
				\limsup_{|A|\to \infty}\sup_{f\in K}\|T_A(\cdot)f\|_{L^{\rho}(0,\infty;\dot{B}_{p,2}^{k-\alpha})}
				\leqslant C\delta+C\max_{m=1,...,M}\|f_m-f_{m,l}\|_{\dot{H}^{k-\alpha}}.
			\end{equation*}
			Letting $l\to \infty$ and $\delta\to 0$, we obtain (\ref{5.2}).
		 \end{proof}
		 \begin{proof}[Proof of Theorem \ref{t1-4}]
			 Let $\kappa$, $\alpha$, $p$, $\rho$ satisfy the asumptions in Theorem \ref{t1-4} and let $K\subset H^{2-\alpha}(\mathbb{R}^2)$ be precompact in $\dot{H}^{1-\alpha}(\mathbb{R}^2)\cap \dot{H}^{2-\alpha}(\mathbb{R}^2)$.
			 In this proof, we also consider the approximation sequence $\{\theta^n\}_{n=0}^{\infty}$ defined in section 4 (see (\ref{4.1})).
			 Since (\ref{4.23}) holds for $s=2-\alpha$ and $\theta_0\in H^{2-\alpha}(\mathbb{R}^2)$, we have
			 \begin{equation}\label{5.3}
				 \begin{split}
						 \|\theta^{n+1}\|_Y
						 &\leqslant C_4\|T_A(\cdot)\theta_0\|_Y+C_4\kappa^{-\frac{1}{\rho}}\|(1-S_{N+3})\theta_0\|_{\dot{H}^{2-\alpha}}\\
						 &\quad+C_4\kappa^{-\frac{1}{\alpha}\left(\frac{2}{p}+\alpha-1\right)}2^N\|\theta^n\|_Y\|T_A(\cdot)\theta_0\|_Y\\
						 &\quad+C_4\kappa^{-\frac{1}{\alpha}\left(\frac{2}{p}+\alpha-1\right)}\|\theta^n\|_Y\|T_A(\cdot)\theta_0\|_Y\\
						 &\quad+C_4\kappa^{-\frac{1}{\alpha}\left(\frac{2}{p}+\alpha-1\right)}\|\theta^n\|_Y\|\theta^{n+1}\|_Y
				 \end{split}
			 \end{equation}
			 for some $C_4=C_4(\alpha, p)>0$. Here, we have written $\|F\|_Y:=\|F\|_{L^{\rho}(0,\infty;\dot{B}_{p,2}^{2-\alpha})}+\|F\|_{L^{\rho}(0,\infty;\dot{B}_{p,2}^{1-\alpha})}$.
			 As (\ref{4.44}) also holds for $s=2-\alpha$, we have
			 \begin{equation}\label{5.4}
				 \begin{split}
					 &\|\theta^{n+1}-\theta^n\|_{L^{\rho}(0,\infty;\dot{B}_{p,2}^{1-\alpha})}\\
					 &\quad \leqslant C_5\kappa^{\frac{1}{\alpha}\left(1-\frac{2}{p} \right)}\|\theta^n\|_{L^{\rho}(0,\infty;\dot{B}_{p,2}^{2-\alpha})}\left(\|\theta^{n+1}-\theta^n\|_{L^{\rho}(0,\infty;\dot{B}_{p,2}^{1-\alpha})}
					 +\|\theta^n-\theta^{n-1}\|_{L^{\rho}(0,\infty;\dot{B}_{p,2}^{1-\alpha})}\right)
				 \end{split}
			 \end{equation}
			 for some $C_5=C_5(\alpha,p)>0$.
			 Let $R=R(\kappa,\alpha,p)>2$ satisfy
			 \begin{equation}\label{5.5}
			 		\frac{C_4}{\kappa C_5R}<\frac{1}{2}.
			 \end{equation}
			 Then, by (\ref{5.1}) of Lemma \ref{t5-1}, we can take a $N_0=N_0(\kappa,\alpha,p,K)\in \mathbb{N}$ so that
			 \begin{equation}\label{5.6}
			 	 \sup_{f\in K}\|(1-S_{N_0+3})f\|_{\dot{H}^{2-\alpha}}\leqslant \frac{1}{8C_4C_5R}.
			 \end{equation}
			 For this $N_0$, we use (\ref{5.2}) of Lemma \ref{t5-1} and choose $A_0=A_0(\kappa,\alpha,p,K)>0$ so that
			 \begin{equation}\label{5.7}
			 		\sup_{f\in K}\|T_A(\cdot)f\|_{Y}\leqslant \kappa^{-\frac{1}{\alpha}\left(1-\frac{2}{p}\right)}\min \left\{ \frac{1}{4C_4C_5R},\frac{\kappa}{2^{N_0+3}C_4}\right\}
			 \end{equation}
			holds for all $A\in \mathbb{R}$ with $|A|\geqslant A_0$.
			Let $\theta_0\in K$ and $A\in \mathbb{R}$ with $|A|\geqslant A_0$. Then, it follows from (\ref{5.3}), (\ref{5.6}) and (\ref{5.7}) that
			\begin{equation}\label{5.8}
				\|\theta^{n+1}\|_Y\leqslant \frac{\kappa^{-\frac{1}{\alpha}\left(1-\frac{2}{p}\right)}}{4RC_5}+\frac{1}{4}\|\theta^n\|_Y+(C_4\kappa^{-1+\frac{1}{\alpha}\left(1-\frac{2}{p}\right)}\|\theta^n\|_{Y})\|\theta^{n+1}\|_Y.
			\end{equation}
			Then, by (\ref{5.8}) and the inductive argument, we have
			\begin{equation}\label{5.8.1}
				\|\theta^n\|_Y\leqslant \frac{\kappa^{-\frac{1}{\alpha}\left(1-\frac{2}{p}\right)}}{C_5R}
			\end{equation}
			for all $n\in \mathbb{N}\cup\{0\}$. Using (\ref{5.8.1}) and (\ref{5.4}), we have
			\begin{equation*}
				\|\theta^{n+1}-\theta^n\|_{L^r(0,\infty;\dot{B}_{p,2}^{1-\alpha})}\leqslant \frac{1}{R-1}\|\theta^n-\theta^{n-1}\|_{L^r(0,\infty;\dot{B}_{p,2}^{1-\alpha})}.
			\end{equation*}
			Since $0<1/(R-1)<1$ holds, the approximation sequence $\{\theta^n\}_{n=0}^{\infty}$ converges in $L^{\rho}(0,\infty;\dot{B}_{p,2}^{1-\alpha}(\mathbb{R}^2))$.
			If we denote by $\theta$ the limit of $\{\theta^n\}_{n=0}^{\infty}$, then we find that $\theta$ is a solution to (\ref{1.1}) and the uniform estimate (\ref{5.8.1}) yields that $\theta$ belongs to $L^{\rho}(0,\infty;\dot{B}_{p,2}^{2-\alpha}(\mathbb{R}^2))$.
			Since the uniform boundedness (\ref{4.38}) and (\ref{4.39}) hold for $s=2-\alpha$, we also see that
			\begin{equation}\label{5.9}
				\theta \in L^{\infty}(0,\infty;L^2(\mathbb{R}^2))\cap \widetilde{L}^{\infty}(0,\infty;\dot{H}^{2-\alpha}(\mathbb{R}^2)).
			\end{equation}
			We note that the proof of (\ref{5.9}) breaks down if $p=8/(4-\alpha)$ since $\beta$ defined in (\ref{4.34.1}) equal to 1.
			Using (\ref{5.9}) and the similar argument as in Section 4, we can prove $\theta\in C([0,\infty);H^{2-\alpha}(\mathbb{R}^2))$.
			Here, in order to prove this, we need the condition $\alpha<1$ since we use the continuous embedding $H^{2-\alpha}(\mathbb{R}^2)\hookrightarrow L^{\infty}(\mathbb{R}^2)$.
			The uniqueness of solutions to (\ref{1.1}) is also proved by the same argument as in Section 4. This completes the proof.
		 \end{proof}
\noindent
{\bf Acknowledgements.} \\
The author would like to express his sincere gratitude to Professor Jun-ichi Segata, Faculty of Mathematics, Kyushu University, for many fruitful advices and continuous encouragement. He would also like to express sincere thanks to Professor Ryo Takada, Faculty of Mathematics, Kyushu University.

\begin{bibdiv}
\begin{biblist}
\bib{Bahouri}{book}{
	author={Bahouri, Hajer},
	author={Chemin, Jean-Yves},
	author={Danchin, Rapha\"{e}l},
	title={Fourier analysis and nonlinear partial differential equations},
	series={Grundlehren der Mathematischen Wissenschaften [Fundamental
	Principles of Mathematical Sciences]},
	volume={343},
	publisher={Springer, Heidelberg},
	date={2011},
	isbn={978-3-642-16829-1},
}
\bib{CC}{book}{
   author={Chen, Janwen},
   author={Chen, Zhin-Min},
	 title={Commutator estimate in terms of partial derivatives of solutions for the dissipative quasi-geostrophic equation},
	 journal={J. Math. Ana. Appl.},
   volume={444},
   date={2016},
   pages={755--767},
}\bib{CDGG}{book}{
   author={Chemin, J.-Y.},
   author={Desjardins, B.},
   author={Gallagher, I.},
   author={Grenier, E.},
   title={Mathematical geophysics},
   series={Oxford Lecture Series in Mathematics and its Applications},
   volume={32},
   note={An introduction to rotating fluids and the Navier-Stokes
   equations},
   publisher={The Clarendon Press, Oxford University Press, Oxford},
   date={2006},
   isbn={978-0-19-857133-9},
}
\bib{CF}{article}{
   author={Carrillo, Jos\'{e} A.},
   author={Ferreira, Lucas C. F.},
   title={The asymptotic behaviour of subcritical dissipative
   quasi-geostrophic equations},
   journal={Nonlinearity},
   volume={21},
   date={2008},
   pages={1001--1018},
}
\bib{CL}{article}{
   author={Chae, Dongho},
	 author={Lee, Jiohoon},
   title={Global well-posedness in the supercritical dissipative quasi-geostrophic equations},
   journal={Comm. Math. Phys.},
   volume={233},
   date={2003},
   pages={297--311},
}
\bib{CMX}{article}{
	author={Cannone, Marco},
	author={Miao, Changxing},
	author={Xue, Liutang},
	title={Global regularity for the supercritical dissipative
	quasi-geostrophic equation with large dispersive forcing},
	journal={Proc. Lond. Math. Soc. (3)},
	volume={106},
	date={2013},
	pages={650--674},
}
\bib{CMZ2}{article}{
   author={Chen, Qionglei},
   author={Miao, Changxing},
   author={Zhang, Zhifei},
   title={A new Bernstein's inequality and the 2D dissipative
   quasi-geostrophic equation},
   journal={Comm. Math. Phys.},
   volume={271},
   date={2007},
   issn={0010-3616},
}
\bib{CZ}{article}{
   author={Chen, Qionglei},
   author={Zhang, Zhifei},
   title={Global well--posedness of the 2D critical dissipative
   quasi-geostrophic equation in the Triebel-Lizorkin spaces},
   journal={Nonlinear Anal.},
   volume={67},
   date={2007},
   pages={1715--1725},
}
\bib{CW}{article}{
   author={Constantin, Peter},
   author={Wu, Jiahong},
   title={Behavior of solutions of 2D quasi-geostrophic equations},
   journal={SIAM J. Math. Anal.},
   volume={30},
   date={1999},
   pages={937--948},
}
\bib{EW}{article}{
   author={Elgindi, Tarek M.},
   author={Widmayer, Klaus},
   title={Sharp decay estimates for an anisotropic linear semigroup and
   applications to the surface quasi-geostrophic and inviscid Boussinesq
   systems},
   journal={SIAM J. Math. Anal.},
   volume={47},
   date={2015},
   pages={4672--4684},
	 }
	 \bib{Fujii}{article}{
	 	author={Fujii, Mikihiro},
	 	title={Long time existence and the asymptotic behavior of solutions for the 2D quasi--geostrophic equation with large dispersive forcing},
	 	journal={MI Preprint Series:MI 2019--1},
	 }
	 \bib{HK}{article}{
	    author={Hmidi, Taoufik},
	    author={Keraani, Sahbi},
	    title={Global solutions of the super-critical 2D quasi-geostrophic
	    equation in Besov spaces},
	    journal={Adv. Math.},
	    volume={214},
	    date={2007},
	    pages={618--638},
	 }
	 \bib{IT}{article}{
	    author={Iwabuchi, Tsukasa},
	    author={Takada, Ryo},
	    title={Global solutions for the Navier-Stokes equations in the rotational
	    framework},
	    journal={Math. Ann.},
	    volume={357},
	    date={2013},
	    pages={727--741},
	 }
	 \bib{KLT}{article}{
	    author={Koh, Youngwoo},
	    author={Lee, Sanghyuk},
	    author={Takada, Ryo},
	    title={Dispersive estimates for the Navier-Stokes equations in the
	    rotational framework},
	    journal={Adv. Differential Equations},
	    volume={19},
	    date={2014},
	    pages={857--878},
	 }
\bib{Miura}{article}{
   author={Miura, Hideyuki},
   title={Dissipative quasi-geostrophic equation for large initial data in
   the critical Sobolev space},
   journal={Comm. Math. Phys.},
   volume={267},
   date={2006},
   issn={0010-3616},
}
\bib{Takada2018}{article}{
	author={Takada, Ryo},
	title={Long time solutions for the 2D inviscid Boussinesq equations with strong stratification},
	journal={MI Preprint Series:MI 2018--5},
}
\bib{Wu1}{article}{
   author={Wu, Jiahong},
   title={Global solutions of the 2D dissipative quasi-geostrophic equation
   in Besov spaces},
   journal={SIAM J. Math. Anal.},
   volume={36},
   date={2004/05},
   pages={1014--1030},
}
\bib{WC}{article}{
	author={Wan, Renhui},
	author={Chen, Jiecheng},
	title={Global well--posedness for the 2D dispersive SQG equation and
	inviscid Boussinesq equations},
	journal={Z. Angew. Math. Phys.},
	volume={67},
	date={2016},
	pages={Art. 104, 22},
}
\bib{WC2017}{article}{
   author={Wan, Renhui},
   author={Chen, Jiecheng},
   title={Global well--posedness of smooth solution to the supercritical SQG
   equation with large dispersive forcing and small viscosity},
   journal={Nonlinear Anal.},
   volume={164},
   date={2017},
   pages={54--66},
}
\bib{Zhang}{article}{
   author={Zhang, Zhi-fei},
   title={Global well--posedness for the 2D critical dissipative
   quasi-geostrophic equation},
   journal={Sci. China Ser. A},
   volume={50},
   date={2007},
   pages={485--494},
}

\end{biblist}
\end{bibdiv}
\end{document}